\newcommand{\fff}{{\mathcal F}}
\newcommand{\lcal}{{\mathcal L}}
\newcommand{\ppp}{{\mathcal P}}
\newcommand{\rrr}{{\mathcal R}}
\newcommand{\ilim}{\varprojlim}
\newcommand{\poly}{\Delta}
\newcommand{\omegan}{\Omega^{(n)}}
\newcommand{\omeganm}{\Omega^{(n-1)}}
\newcommand{\ren}{{\mathcal T}}
\newcommand{\mmmn}{\mathcal{M}^{(n)}}
\newcommand{\mmmN}{\mathcal{M}^{(N)}}
\newcommand{\T}{{\bf T}}
\newcommand{\R}{{\mathbb R}}
\newcommand{\N}{{\mathbb N}}
\newcommand{\Z}{{\mathbb Z}}
\newcommand{\vecv}{{\vec{v}}}
\newcommand{\type}{{label}}
\newtheorem{thm}{Theorem}[section]
\newtheorem*{thm*}{Theorem}
\newtheorem{cor}[thm]{Corollary}
\newtheorem{lem}[thm]{Lemma}
\newtheorem{prop}[thm]{Proposition}
\newtheorem{dfn}[thm]{Definition}
\theoremstyle{definition}
\newtheorem{ex}{Example}
\begin{document}
\title{Fusion tilings with infinite local complexity}
\author{Natalie Priebe Frank and Lorenzo Sadun}
\date{\today}

\address{Natalie Priebe Frank\\Department of Mathematics\\Vassar
  College\\Poughkeepsie, NY 12604} \email{nafrank@vassar.edu}
\address{Lorenzo Sadun\\Department of Mathematics\\The University of
  Texas at Austin\\ Austin, TX 78712} \email{sadun@math.utexas.edu}
\thanks{The work of the second author is partially supported by NSF
  grant DMS-1101326} 
\subjclass[2010]{Primary: 37B50 Secondary: 52C23,
 37B10} 
\keywords{Self-similar, substitution, invariant measures} 

\begin{abstract}
We propose a formalism for tilings with infinite local complexity
(ILC), and especially fusion tilings with ILC.   
We allow an infinite variety of tile types but
  require that the space of possible tile types be compact. Examples
  include solenoids, pinwheel tilings, tilings with fault lines, and
  tilings with infinitely many tile sizes, shapes, or labels. Special attention
is given to tilings where the infinite local complexity comes purely from
geometry (shears) or comes purely from combinatorics (labels). 
We examine spectral properties of the invariant measures and 
define a new notion of complexity that applies to ILC tilings. 
\end{abstract}

\maketitle

\setlength{\baselineskip}{.6cm}


\section{Definitions}

In the standard theory of tiling dynamical systems, which is motivated in
part by the discovery of aperiodic solids, tilings are constructed from a
finite number of tile types that can be thought of as atoms.  It is
usually assumed that these prototiles have only a finite number of
possible types of adjacencies and this is called {\em finite local
  complexity}, or FLC.  Recently, many interesting tiling models have arisen
that do not satisfy this property.  

In a tiling with {\em infinite local complexity} (ILC), 
there are infinitely many 2-tile patterns, that is, infinitely
many ways for two tiles to meet. Consider an ILC tiling that
is made from a finite set of tile types.  If we
then `collar' the tiles by marking each tile by the pattern of tiles around it,
we obtain an equivalent tiling with infinitely many tile types. We therefore
allow arbitrarily many tile types from the start, but require that the
space of tile types should be compact.  In particular, it 
should only be given the (usual) discrete topology when the set of 
possible tiles is finite.

\subsection{Tiles, labels, patches, tilings, and hulls.}

We will construct tilings of Euclidean space $\R^d$.  A {\em tile $t$}
is a pair $t=(supp(t),\type(t))$, where $supp(t)$, the support, is a
closed set homeomorphic to a ball in $\R^d$, and $\type(t)$ is an
element of some compact metric space $\lcal$.  We will move tiles
around using the action of translation by all elements of $\R^d$.  An
element $x \in \R^d$ acts on a tile $t$ by acting on its support: $t-x
= (supp(t)-x, \type(t))$.  Two tiles $t_1$ and $t_2$ are said to be
{\em equivalent} if there is an $x \in \R^d$ such that $t_1-x = t_2$.
In particular, they must have the same label. Equivalent tiles 
are said to have the same {\em tile type}. 
For convenience we collect one representative of each tile type into a
{\em prototile set} $\ppp$. This is equivalent to picking a point in
the tile, called a {\em control point}, that is placed at the origin.
If the tiles are convex, then it is often convenient to pick each
tile's control point to be its center of mass.

If a sequence of tile labels 
converges in $\lcal$ to a limiting label, we require that
the supports of the corresponding prototiles converge in
the Hausdorff metric to the support of the prototile with the limiting label.
This condition constrains both the shapes of the prototiles and the locations
of the control points. 
This is where our paradigm separates from the `usual' tiling theory:
not only can there be infinitely many tile types, but those tile types
may approximate one another. 

The condition also implies that all 
tiles with a given label are equivalent. Labels serve primarily to distinguish
between tile types whose prototiles have identical supports.

Given a fixed prototile set $\ppp$, we can form patches and tilings
using copies of tiles from $\ppp$.  A {\em $\ppp$-patch} (or {\em
  patch}, for short) is a connected union of tiles equivalent to tiles
from $\ppp$, whose supports intersect only on their boundaries.  A
{\em tiling $\T$} is an infinite patch whose support covers all of
$\R^d$.  An equivalent definition of finite local complexity (FLC)
is that $\T$ contains only a finite number of two-tile patches up to
translational equivalence.

We make a metric on tiles that will be extended to tilings.  The
distance between two tiles is $d(t_1,t_2) =
max\{d_H(supp(t_1),supp(t_2)), d_\lcal(\type(t_1),\type(t_2))\}$,
where $d_H$ is Hausdorff distance on subsets of $\R^d$ and $d_\lcal$ is the
metric on $\lcal$.  We can extend this to finite patches that have a
one-to-one correspondence between their tiles by letting the distance
between two patches be the maximum distance between pairs of
corresponding tiles.  Finally we extend this to tilings by saying that
the distance between two tilings is the minimum $\epsilon$ for which
the two tilings have patches containing the ball of radius
$1/\epsilon$ around the origin that differ by at most $\epsilon$.  If
there is no $\epsilon \le 1$ for which this is true, we simply define
the distance between the tilings to be $1$.

Given a fixed tiling $\T$, the {\em continuous hull $\Omega_\T$} of
$\T$ is the closure of the translational orbit of $\T$:
$$\Omega_\T = \overline{\{\T-x \text{ such that } x \in \R^d\}}$$
More generally, a {\em tiling space $\Omega$} is a closed,
translationally invariant set of tilings constructed using some fixed
prototile set.  In both cases we have a dynamical system $(\Omega,
\R^d)$ which we can study from topological or measure-theoretic
perspectives.

The tiling space $\Omega$ is necessarily compact. This follows from
the compactness of $\ppp$ together with the fact that the possible
offsets between the control points of two adjacent tiles lie in a
closed and bounded subset of $\R^d$. Given any sequence of tilings
$\T_n \in \Omega$, there is a subsequence whose tiles at the origin
converge in both label and position, a subsequence of {\em that} for
which the tiles touching the ``seed'' tile converge, a further subsequence
for which a second ring of tiles converges, and so on. From
Cantor diagonalization we then get a subsequence that converges everywhere. 
Note that if $\ppp$ were not  required
to be compact, then $\Omega$ typically would not be compact, either; 
a sequence of tilings with a non-convergent sequence of
tiles at the origin does not have had a convergent 
subsequence.
 
\subsection{Forms of infinite local complexity}

There are a number of standard examples of tilings with ILC, exhibiting 
different ways that ILC emerges. In some tilings, tiles appear in an
infinite number of shapes, or even an infinite number of sizes. In others,
the geometry of the tiles is simple, but tiles come with an infinite number
of labels. In others, there are only finitely many tile types, but 
infinitely many ways for two tiles to meet. 

Bratteli-Vershik systems \cite{SOMETHING} are associated with
1-dimensional subshifts (and hence with FLC tilings) and also with
non-expansive automorphisms of a Cantor set.  These non-expansive
automorphisms can be realized as ILC fusion tilings with an infinite
tile set.

In the pinwheel tiling \cite{Radin.pinwheel}, all tiles are 1--2--$\sqrt{5}$
right triangles, but the triangles point in infinitely many directions,
uniformly distributed on the circle. With respect to translations, that
means infinitely many tile types. Versions of the generalized pinwheel
\cite{Sadun.pin} exhibit tiles that are similar triangles, pointing in 
infinitely many directions and having infinitely many different sizes. 
 
ILC tilings can also occur with 
finite tile sets. One can have shears along a ``fault line'' where the
tiles on one side of the line are offset from the tiles on the other side. 
For instance, imagine a tiling of $\R^2$ by two types of tiles, one a rectangle
of irrational width $\alpha$ and height 1, and the other a unit square. 
Imagine that these tiles assemble into alternating rows of unit-width and
width-$\alpha$ tiles. There will be infinitely many 
offsets between adjacent tiles along each boundary between rows. This behavior
occurs frequently in tilings generated by substitutions and generalized
substitutions when the linear stretching factor
is not a Pisot number \cite{Danzer, Me.Robbie, Me.Lorenzo, Kenyon.rigid, 
Sadun.pin}.

\subsection{Outline}

In Section \ref{sec-fusion} we review the formalism of fusion for FLC 
tilings and adapt the definitions to the ILC setting. Most well-known 
examples of ILC tilings are fusion tilings, and much
can be said about them. Many of the results
of \cite{Fusion} carry over, only with finite-dimensional 
vectors replaced by measures and with matrices replaced by maps of measures. 
In Section \ref{sec-basicdynamics} we consider dynamical properties such as 
minimality and expansivity. Some well-known results concerning expansivity
do {\em not} carry over to the ILC setting, and we define {\em strong
expansivity} to account for the differences. 
In Section \ref{sec-measures} we address the measure theory of ILC tilings,
especially ILC fusion tilings. In Section \ref{sec-complexity} we 
adapt ideas of topological pressure and topological entropy to define a
complexity function for ILC tilings. We also relate this complexity to
expansivity. Finally, in Section \ref{sec-geography} we survey the landscape of ILC
tilings, defining different classes of ILC tilings, and seeing how various
examples fit into the landscape.  In the Appendix 
we present several examples in detail. 

\noindent{\bf Acknowledgments.} We thank Ian Putnam for hospitality and
many helpful discussions. We thank the Banff International Research Station
and the participants of the 2011 Banff workshop on Aperiodic Order. 
The work of L.S.~ is partially supported by NSF grant DMS-1101326. 

\section{Compact fusion rules and fusion tilings}\label{sec-fusion}

In previous work \cite{Fusion} 
we defined fusion rules for tilings with finite local
complexity, and now we adapt this definition to handle compact tiling
spaces. Given two $\ppp$-patches $P_1$ and $P_2$ and two translations
$x_1, x_2 \in \R^d$, if the union $P_1- x_1 \cup P_2 - x_2$ forms a
$\ppp$-patch, and if $P_1-x_1$ and $P_2-x_2$ have no tiles in common,
we call the union of $P_1-x_1$ and $P_2-x_2$ the {\em fusion} of $P_1$ to $P_2$.
Patch fusion is simply a version of concatenation for geometric objects.

Intuitively, a fusion tiling develops according to an atomic model: we
have atoms, and those atoms group themselves into molecules, which
group together into larger and larger structures. 
Let $\ppp_0 = \ppp$ be our prototile set, our ``atoms''.  $\ppp$ 
is labeled by a compact set $\lcal_0$.  The first set
of ``molecules'' they form is a set $\ppp_1$ of finite
$\ppp_0$-patches. To each element of $\ppp_1$ we associate a (distinct) label
from a compact set $\lcal_1$.  We use the
notation $\ppp_1 = \{P_1(c) | c \in \lcal_1\}$, where for each $c \in
\lcal_1$ the patch $P_1(c)$ is a finite fusion of elements of
$\ppp_0$.

Similarly, $\ppp_2$ is a set of finite patches,
indexed by a compact label set $\lcal_2$, that are fusions of the
patches in $\ppp_1$, and we write $\ppp_2 = \{P_2(c)| c \in \lcal_2\}$.
We continue in this fashion,
constructing $\ppp_n$ as a set of finite patches that are
fusions of elements of $\ppp_{n-1}$, labeled by some compact set
$\lcal_n$.  While the elements of $\ppp_n$ are technically
$\ppp$-patches, we can also think of them as $\ppp_k$-patches for any
$k \le n$ by considering the elements of $\ppp_k$ as prototiles. At each
stage we assume that the locations of the patches are chosen such 
that $\ppp_k$ is homeomorphic to $\lcal_k$.  We require consistency between
the metrics on the supertile sets in a way we will describe in section \ref{supertile.metric}.

The elements of $\ppp_n$ are called
{\em $n$-supertiles}.  We collect them together into an atlas
of patches we call our {\em fusion rule}:
$$\rrr= \left\{\ppp_n, n \ge 0 \right\} = \left\{ P_n(c) \,\, | \,\,n \in \N 
  \text{ and } c \in \lcal_n \right\}.
$$ 
We say that a finite patch is
{\em admitted by $\rrr$} if it can be arbitrarily well approximated by
subsets of elements of $\rrr$.  If it actually appears inside $P_n(c)$ for some
$n$ and $c \in \lcal_n$, we say it is {\em literally admitted} and if
it appears only as the limit of literally admitted patches we say it
is {\em admitted in the limit}.  
A tiling $\T$ of $\R^d$ is said to be a {\em fusion tiling with fusion
  rule $\rrr$} if every patch of tiles contained in $\T$ is admitted
by $\rrr$.  We denote by $\Omega_\rrr$ the set of all $\rrr$-fusion
tilings.  

For any $n$ we may consider the related space $\omegan_\rrr$ that consists
of the same tilings as $\Omega_\rrr$, except that the prototiles are elements
of $\ppp_n$ instead of $\ppp_0$. That is, we ignore the lowest $n$ levels
of the hierarchy. 
In a fusion tiling, we can break each $n$-supertile into
$(n-1)$-supertiles using the {\em subdivision map $\sigma_n$}, which
is a map from $\omegan_\rrr$ to $\omeganm_\rrr$.  
It is clear that this subdivision map is always a continuous
surjection, but it may not be an injection.  If for all $n$ it is,
then we call the fusion rule {\em recognizable}.
Recognizability means that there is a unique way to decompose each
tiling as a union of $n$-supertiles.

To avoid trivialities, we assume that each $\ppp_n$ consists only of supertiles that
actually appear in some tiling in $\omegan_\rrr$.  We can always
achieve this by shrinking each set $\ppp_n$, eliminating those
spurious supertiles that do not appear in any tilings. We also assume
that each $\ppp_n$ is non-empty, which is equivalent to $\Omega_\rrr$
being non-empty.

\subsection{Metric on $\lcal_n$ and $\ppp_n$}
\label{supertile.metric}
Each $\lcal_n$ is assumed to be a compact metric space, with a metric 
compatible with the metric defined 
on lower levels of the hierarchy. Specifically,
if two labels in $\lcal_n$ are within $\epsilon$, then 
the prototiles $P, P' \in \ppp_n$ that represent them must 
have constituent $(n-1)$-supertiles that are in
one-to-one correspondence, that differ by no more than $\epsilon$ in
the Hausdorff metric on their supports, and whose labels differ by no
more than $\epsilon$ in $\lcal_{n-1}$.

In most examples we will want the metric on $\lcal_n$ to be induced
directly from the metric on $\lcal_{n-1}$. That is, for two
$n$-supertiles to be considered close if and only if all of their
constituent $(n-1)$-supertiles are close, and (by induction) 
if and only if all of their consitutent tiles are close. However, there
are important examples where this is not the case, where 
two $n$-supertiles with different labels may have identical decompositions 
into $(n-1)$-supertiles. This can occur naturally when $\lcal_n$ contains 
collaring information, and is essential to a variety of collaring schemes.

Besides being compact metric spaces, the sets $\lcal_n$ 
must admit well-defined $\sigma$-algebras of measurable subsets. We will
henceforth assume that these algebras have been specified, and speak freely of
measurable subsets of $\lcal_n$. Since $\ppp_n$ is homeomorphic to $\lcal_n$,
we can also speak of measurable subsets of $\ppp_n$.

\subsection{The transition map} \label{transition.map}
A standard construct in both self-similar tiling and substitution sequence
theory is the transition matrix, whose $(i,j)$ entry
counts how
many tiles of type $i$ are found in a substituted tile of type $j$. 
A similar analysis applies to fusions with FLC \cite{Fusion}, 
where $M_{n,N}(i,j)$ tells how many
$n$-supertiles of type $i$ are found in an $N$-supertile of type $j$.
These matrices satisfy $M_{n,N} = M_{n,m}M_{m,N}$ for each integer $m$ between
$n$ and $N$. Many ergodic properties of a fusion tiling space, such as 
whether it is uniquely ergodic, reduce to properties of these matrices
\cite{Fusion}.

An apparent obstacle for fusion rules on non-FLC spaces is that the
spaces $\lcal_n$ that label $n$-supertiles need not be finite.  
However, since we
require each $n$-supertile to be a {\em finite} fusion of
$(n-1)$-supertiles, we can still define the {\em transition map}
$M_{n,N}: \ppp_n \times \ppp_N \to \Z$ by
\begin{eqnarray*} M_{n,N} (P,Q) &=& \#(P \text{ in } Q) \cr
&:= & \text{ the number of } n \text{-supertiles equivalent to } 
P  \text{ in the } N\text{-supertile } Q.
\end{eqnarray*}
Thus the $Q$th `column' $M_{n,N}(*,Q)$ 
gives the breakdown of $Q$ in terms of the
$n$-supertiles that it contains, and will consist of 0's except in
finitely many places.  If there is more than one way that the
$n$-supertiles can be fused to create $Q$ (that is, if the fusion
is not recognizable), we fix a preferred one to use in this and all
other computations.

We will use the transition map in three different ways throughout this
paper: as defined above, as a measure on $\ppp_n$, and as an operator
mapping (``pushing forward'') measures on $\ppp_N$ to
measures on $\ppp_n$.  We give the details on these
three views in subsection \ref{transition.views}.  As in the FLC case,
the transition map determines quite a bit about the invariant
probability measures on $\Omega_\rrr$.

\begin{ex} \label{pinwheel_ex2} {\em Pinwheel tilings.}
  In the pinwheel tiling, all tiles are $1, 2, \sqrt{5}$ right
  triangles, but tilings consist of tiles pointing in infinitely many
  directions. We call a triangle with vertices at $(-1.5,-.5)$,
  $(.5,-.5)$ and $(.5,.5)$ {\em right-handed} and give it label (R,0),
  and a triangle with vertices at $(-1.5,.5)$, $(.5,.5)$ and
  $(.5,-.5)$ is called {\em left-handed} and has label (L,0).  Our
  label set $\lcal_0$ consists of two circles, and the prototile with label
  (R,$\theta$) (resp.(L,$\theta$)) is obtained by rotating the (R,0)
  (resp. (L,0)) prototile counterclockwise around the origin by
  $\theta$.  Two tiles are close in our tile metric if they have the
  same handedness, if their angles $\theta$ are close, and if their
  centers are close. This is the same as being close in the Hausdorff
  metric. 

Likewise, $\lcal_n$ consists of two circles, with
the $(R,\theta)$ $n$-supertile being an expansion by $5^{n/2}$ of the 
$(R,\theta)$ tile, and likewise for the $(L,\theta)$ $n$-supertile. Let 
$\alpha=\tan^{-1}(1/2)$. Each $(R,\theta)$ $n$-supertile is built from
five $(n-1)$-supertiles, two of type $(L,\theta+\alpha)$, one of type
$(L,\theta+\alpha + \frac{\pi}{2})$, one of type $(R,\theta+\alpha)$ and
one of type $(R, \theta+\alpha+\pi)$, arranged as in Figure \ref{pinfusion}.
\begin{figure}[ht]
\includegraphics[width=1.5in]{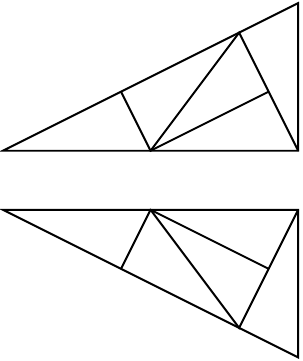}
\caption{The pinwheel fusion rule builds $n$-supertiles from five 
$(n-1)$-supertiles}
\label{pinfusion}
\end{figure}
Likewise, each $(L,\theta)$ $n$-supertile is built from $(n-1)$-supertiles
of type $(R,\theta-\alpha)$,
$(R,\theta-\alpha -\frac{\pi}{2})$, $(L,\theta-\alpha)$ and $(L,\theta-\alpha
+\pi)$. 

To compute the transition map, consider $M_{n, (n+1)}( (H, \omega),
(R, \theta))$.  It equals 1 if $(H,\omega) = (R, \theta + \alpha), (R,
\theta + \alpha + \pi)$, or $(L, \theta + \alpha + \pi/2)$, it equals
2 when $(H,\omega) = (L, \theta + \alpha)$, and it equals 0 otherwise.
Transition for the $(n+1)$-supertile of type $(L, \theta)$ is similar.

Since this example involves infinite many tile orientations, it necessarily
involves infinitely many tile labels. The infinite local complexity has 
both a combinatorial and a geometrical aspect, but both are consequences 
of rotational symmetry. We call this {\em rotational infinite local
complexity.}
\end{ex}

\begin{ex} \label{Non-Pisot_DPV} {\em Shear infinite local complexity.}  
In our next example, each $\ppp_n$ consists of only
four supertiles, and the infinite local complexity comes from the geometry
of how two tiles can touch.
Let $\ppp_0 =
  \left\{\raisebox{-.4cm}{\includegraphics[width=1.0cm]{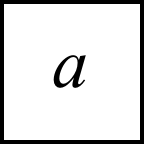}},
    \raisebox{-.4cm}{\includegraphics[width=1.0cm]{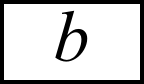}},
    \raisebox{-.4cm}{\includegraphics[width=.6cm]{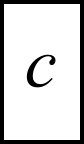}},
    \raisebox{-.4cm}{\includegraphics[width=.6cm]{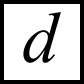}}\right\}$,
  where the long edges are of some fixed length $\alpha$ and the short
  edges are of length $1$.

For the $1$-supertiles we choose $\ppp_1 =
  \left\{\raisebox{-.8cm}{\includegraphics[width=2.0cm]{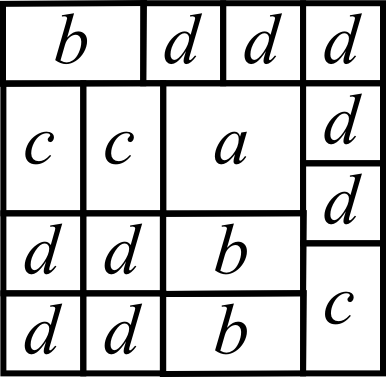}},
    \raisebox{-.8cm}{\includegraphics[width=2.0cm]{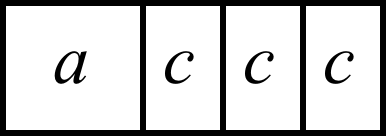}},
    \raisebox{-.8cm}{\includegraphics[width=.8cm]{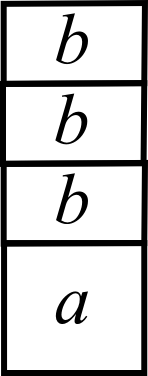}},
    \raisebox{-.8cm}{\includegraphics[width=.7cm]{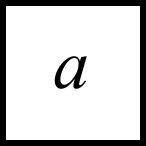}}
  \right\}.$ It is convenient to think of the four supertiles as being
  of types $a, b, c, d$, using the notation $\ppp_1 = \{P_1(a),
  P_1(b), P_1(c), P_1(d)\}$.  We construct the $n$-supertiles from the
  level $(n-1)$-supertiles using the same combinatorics as we did to
  make the 1-supertiles from the prototiles.  For instance, $\ppp_2(a)
  =\raisebox{-2.5cm}{\includegraphics[width=5cm]{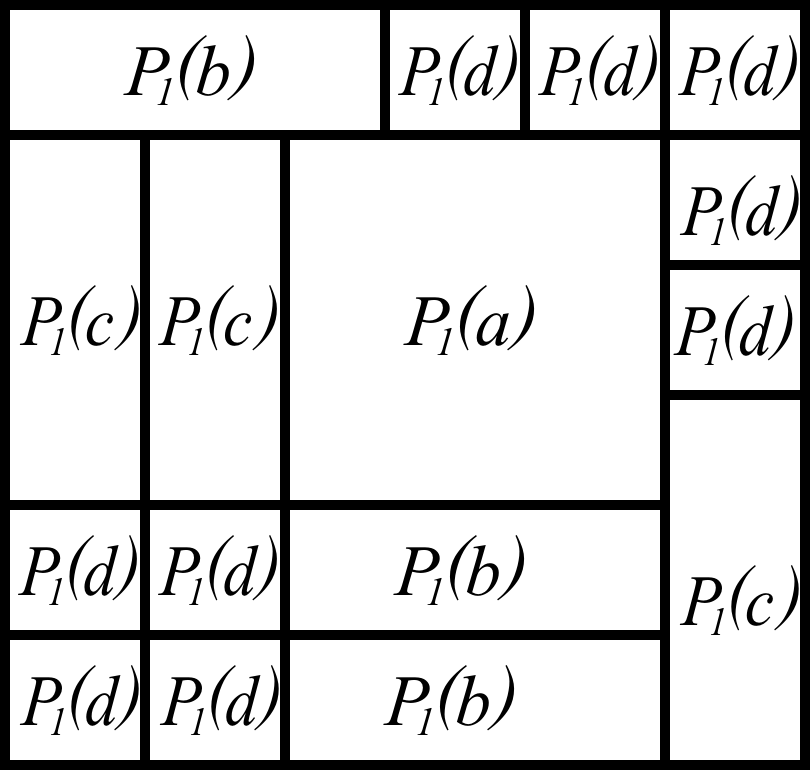}}=
  \raisebox{-2.5cm}{\includegraphics[width=5cm]{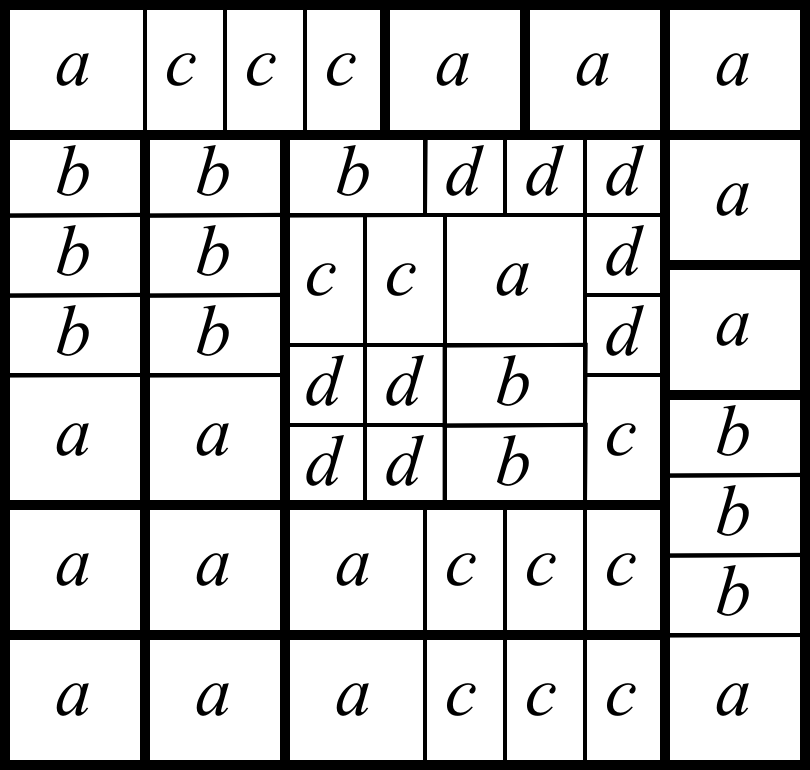}}$.

If $\alpha$ is chosen irrational, then fault lines develop. There
are countably many 2-tile patterns that are literally admitted and 
uncountably many that are admitted in the limit. If $\alpha$ is chosen
rational, then there are only finitely many ways for two $n$-supertiles
to meet, but this number increases with $n$. Either way, 
the large-scale structure of the tilings is different from that of a 
self-similar FLC substitution tiling. These differences show up in the 
spectral theory, cohomology, and complexity.  (See
\cite{Me.Robbie, Me.Lorenzo} and references therein.)

Since at each stage there are only four supertile types, the
transition operator $M_{n,n+1}$ is the matrix $\left[\begin{matrix} 1
    & 1 & 1 & 1\\ 3 & 0 & 3 & 0\\3 & 3 & 0 & 0\\9 & 0 & 0 &
    0 \end{matrix} \right]$ and $M_{n,N}$ is the $(N-n)$th power of this
matrix. 
\end{ex}

\begin{ex}\label{sol-ex}{\em Combinatorial infinite local complexity.}  
Because its (translational)
dynamical system is not expansive, the dyadic solenoid system is not
topologically conjugate to a tiling system with finite local
complexity.  However, it can be expressed as an ILC fusion tiling with
infinitely many tile labels. In this example the geometry is trivial
and the infinite local complexity is purely combinatorial.  

The prototiles are unit length tiles that carry labels $\{A_0, A_1,
\ldots \} \cup \{A_\infty\}$, such that $A_\infty = \lim_{n \to \infty}
  A_n$ is the only accumulation point of the label set.  We define
$$\ppp_1 = \{A_1 A_0, A_2 A_0, A_3 A_0, ...,  A_\infty A_0\},$$
in other words $P_1(k)=A_kA_0$ for $k=1,2,\ldots,\infty$.  
Similarly we define
$$\ppp_2 = \{P_1(2)P_1(1), P_1(3) P_1(1), ..., P_1(\infty) P_1(1)\}.$$ 
In general, every element of $\ppp_{n+1}$ takes the form $P_{n+1}(k) =
P_n(k) P_n(n)$, for all $k \ge n+1$ including $k = \infty$.

Tilings admitted by this fusion rule have $A_0$ in every other spot,
$A_1$ in every fourth spot, and so on, with each species
$A_k$ with $k<\infty$ appearing in every $(2^{k+1})$-st spot. 
In addition, there may be one (and only one) copy of $A_\infty$. 
Since all $A_n$ tiles with $n\ge k$ have the same location (mod $2^{k}$),
the location of an arbitrary $A_n$ with $n \ge k$ gives a map to 
$S^1=\R/2^k\Z$. Taken together, these maps associate a tiling with
a point in the dyadic solenoid $\ilim (\R/2^k\Z)$.
A discrete version of this construction, mimicking an odometer rather
than a solenoid, is called a ``Toeplitz flow''. \cite{Toeplitz}

$M_{n, n+1}(k, l)$ equals $1$ if $k = n$ or if
$k = l$, and otherwise it is 0.  For $N > n$ and relevant values of
$(k,l)$, $M_{n,N}(k,l)$ equals $2^{N-k-1}$ if $k < N$, $1$ if $k
= l$, and 0 otherwise.
\end{ex}

The complexity and ergodic theory of these examples will be worked out in
the Appendix. 

\subsection{Primitivity and the van Hove property}
\label{VanHove-sec}

A fusion rule is said to be {\em primitive} if for any positive
integer $n$ and any open set $U$ of supertiles in $\ppp_n$, there is
an $N > n$ such that every element of $\ppp_N$ contains an element of
$U$.  Primitivity means that the space $\Omega_\rrr$ is fairly
homogeneous, in that each tiling contains patches arbitrarily close to
any particular admissible patch.

A {\em van Hove sequence} $\{A_m\}$ of subsets of $\R^d$ consists of sets
whose boundaries are increasingly trivial relative to their interiors
in a precise sense. 
For any set $A \in \R^d$ and $r > 0$, let
$$ A^{+r} = \{x\in \R^d : \text{ dist}(x, A) \le r\}, $$
where ``dist'' denotes Euclidean distance. A sequence of sets $\{A_n\}$ of sets
in $\R^d$ is called a van Hove sequence if for any $r \ge 0$
$$\lim_{ n \to \infty} \frac{\text{Vol}\left((\partial A_n)^{+r}\right)}
{\text{Vol}(A_n)} 
= 0,$$ 
where $\partial A$ is the boundary of $A$ and $\text{Vol}$ is Euclidean
volume. 

Given a fusion rule $\rrr$, we may make a sequence of sets in $\R^d$
by taking one $n$-supertile for each $n$ and calling its support
$A_n$.  We say $\rrr$ is a {\em van Hove fusion rule} if every such
sequence is a van Hove sequence. Equivalently, a fusion rule is van Hove
if for each $\epsilon >0$ and each $r>0$ 
there exists an integer $n_0$ such that each 
$n$-supertile $A$, with $n \ge n_0$, has $\text{Vol}(\partial A)^{+r}<
\epsilon \text{Vol}(A)$.

\section{Dynamics of ILC tiling spaces}
\label{sec-basicdynamics}

A compact tiling space $\Omega_\rrr$, by definition, is invariant
under translation by elements of $\R^d$. The action of translation is
continuous in the tiling metric and gives rise to a topological
dynamical system $(\Omega_\rrr,\R^d)$.  Tiling dynamical systems have
been studied extensively in the FLC case, and in this section we
investigate the dynamics of ILC tiling systems in general and in the
fusion situation.  We show under what circumstances 
a fusion tiling dynamical system is
minimal, and we discuss what expansivity means and introduce the
related concept of strong expansivity.

\subsection{Minimality}
\label{sec-minimal}
 Recall that a topological dynamical system is said to be
{\em minimal} if every orbit is dense.  

\begin{prop}
If the fusion rule  $\rrr$ is primitive, then
the fusion tiling space 
  $(\Omega_\rrr, \R^d)$ is minimal. Conversely, if $\rrr$ is 
recognizable and van Hove
but is not primitive, then $(\Omega_\rrr, \R^d)$ is not minimal.
\end{prop}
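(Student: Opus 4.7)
\smallskip
\noindent\emph{Proof plan.} For the forward implication, fix $\T, \T' \in \Omega_\rrr$ and $\epsilon > 0$, and let $P'$ be the patch of $\T'$ covering the ball $B(0,1/\epsilon)$. I would show some translate of $\T$ lies within $\epsilon$ of $\T'$ via a four-step chain of approximations. First, since $P'$ is a finite patch in a fusion tiling, it is admitted by $\rrr$, so there exist $n$, $c_0 \in \lcal_n$, and a subpatch of $P_n(c_0)$ approximating $P'$ to within $\epsilon/3$. Second, the metric compatibility of Section \ref{supertile.metric} produces an open neighborhood $U \ni c_0$ in $\lcal_n$ such that every $P_n(c)$ with $c \in U$ contains, at the same relative location, an $\epsilon/3$-approximation of $P'$. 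Third, primitivity applied to $U$ yields $N > n$ such that every $N$-supertile contains an $n$-supertile from $U$, and hence an $\epsilon/3$-copy of $P'$. Fourth, one argues that $\T$ itself contains a patch that $\epsilon/3$-approximates some $N$-supertile (see below), so that translating $\T$ to place the embedded approximate $P'$ at the origin yields a translate agreeing with $\T'$ on $B(0,1/\epsilon)$ to within $\epsilon$.

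For the converse, suppose $\rrr$ is recognizable and van Hove but not primitive, so there exist $n$ and an open $U \subset \ppp_n$ such that for every $N > n$ some $N$-supertile $P_N(c_N)$ contains no $n$-supertile from $U$. My plan is to construct a tiling $\T_\infty \in \Omega_\rrr$ whose $n$-supertile decomposition entirely avoids $U$; minimality then fails at $\T_\infty$. For each $N$, embed $P_N(c_N)$ into some tiling $\T_N \in \Omega_\rrr$ (which exists by the no-spurious-supertiles assumption) and translate so that the origin lies at distance $r_N$ from $\partial P_N(c_N)$ with $r_N \to \infty$; the van Hove property is what makes such choices available. Extract a subsequential limit $\T_\infty \in \Omega_\rrr$ by compactness. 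Because recognizability makes $\sigma_n$ a homeomorphism, the unique $n$-supertile decomposition depends continuously on the tiling; since the decomposition of each $\T_N$ inside $B(0,r_N)$ takes labels in the closed set $K = \ppp_n \setminus U$, the same holds for $\T_\infty$ and for every translate $\T_\infty - x$. Now pick any $c \in U$ and any $\T \in \Omega_\rrr$ whose decomposition contains $P_n(c)$; were $\T$ in the orbit closure of $\T_\infty$, continuity of $\sigma_n^{-1}$ together with closedness of $K$ would force $\T$'s decomposition to lie in $K$, contradicting the presence of $c \in U$. Hence the orbit of $\T_\infty$ is not dense and $(\Omega_\rrr, \R^d)$ is not minimal.

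The main obstacle, I expect, is the fourth step of the forward direction: ensuring that inside a sufficiently large patch of $\T$ one finds an entire $N$-supertile, rather than only a subpatch of some higher-level supertile whose boundary might cut through every $N$-supertile it contains. The standard remedy is a diagonal/compactness argument in the spirit of the Cantor diagonalization of Section 1, choosing the approximating $m$-supertile with $m \gg N$ large enough to contain many $N$-supertiles deep in its interior, combined with a van-Hove-type estimate to guarantee that at least one such interior $N$-supertile lies inside the part of the $m$-supertile that approximates the chosen patch of $\T$. In the converse direction, the analogous technical subtlety is that the labels of the $n$-supertile decomposition converge along with the tilings themselves, which is precisely the content of recognizability through the homeomorphicity of $\sigma_n$.
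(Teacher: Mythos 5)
Your proposal is correct and follows essentially the same route as the paper in both directions: approximate the central patch of $\T'$ by a subpatch of some $P_n(c)$, pass to a small open neighborhood of $P_n(c)$ in $\ppp_n$, invoke primitivity to find $N$, and translate an $N$-supertile of $\T$ to the origin; and, for the converse, center tilings deep inside supertiles avoiding $U$ (using the van Hove property), extract a compactness limit $\T_\infty$, and use recognizability to conclude that $\T_\infty$ and its orbit closure still avoid $U$. The ``main obstacle'' you flag in the forward direction dissolves in this framework, since the subdivision maps $\omegan_\rrr \to \omeganm_\rrr$ are surjective and hence every fusion tiling literally decomposes into $N$-supertiles, so one simply chooses any $N$-supertile of $\T$; your use of continuity of $\sigma_n^{-1}$ in the converse plays the same role as the paper's passage from the label-level open set $U$ to a patch-level open set $\tilde U$.
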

\begin{proof}
First suppose that $\rrr$ is primitive. 
  Fix any $\T \in \Omega_\rrr$ and let $\T'$ be any other element of
  $\Omega_\rrr$.  We will show that for any $\epsilon > 0$ there is a
  $\vecv$ such that $d(\T-\vecv, \T') < \epsilon$.  Let $P' \subset
  \T'$ be the patch of tiles in $\T'$ containing the ball of radius
  $1/\epsilon$ about the origin.  We know that $P'$ is admitted by
  $\rrr$, so there is an $n$ and an $P_n(c) \in \ppp_n$ for which the
  distance between $P'$ and a subpatch of $P_n(c)$ is less than
  $\epsilon/2$.

  Consider an open subset $I \subset \ppp_n$ that contains $P_n(c)$
  and is less than $\epsilon/2$ in diameter.  By
  primitivity, there is an $N > n$ such that every element of $\ppp_N$
  contains an element of $I$.  Choose any $N$-supertile in $\T$ and
  let $\vecv$ be the translation that brings that $N$-supertile to the
  origin in such a way that $I \cap (\T - \vecv) \neq \emptyset$.  That
  is, $\T-\vecv$ has a patch at the origin that is within $\epsilon/2$
  of $P'$.  This means that $d(\T-\vecv, \T') < \epsilon$, as desired.

Now suppose that $\rrr$ is recognizable and 
van Hove but not primitive. Pick an $n$-supertile
$P$ and a neighborhood $U \in \ppp_n$ of $P$  such that there exist 
supertiles of arbitrarily high order that do not contain any elements of 
$U$. By recognizability, this means that there is an open set $\tilde U$
of patches such that there are supertiles of arbitrarily high
order that do not contain patches equivalent to elements of $\tilde U$. 
(The difference between $U$ and $\tilde U$ is that elements of $U$ may
be marked with labels that carry additional information, as with 
collaring, while elements of $\tilde U$ are not.) Let $\T_0$ be a tiling 
featuring an element of $\tilde U$ near the origin. For each sufficiently
large $N$, let $\T_N$ be a tiling where a ball of radius $N$ around the
origin sits in a supertile that does not contain any elements of $\tilde U$.
The existence of such supertiles is guaranteed by the van Hove property.
By compactness, the sequence $\T_N$ has a subsequence that converges to a 
tiling $\T_\infty$, and none of the patches of $\tilde U$ appear anywhere
in $\T_\infty$.  This means that $\T_0$ is not in the orbit closure of 
$\T_\infty$, and hence that $\Omega_\rrr$ is not minimal. 
\end{proof}

If a fusion rule is neither primitive nor van Hove, then the tiling space
may or may not be minimal. For instance, the Chacon substitution $a \to aaba$,
$b \to b$ yields a minimal tiling space, but the substitution $a \to aaba$,
$b \to b$, $c \to ccdc$, $d \to d$ does not.

\subsection{Expansivity and transversals}

A tiling space is said to have {\em expansive} translational dynamics if
there is an $\epsilon>0$ such that the condition $d(\T-x,\T'-x)<\epsilon$
for all $x$ implies that $\T'=\T-y$ for some $y\in \R^d$ with $|y|<\epsilon$.
In other words, in an expansive system, 
tilings that are close must have translates that are no
longer close, unless they were small translates of each other to begin 
with.  

Every tiling in a tiling space is a bounded translate of a tiling that has a control
point at the origin (and therefore contains a tile in $\ppp_0$).  These tilings comprise the  {\em 
transversal } $\Xi$ of a tiling space.
There is a neighborhood of any tiling $\T$ in a tiling space that is homeomorphic to the product of a disk in 
$\R^d$ (for instance, an open set containing the origin in its tile in $\T$) and
a neighborhood in $\Xi$ (for instance, all tilings that are equivalent to $\T$ in a ball of some radius around the origin).   We can think of the transversal as a sort of global Poincar\'e section for the action of translation.

When a tiling has finite local complexity, the transversal is totally
disconnected and in most standard examples is in fact a Cantor set.
Tiling spaces with infinite local complexity can also have totally
disconnected transversals.  This property is invariant under
homeomorphism.

\begin{lem}\label{transversal_topology_invariant}
If two tiling spaces are homeomorphic and one has a totally disconnected 
transversal, then so does the other. 
\end{lem}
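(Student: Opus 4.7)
The strategy is to extract a purely topological property of the tiling space that is equivalent to the transversal being totally disconnected; a homeomorphism then transfers this property between $\Omega_1$ and $\Omega_2$.

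I claim that $\Xi$ is totally disconnected if and only if every $\T \in \Omega$ has arbitrarily small open neighborhoods $U$ whose connected component containing $\T$ is homeomorphic to an open ball in $\R^d$. The forward direction is immediate from the local product structure: any product-chart neighborhood $D \times V$ of $\T$, with $V \subset \Xi$ open, inherits total disconnectedness from $\Xi$, so the connected component of $\T$ in $D \times V$ is $D \times \{\text{pt}\} \cong D$, an open $d$-disk.

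For the reverse direction, pick $\xi \in \Xi$ (viewed as a tiling $\T_\xi \in \Omega$), fix by hypothesis an open neighborhood $U$ of $\T_\xi$ whose component $D_1$ is an open $d$-ball, and inside $U$ choose a product chart $B \times W$ around $\T_\xi$. The component of $\T_\xi$ in $B \times W$ is $B \times C$, where $C$ is the component of $\xi$ in $W$, and $B \times C \subset D_1$. The key step is invariance of domain: each slice $B \times \{c\}$, being a continuous injection of an open subset of $\R^d$ into $\R^d$, has open image in $D_1$. The slices $\{B \times \{c\} : c \in C\}$ thus partition the connected set $B \times C$ into disjoint nonempty open sets, which is only possible if $|C| = 1$. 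So every sufficiently small open $W \ni \xi$ in $\Xi$ has $\{\xi\}$ as its component of $\xi$. A standard regularity argument in compact metric $\Xi$---choose a closed neighborhood $F \subset W$ of $\xi$, note the component of $\xi$ in $F$ is again $\{\xi\}$, use that components equal quasi-components in compact Hausdorff $F$ to pick a small clopen-in-$F$ set $V \ni \xi$ contained in $\operatorname{int}(F)$---promotes this to a basis of clopen neighborhoods at $\xi$, giving $\Xi$ totally disconnected.

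The main obstacle is the reverse direction, and the critical ingredient is invariance of domain: it converts the product structure of $B \times C$ into a partition into opens and contradicts connectedness directly, bypassing any appeal to general dimension theory. With the characterization established, the lemma follows immediately: $\Omega_1$ has the disk-component property by the forward direction applied to $\Xi_1$; a homeomorphism transports this topological property to $\Omega_2$; and the reverse direction then yields total disconnectedness of $\Xi_2$.
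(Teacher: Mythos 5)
Your proof is correct, and it takes a genuinely different (and in places more careful) route than the paper. The paper also starts from the local product structure $D\times V$ with $V\subset\Xi$, but its mechanism is path components: when $\Xi$ is totally disconnected the path components of a chart are the slices $D\times\{v\}$, a neighborhood in $\Xi$ parametrizes them, and the paper then simply asserts that homeomorphisms preserve local path components, so a neighborhood of $\phi(\T)$ in $\Xi'$ is homeomorphic to one of $\T$ in $\Xi$. You instead isolate an intrinsic characterization (components of small neighborhoods are open $d$-balls) and prove the hard converse direction with invariance of domain: the slices $B\times\{c\}$ are open in the ball $D_1$, so connectedness of $B\times C$ forces $|C|=1$, and the quasi-component argument in the compact transversal upgrades ``trivial local components'' to total disconnectedness. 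What your approach buys is rigor exactly where the paper is sketchiest --- the passage back from a property of $\Omega'$ to a property of $\Xi'$, where the paper's path-component parametrization is only clearly a homeomorphism onto the transversal neighborhood once one already knows something about $\Xi'$; your use of connected components plus invariance of domain avoids that circularity, and your final quasi-component step supplies the justification for the paper's bare claim that ``total disconnectivity is a local property.'' The costs are mild: you invoke compactness (or at least local compactness) of the transversal, and invariance of domain, neither of which the paper's sketch makes explicit, but both of which are available in this setting.
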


\begin{proof}
  Let $\Xi$ and $\Xi'$ be transversals for homeomorphic tiling spaces
  $\Omega$ and $\Omega'$, and let $\phi: \Omega \to \Omega'$ be the
  homeomorphism. Suppose $\T \in \Xi$, with $\Xi$ totally
  disconnected.  We construct a neighborhood of $\T$ in $\Omega$ by
  taking the product of an $\epsilon$-disc in $\R^d$ with a
  neighborhood of $\T$ in $\Xi$.  In other words, a neighborhood of
  $\T$ in $\Xi$ parametrizes the path components of a neighborhood of
  $\T$ in $\Omega$.  To align the control points, we adjust 
the homeomorphism by a translation, so that $\phi(\T) \in \Xi'$. 
Since homeomorphisms
  preserve local path components, there is a neighborhood of
  $\phi(\T)$ in $\Xi'$ that is homeomorphic to a neighborhood of $\T$
  in $\Xi$. Since $\Xi$ and $\Xi'$ are locally homeomorphic, and since
  total disconnectivity is a local property, $\Xi'$ is totally
  disconnected.
\end{proof}

In addition to their transversals being totally disconnected, FLC tiling spaces always have expansive translational dynamics. The converse does not hold.

\begin{thm}\label{can-expand} 
There exists a tiling space $\Omega$ with totally disconnected
transversal and expansive translational dynamics that is not homeomorphic 
to an FLC tiling space. 
\end{thm}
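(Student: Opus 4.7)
The plan is to exhibit an explicit ILC tiling space with the required properties and then distinguish it from any FLC tiling space via a topological invariant. For the construction, I would decorate a well-understood FLC substitution tiling space --- say the Fibonacci tiling space $\Omega_F$ --- with an additional continuous label valued in a Cantor set $K$. Concretely, take prototiles $\ppp = \ppp_F \times K$, and require that the $K$-labels of consecutive tiles in any admissible tiling are related by a fixed expansive self-homeomorphism $\phi : K \to K$; for concreteness let $K = \{0,1\}^\Z$ and $\phi$ be the shift. The construction fits into the fusion formalism of Section \ref{sec-fusion} by lifting the Fibonacci substitution to the decorated alphabet via $(a,k) \mapsto (a,k)(b,\phi(k))$, $(b,k) \mapsto (a,k)$.

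The first three requirements are easy to verify. The transversal $\Xi$ of $\Omega$ injects into $\Xi_F \times K$, a product of Cantor sets, and so is totally disconnected. For expansivity, suppose two tilings of $\Omega$ remain $\epsilon$-close under all translations: projecting to the underlying Fibonacci structure and using expansivity of $\Omega_F$, they must be small translates of each other, and after correcting by that translate the expansivity of $\phi$ forces the $K$-labels to agree as well. The system is ILC because the tile labels take values in the uncountable set $K$, already yielding uncountably many two-tile patches.

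The main obstacle is showing that $\Omega$ is not homeomorphic to any FLC tiling space, which I would attack with \v{C}ech cohomology (with rational coefficients). Every FLC tiling space can be realized as an inverse limit of finite CW complexes (branched manifolds), so $\check{H}^*(\Omega_{\rm FLC}, \Q)$ is a direct limit of finite-dimensional $\Q$-vector spaces and is in particular countable-dimensional. For our $\Omega$, I expect $\check{H}^1(\Omega, \Q)$ to contain an uncountable-dimensional summand: using the projection $\Omega \to \Omega_F$ whose fibers encode the $K$-decoration, a Mayer--Vietoris or spectral sequence argument should produce a contribution from $C(K, \Z) \otimes \Q$, which is uncountable-dimensional. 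Establishing this lower bound is the technical heart of the proof; as a fallback, the group of continuous eigenvalues of translation is countable for FLC systems but can be larger here, giving an alternative topological obstruction.
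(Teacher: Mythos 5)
Your construction does not produce a space with the required properties, and the paper's own Lemma \ref{1D_cannot_expand} shows why. In your decorated Fibonacci space the tiles keep the two Fibonacci lengths; the geometry is untouched and all of the new information is combinatorial. The transversal is (a closed subset of) $\Xi_F\times K$ and the first return map is the product of the Fibonacci shift with the full shift $\phi$ on $K=\{0,1\}^\Z$, which is an expansive homeomorphism of a Cantor set. By Lemma \ref{1D_cannot_expand} (equivalently, by the standard fact that an expansive Cantor homeomorphism is conjugate to a subshift on finitely many symbols), your $\Omega$ \emph{is} homeomorphic to an FLC tiling space, so it cannot be the desired example. Consistently with this, the proposed cohomological obstruction evaporates: for any Cantor set $K$ the group $C(K,\Z)$ is generated by indicator functions of clopen sets and is therefore \emph{countable}, so $C(K,\Z)\otimes\Q$ is countable-dimensional and $\check H^1(\Omega,\Q)$ acquires no uncountable summand. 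The eigenvalue fallback fails as well: the full shift is weakly mixing, so the continuous eigenvalue group of your system is just the countable Fibonacci one.

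The underlying tension is that, in one dimension with totally disconnected transversal, expansivity of the first return map is exactly the condition that forces homeomorphism to an FLC space; the expansivity of the flow therefore has to be produced \emph{without} expansivity of the return map. The paper does this with the dyadic solenoid tiling of Example \ref{sol-ex}: its return map is an odometer (equicontinuous, hence not expansive), and the lack of asymptotic composants already certifies that the solenoid is not homeomorphic to any FLC tiling space. One then perturbs the tile \emph{lengths} --- tile $t$ gets length $1+\sum_{n\in S_t}n^{-1}$, where $S_t$ records which $A_n$ tiles lie within $n^2$ tiles of $t$ --- so that combinatorial near-agreements between two tilings accumulate into macroscopic geometric offsets, making the translation flow expansive, while the homeomorphism type is unchanged (rescale all tiles back to length $1$). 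To salvage your approach you would need the $K$-decoration to feed back cumulatively into the tile geometry in this way, rather than only into the labels.
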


\begin{proof}
The dyadic solenoid has a totally disconnected transversal (namely
an odometer) and is known
not to be homeomorphic to any FLC tiling space, due to its lack of 
asymptotic composants. We will construct tilings that are combinatorially
the same as those of the dyadic solenoid, but in which the tiles do
not all have unit length. By choosing
the tile lengths appropriately, we can make the translational
dynamics expansive. Changing all the tile sizes to 1 while preserving the
position of the origin within a tile gives a homeomorphism 
from this tiling space to the dyadic solenoid, showing that this tiling
space cannot be homeomorphic to an FLC tiling space. 

In this example, our label set is a union of subsets of intervals,
one such subset for each non-negative integer, 
plus a single limit point. The $n$-th
interval describes the possible lengths of tiles of general type $n$,
which we denote $A_n$. The possible sequences of general tile types is
exactly as with the dyadic solenoid, and the lengths of the tiles are
determined from the sequence. 

For each tile $t$ in such a sequence, let $S_t$ be the set of integers 
$n > 10$  such that there is an $A_n$ tile within $n^2$ tiles of $t$.  
The length of $t$ is defined to be $1 + \sum_{n \in S_t} n^{-1}$.  
If a tile is within $n^2$ tiles of
an $A_n$, then it must be at least $2^{n}-n^2$ tiles away from any $A_m$
with $m>n$. For this to be within $m^2$, we must have $m$ of order
$2^{n/2}$. The expression $1 + \sum_{n \in S_t} n^{-1}$  thus converges quickly,
and each tile has a well-defined length. 
A limiting tile $A_\infty$ must have length exactly 1.

Note that the stretching changes the lengths of each $A_n$ tile by
less than $2^{1-\frac{n}{2}}$. However, it stretches out the $2n^2$ tiles
surrounding the $A_n$ tile by a total amount between $2n$ and $2n+n^2
2^{2-\frac{n}{2}}$. If two tilings $\T$ and $\T'$ are very close, with $\T$
having an $A_n$ tile at a location $y$ and $\T'$ having an $A_m$ tile
at the same location (up to a tiny translation), with $m>n$ or
$m=\infty$, then for some $x<(n+1)^2$, the tiles of $\T$ and $\T'$
near $x+y$ are offset by more than 1/4.  Therefore, the tiling
dynamics are expansive, while the transversal remains a totally
disconnected odometer.
\end{proof}

\subsection{Strong Expansivity}

In the proof of Theorem \ref{can-expand}, we introduced expansivity in the
tiling flow via a small change to the sizes of some tile types, changes that
accumulated to give macroscopic offsets. However, this stretching does
not change the dynamics of the first return map on the transversal, which
remains addition by 1 on an odometer. In particular, the action of the
first return map is not expansive, and in fact is equicontinuous. In general,
it is the first return map, and not the tiling flow itself, that determines 
the homeomorphism type of a 1-dimensional tiling space. 

\begin{lem}\label{1D_cannot_expand}
If $\Omega$ is a 1-dimensional tiling space whose canonical
transversal is totally
disconnected, and if the first return map on the transversal is expansive, 
then $\Omega$ is homeomorphic to an FLC tiling space.
\end{lem}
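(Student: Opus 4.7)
My plan is to reduce to the standard fact that an expansive homeomorphism on a zero-dimensional compact metric space is topologically conjugate to a subshift on a finite alphabet, and then to exploit the fact that a tiling space is (as a topological space, forgetting the $\R$-action) the suspension of its return map and so is insensitive to continuous positive rescalings of tile lengths.

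First I would view $\Omega$ as a mapping torus. Let $R:\Xi\to\Xi$ denote the first return map to the canonical transversal and let $\tau:\Xi\to(0,\infty)$ be the return time, i.e.\ the length of the tile sitting at the origin. The tile metric makes $\tau$ continuous, and since the prototile space is a compact metric space of tiles (each of which has positive, finite length), $\tau$ is bounded above and below by positive constants. Standard suspension theory then identifies $\Omega$ with the quotient $(\Xi\times\R)/\!\sim$, where $(\T,t+\tau(\T))\sim(R\T,t)$, with the translation action of $\R$ being translation in the second factor.

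Next I would invoke the classical theorem (Hedlund; see also Walters' textbook or Kurka's book) that any expansive homeomorphism of a compact, totally disconnected metric space is topologically conjugate to a subshift on a finite alphabet. Applied to $(\Xi,R)$, this produces a finite alphabet $\aaa$, a subshift $X\subset\aaa^{\Z}$, and a homeomorphism $h:\Xi\to X$ with $h\circ R=\sigma\circ h$, where $\sigma$ is the shift. From $X$ I would build an FLC tiling space $\Omega'$ whose prototiles are unit intervals labeled by $\aaa$ and whose legal bi-infinite sequences of labels are exactly the elements of $X$. By construction $\Omega'$ has finitely many tile types and only finitely many two-tile patches, so it is FLC; and $\Omega'$ is precisely the suspension of $(X,\sigma)$ with the constant roof function $1$.

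Finally I would write down the homeomorphism $\phi:\Omega\to\Omega'$ explicitly. Given a point of $\Omega$ represented by $(\T,t)$ with $0\le t\le\tau(\T)$, set
\[
\phi([\T,t])\;=\;\bigl[h(\T),\,t/\tau(\T)\bigr]\in\Omega'.
\]
The compatibility $\tau(\T)\cdot 1=\tau(\T)$ and the intertwining $h\circ R=\sigma\circ h$ make this well defined on the suspension quotient, continuity of $h$ and of $\tau$ (with $\tau$ bounded away from $0$) make $\phi$ continuous, and an inverse is defined symmetrically using $h^{-1}$ and multiplication by $\tau$. Thus $\phi$ is a homeomorphism from $\Omega$ to the FLC tiling space $\Omega'$, as required. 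Note that $\phi$ is not conjugacy of $\R$-actions: it is only a homeomorphism of underlying topological spaces, which is all that is claimed.

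The main obstacle is Step~2, citing the Hedlund-type structure theorem in the form we need. One must check that our expansivity hypothesis on the return map (formulated in the tiling metric restricted to $\Xi$) is exactly the hypothesis used in that theorem, and that the resulting alphabet is genuinely finite. The argument is to use the expansivity constant to produce a finite clopen partition of $\Xi$ whose iterates separate points; such a partition exists because $\Xi$ is compact and totally disconnected, so every open cover has a refinement by finitely many clopen sets of small diameter. Once that partition is in hand, sending each $\T\in\Xi$ to the bi-infinite sequence of partition labels of its $R$-orbit yields the conjugacy onto a subshift on a finite alphabet. All remaining steps (checking that $\Omega$ is a mapping torus, bounding $\tau$, verifying continuity of $\phi$) are routine given the hypotheses on the prototile space.
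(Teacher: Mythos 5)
Your proposal is correct and follows essentially the same route as the paper: the paper's proof also partitions the totally disconnected transversal into finitely many clopen sets of diameter below the expansivity constant, uses the itinerary of the first return map to identify the transversal with a subshift on a finite alphabet, and then rescales all tiles to unit length to obtain a homeomorphism (not a conjugacy) with an FLC tiling space. You merely make explicit the suspension/mapping-torus bookkeeping and cite the Hedlund-type theorem whose proof the paper sketches inline.
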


\begin{proof} Suppose that the (iterated) first return map eventually 
separates any two distinct points in the transversal by a distance of 
$\epsilon$ or more. Partition the transversal into a finite number of
clopen sets of diameter
less than $\epsilon$, and associate each clopen set with a tile type. 
Make each tile have length 1. For each point in the transversal we 
associate a tiling, with a tile centered at the origin, and 
with the $n$-th tile marking which clopen set the $n$-th return of the point is
in. That is, the transversal is isomorphic to a (bi-infinite) 
subshift on a finite number of symbols, with the first return map 
corresponding to a shift by one. This extends to a homeomorphism between
$\Omega$ and an FLC tiling space. (Note that this homeomorphism need
not be a topological conjugacy. It commutes with translations if and only
if every tile in the $\Omega$ system has length exactly 1.) 
\end{proof}

For analogous results in higher dimensions, we need a property that
generalizes the expansivity of the first return map.

\begin{dfn} A tiling space $\Omega$ (or any space with an $\R^d$ action)
is called {\em strongly expansive} if there
exists an $\epsilon>0$ such that, for any $\T, \T' \in \Omega$, 
if there is a homeomorphism $h$ of $\R^d$ with $h(0)= 0$ and 
$d(\T - x, \T'-h(x)) < \epsilon$ for all $x\in \R^d$ , then $\T = \T'-x_0$
for some $x_0\in \R^d$ with $|x_0|<\epsilon$. 
\end{dfn}
In other words, the flow separates points that are not already small 
translates of one another, even if you allow a time change between how the
flow acts on $\T$ and how it acts on $\T'$. 

The stretched solenoid of theorem \ref{can-expand} was expansive but not
strongly expansive. This distinction between regular and strong expansivity
is essential for ILC tiling spaces, but unnecessary for FLC tiling
spaces.

\begin{thm}\label{same_for_FLC} 
Strong expansivity implies expansivity. For FLC tiling spaces,
expansivity implies strong expansivity. 
\end{thm}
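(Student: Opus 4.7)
The forward direction is immediate: given strong expansivity with constant $\epsilon$, apply the hypothesis with $h$ equal to the identity map of $\R^d$, which satisfies $h(0)=0$. The hypothesis $d(\T-x,\T'-h(x))<\epsilon$ for all $x$ then reads $d(\T-x,\T'-x)<\epsilon$, and the conclusion $\T=\T'-x_0$ with $|x_0|<\epsilon$ is exactly the conclusion of expansivity (taking $y=-x_0$).

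For the converse, let $\Omega$ be FLC with expansivity constant $\epsilon_0$. FLC provides two positive separations: (a) a constant $\eta_1>0$ such that any two tiles within $\eta_1$ in the tile metric must share a label and have control points within $\eta_1$ of each other; and (b) a constant $\eta_2>0$ such that, for each pair of tile labels, the finite set of relative control-point offsets realized by admissible 2-tile patches of that type is $\eta_2$-separated. Let $r>0$ be a lower bound on tile inradii, and set $\epsilon_1=\min(\epsilon_0,\eta_1,\eta_2/3,r/2)$. I claim that strong expansivity holds with constant $\epsilon_1$.

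Assume $h$ is a homeomorphism of $\R^d$ with $h(0)=0$ and $d(\T-x,\T'-h(x))<\epsilon_1$ for every $x\in\R^d$. For $x$ in the interior of a tile $t$ of $\T$ whose image $h(x)$ lies in the interior of a tile $t'$ of $\T'$, the tiles $t-x$ and $t'-h(x)$ lie within $\epsilon_1$ in the tile metric; by (a) they share a label, and comparing their control points yields $|h(x)-x-(c(t')-c(t))|<\epsilon_1$, where $c(\cdot)$ denotes the control point. The inradius bound $r/2$ prevents $h$ from sending two adjacent tiles of $\T$ into a single tile of $\T'$, so $h$ induces a combinatorial, adjacency-preserving correspondence from tiles of $\T$ to tiles of $\T'$. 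Continuity of $h$ then forces the displacement $c(t')-c(t)$ to vary by less than $2\epsilon_1<\eta_2$ between any two adjacent tile pairs, so by (b) it is actually constant on adjacent pairs; by connectedness of $\T$, it equals a single vector $\delta\in\R^d$, and $\T'=\T+\delta$. Evaluating the pointwise estimate at $x=0$ with $h(0)=0$ gives $|\delta|<\epsilon_1$, so $x_0=\delta$ satisfies $\T=\T'-x_0$ with $|x_0|<\epsilon_1$.

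The main obstacle is the rigidity step in the previous paragraph: demonstrating that the only homeomorphisms $h$ compatible with the pointwise closeness condition effectively implement a translation of $\T$ onto $\T'$, rather than twisting or shuffling tiles in ways incompatible with a single global offset. This rigidity relies essentially on FLC through both the $\eta_2$-separation of 2-tile patch offsets and the inradius lower bound, and it breaks down for general ILC tiling spaces---consistent with the stretched solenoid of Theorem~\ref{can-expand}, which is expansive but not strongly expansive.
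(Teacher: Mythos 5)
Your forward direction is identical to the paper's: take $h=\mathrm{id}$. For the converse your route is genuinely different in structure, though it exploits the same underlying mechanism. The paper fixes the expansivity constant $\epsilon$, proves by induction on concentric balls of radius $3k/\epsilon$ (via an intermediate-value argument on the continuous function $x\mapsto h(x)-x$, which starts at $0$ and cannot jump across the gap between ``offset $<2\epsilon/3$'' and ``offset at least the minimal separation between distinct 2-tile patches'') that $|h(x)-x|<2\epsilon/3$ everywhere, and then hands the conclusion back to the assumed expansivity. You instead build a tile-by-tile correspondence $t\mapsto t'=t+\delta_t$, show $\delta_t$ is locally constant by combining continuity of $h$ with the discreteness of realized offsets, and conclude by connectedness --- never invoking the expansivity hypothesis at the end. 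That buys a slightly stronger statement (FLC alone forces the strong-expansivity conclusion, consistent with the paper's remark that FLC spaces are automatically expansive) at the cost of more bookkeeping about the correspondence.

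One step needs repair as written: when you apply the $\eta_2$-separation of property (b) to conclude $\delta_{t_1}=\delta_{t_2}$ for adjacent $t_1,t_2$, you need $c(t_2')-c(t_1')$ to be a realized offset of an \emph{admissible 2-tile patch}, i.e.\ you need $t_1'$ and $t_2'$ to still have intersecting supports in $\T'$; if $\delta_{t_1}\ne\delta_{t_2}$ they may have pulled apart, and (b) as stated says nothing about them. The fix is standard: replace (b) by the FLC fact that the set of relative control-point offsets of \emph{pairs of tiles at distance at most $R$} (for fixed $R$ exceeding twice the maximal tile diameter) is finite, hence $\eta_2$-separated; both $c(t_2)-c(t_1)$ and $c(t_2')-c(t_1')$ lie in that set and differ by at most $2\epsilon_1$, forcing equality. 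With that adjustment, and routine care that the corresponding tile $t'$ is the one supplied by the tiling-metric patch matching (your inradius bound does guarantee its uniqueness and local constancy in $x$), the argument goes through.
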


\begin{proof} Expansivity is a special case of strong expansivity, where
we restrict the homeomorphism $h$ to be the identity. For the partial 
converse, suppose that $\Omega$ is an FLC tiling space with an expansive 
flow with
constant $\epsilon$. Without loss of generality, we can assume that
$\epsilon$ is much smaller than the size of any tile, and is smaller
than the distance between any distinct connected 2-tile patches. 
We will show that $\Omega$ is strongly expansive with constant $\epsilon/3$. 

Suppose that we have tilings $\T$ and $\T'$ and a self-homeomorphism $h$ 
of $\R^d$ such that $h(0)=0$ and $d(\T-x, \T'-h(x)) < \epsilon/3$ for all
$x$. Then we claim that $|x-h(x)| < 2\epsilon/3$ for all $x$, and hence
that $d(\T-x, \T'-x) < \epsilon/3 + 2\epsilon/3=\epsilon$, implying
that $\T'$ is a translate of $\T$ by less than $\epsilon$. 
Since the tiling metric on any small
piece of a translational orbit is the same as the Euclidean metric, and
since $d(\T,\T')<\epsilon/3$, $\T'$ is a translate by less than $\epsilon/3$.
 
To prove the claim, suppose that there is an $x$ with $|x|<3/\epsilon$
such that $|h(x)-x|\ge 2\epsilon/3$. By continuity, we can find such
an $x$ with $|h(x)-x| = 2\epsilon/3$. However, since
$d(\T,\T')<\epsilon/3$, the pattern of tiles in $\T$ and $\T'$ is
exactly the same out to distance $3/\epsilon$ (up to an overall
translation of less than $\epsilon/3$). Since the local neighborhoods
of $\T-x$ and $\T'-x$ agree to within an $\epsilon/3$ translation,
and since $\T'-x$ and $\T'-h(x)$ disagree by at least $2\epsilon/3$,
the tiles near the origin of $\T-x$ and $\T'-h(x)$ are offset by at
least $\epsilon/3$ (and by much less than the spacing between tiles) 
which is a contradiction. 
This is the base case of an inductive argument.  

Next, assuming there are no points with $|x|<3k/\epsilon$ where
$|x-h(x)|\ge 2 \epsilon/3$, we show that there are no points with
$|x|<3(k+1)/\epsilon$ with $|x-h(x)|\ge 2 \epsilon/3$. If such an $x$
exists, take $y=kx/(k+1)$, so $|y|<3k/\epsilon$ and $|x-y|<3/\epsilon$.
Since $d(\T-y, \T'-h(y))<\epsilon/3$ and $|y-h(y)|$
is small, the pattern of tiles in $\T$ and $\T'$ are the same out to
distance $3/\epsilon$ from $y$. Repeating the argument of the previous
paragraph shows that $x$ cannot exist.
\end{proof}

\begin{thm} \label{strong_invariant}
If $\Omega$ is a strongly expansive tiling space and $\Omega'$
is a tiling space homeomorphic to $\Omega$, with the homeomorphism
sending translational orbits to translational orbits,
then $\Omega'$ is strongly expansive. 
\end{thm}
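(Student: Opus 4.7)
\emph{Plan.} Let $\phi:\Omega\to\Omega'$ be the orbit-preserving homeomorphism and let $\epsilon>0$ be a strong-expansivity constant for $\Omega$. The strategy is to take an arbitrary candidate ``witness'' to a potential failure of strong expansivity in $\Omega'$, pull it back through $\phi$ to produce a witness in $\Omega$, invoke the strong expansivity of $\Omega$, and push the resulting conclusion forward, extracting a strong-expansivity constant $\epsilon''>0$ for $\Omega'$.

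\emph{Orbit reparametrizations and uniform continuity.} Because $\phi$ sends each $\R^d$-orbit onto an $\R^d$-orbit, for each $\T\in\Omega$ there is a continuous map $f_\T:\R^d\to\R^d$ with $f_\T(0)=0$ and $\phi(\T-x)=\phi(\T)-f_\T(x)$; the corresponding maps constructed from $\phi^{-1}$ serve as two-sided inverses, so each $f_\T$ is a self-homeomorphism of $\R^d$. (On a periodic orbit this is only well-defined modulo the stabilizer; since the argument only ever uses $f_\T$ at specific points, pick any measurable selection.) Since $\Omega$ and $\Omega'$ are compact, $\phi$, $\phi^{-1}$, and the translation actions are uniformly continuous, and hence the family $\{f_\T\}_{\T\in\Omega}$ has a uniform modulus of continuity at the origin: for every $\eta>0$ there is $\eta'>0$ such that $|x|<\eta'$ implies $|f_\T(x)|<\eta$ for every $\T$, and likewise for $\{f_\T^{-1}\}$.

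\emph{The main argument.} Fix $\epsilon''>0$, to be chosen, and suppose that $\T'_1,\T'_2\in\Omega'$ and a self-homeomorphism $h'$ of $\R^d$ with $h'(0)=0$ satisfy $d(\T'_1-x',\T'_2-h'(x'))<\epsilon''$ for all $x'$. Put $\T_i:=\phi^{-1}(\T'_i)$ and $h:=f_{\T_2}^{-1}\circ h'\circ f_{\T_1}$; then $h$ is a self-homeomorphism of $\R^d$ fixing $0$. Substituting $x'=f_{\T_1}(x)$, which ranges over all of $\R^d$, and using uniform continuity of $\phi^{-1}$ converts the hypothesis into $d(\T_1-x,\T_2-h(x))<\epsilon$ for every $x$, provided $\epsilon''$ was chosen small enough. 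Strong expansivity of $\Omega$ then produces $y_0\in\R^d$ with $|y_0|<\epsilon$ and $\T_2=\T_1-y_0$; pushing forward, $\T'_2=\T'_1-f_{\T_1}(y_0)$, so $\T'_2$ is a translate of $\T'_1$.

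\emph{Main obstacle.} The delicate point is the quantitative bound $|f_{\T_1}(y_0)|<\epsilon''$, without which one only concludes expansivity, not strong expansivity, for $\Omega'$. The raw bound $|y_0|<\epsilon$ is too coarse because $f_{\T_1}$ may stretch. The fix is to re-use the $x=0$ instance of the hypothesis, $d(\T'_1,\T'_2)<\epsilon''$, together with the local product structure of a tiling space: every $\T'\in\Omega'$ has a neighborhood metrically comparable to the product of a small $\R^d$-disk with a piece of transversal, of size uniform in $\T'$ by compactness of $\Omega'$. Since $\T'_2=\T'_1-f_{\T_1}(y_0)$ lies in the same orbit as $\T'_1$, in the local chart at $\T'_1$ it occupies the ``horizontal'' slice through $\T'_1$ at parameter $f_{\T_1}(y_0)$, and the metric hypothesis forces $|f_{\T_1}(y_0)|<\epsilon''$ once $\epsilon''$ is less than the uniform chart radius. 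Establishing that uniform chart radius --- a uniform injectivity radius for the orbit map, ruling out near-periodicity at arbitrarily small scales --- is the main geometric ingredient in the ILC setting.
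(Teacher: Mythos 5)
Your main argument is the same as the paper's: the paper also factors the hypothesis through the orbit reparametrizations induced by the homeomorphism (its $\gamma_1,\gamma_2$ are your $f_{\T_1},f_{\T_2}$), conjugates $h'$ to $h=\gamma_2\circ h'\circ\gamma_1^{-1}$, uses uniform continuity of the homeomorphism to convert $\epsilon''$-shadowing in $\Omega'$ into $\epsilon$-shadowing in $\Omega$, and invokes strong expansivity upstairs; the paper merely phrases this as a contraposition. Where you diverge is in the endgame, and there your proposed fix does not work as stated. There is no ``uniform chart radius'' $r$ such that two tilings on the same orbit at $d$-distance less than $r$ must differ by a translation of norm less than $r$: in the dyadic solenoid one has $d(\T,\T-2^n)\to 0$ while $\T-2^n$ is not equal to any small translate of $\T$, and nothing in the compactness of a general ILC tiling space rules out such near-periodicity at arbitrarily small metric scales. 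Fortunately the fix is also unnecessary: you already established a uniform modulus of continuity for the family $\{f_\T\}$, so $|y_0|<\epsilon$ gives $|f_{\T_1}(y_0)|<C(\epsilon)$ for a fixed constant depending only on $\epsilon$ and the homeomorphism, which is exactly the ``small translate'' conclusion the theorem is after (and is all the paper's own proof delivers --- it passes from ``$\T_1,\T_2$ are small translates'' to ``$\T'_1,\T'_2$ are small translates'' using precisely this uniform continuity, without comment). If one reads the definition of strong expansivity with pedantic literalness, so that the translate must be bounded by the same $\epsilon''$ used in the hypothesis, then neither your argument nor the paper's closes that last quantitative gap; that is an artifact of the definition rather than a defect specific to your proof, and you should either note that the expansivity constant in the conclusion may be taken different from (and larger than) the one in the hypothesis, or simply drop the ``uniform injectivity radius'' detour and cite the modulus of continuity you already have.
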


\begin{proof} Suppose that $\Omega$ is strongly expansive with
constant $\epsilon$, that $\Omega'$ is not strongly expansive, and 
suppose that $f: \Omega' \to \Omega$ is a homeomorphism that preserves
orbits. Since $f$ is uniformly continuous, there is a $\delta$ such that
any two points within $\delta$ in $\Omega'$ are mapped to points within
$\epsilon$ in $\Omega$. We can find a pair of tilings $\T'_1, \T'_2 \in
\Omega'$, not small translates of one another, 
and an $h': \R^d \to \R^d$ such that $d(\T'_1-x, \T'_2-h'(x))<\delta$
for all $x$. Let $\T_1=f(\T'_1)$ and $\T_2=f(\T'_2)$. 
Since $f$ maps orbits to orbits, there is a homeomorphism
$\gamma_{1,2}: \R^d \to \R^d$ such that $f(\T'_1-x)=\T_1-\gamma_1(x)$ and 
$f(\T'_2-x)=\T_2-\gamma_2(x)$. Note that $\gamma_{1,2}(0)=0$. 
Since $d(\T'_1-x,\T'_2-h'(x))<\delta$, 
$d(\T_1-\gamma_1(x), \T_2-\gamma_2\circ h'(x))<\epsilon$. Taking 
$y=\gamma_1(x)$
and $h(y)=\gamma_2\circ h' \circ \gamma_1^{-1}$, we have that 
$d(\T_1-y, \T_2-h(y))<\epsilon$, which implies that $\T_1$ and $\T_2$ are small
translates of one another, which implies that $\T_1'$ and $\T_2'$ are small
translates of one another, which is a contradiction. 
\end{proof}

\begin{lem}\label{strong-expansive-first-return}
A 1-dimensional tiling space is strongly expansive if and 
only if the first return map on the 
transversal is expansive.
\end{lem}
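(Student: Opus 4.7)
\emph{Proof sketch.} The strategy is to translate the strong-expansivity condition on the $\R$-flow into the expansivity of the first return map $R$ on the transversal $\Xi$, by using the one-dimensional structure to force any qualifying homeomorphism $h$ of $\R$ to match the control points of $\T$ with those of $\T'$. Since the prototile set $\ppp_0$ is compact and each support is a connected closed interval, there is a uniform lower bound $\ell > 0$ on tile lengths, which will be used throughout.

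For the implication (first-return expansive $\Rightarrow$ strongly expansive), fix $\delta > 0$ witnessing expansivity of $R$, and choose $\epsilon < \min(\delta/3,\ell/3)$. Suppose $\T, \T' \in \Omega$ and $h$ is a homeomorphism of $\R$ with $h(0)=0$ such that $d(\T-x,\T'-h(x)) < \epsilon$ for all $x$; the orientation-reversing case is ruled out at large $|x|$ by the same comparison applied below, so we take $h$ orientation-preserving. Let $\{c_n\}$ and $\{d_n\}$ be the control-point sequences of $\T$ and $\T'$, indexed so that $c_0, d_0$ are nearest the origin. The hypothesis at $x = 0$ combined with $\epsilon < \ell/3$ forces $|c_0 - d_0| < \epsilon$, and inducting on $|n|$ using the hypothesis at $x = c_n$ forces $|h(c_n) - d_n| < \epsilon$ for every $n \in \Z$ (monotonicity of $h$ and $\epsilon < \ell/3$ prevent control points from being skipped or duplicated). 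Setting $S = \T - c_0 \in \Xi$ and $S' = \T' - d_0 \in \Xi$, a triangle inequality gives
\[
d(R^n S, R^n S') \leq d(\T-c_n, \T'-h(c_n)) + d(\T'-h(c_n), \T'-d_n) < 2\epsilon < \delta
\]
for every $n \in \Z$. Expansivity of $R$ then yields $S = S'$, hence $\T = \T' + (c_0-d_0)$ with $|c_0-d_0| < \epsilon$, verifying strong expansivity.

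For the converse, suppose $R$ is not expansive. Given any target $\epsilon > 0$ we choose $\delta > 0$ small and pick distinct $\T, \T' \in \Xi$ with $d(R^n \T, R^n \T') < \delta$ for all $n \in \Z$; let $\{c_n\},\{d_n\}$ be the control-point sequences. Because the $0$-th tiles of $R^n\T$ and $R^n\T'$ are $\delta$-close, their lengths differ by at most $\delta$, so the piecewise-linear orientation-preserving homeomorphism $h:\R\to\R$ with $h(c_n)=d_n$ has slopes in $[1-\delta/\ell, 1+\delta/\ell]$. For $x \in [c_n, c_{n+1}]$, both $\T-x$ and $\T'-h(x)$ are translates of $R^n\T$ and $R^n\T'$ by comparable small amounts, and a triangle-inequality comparison shows $d(\T-x, \T'-h(x)) < \epsilon$ uniformly in $x$ once $\delta$ is small enough. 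Since $\T \neq \T'$ both lie in $\Xi$, any translation carrying one to the other has magnitude at least $\ell > \epsilon$, so this family contradicts strong expansivity.

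The main obstacle I expect is the inductive matching $h(c_n) \approx d_n$ in the easy direction: one must verify that $h$ cannot fail the bijective correspondence between the two control-point sequences. This is where the uniform lower bound $\ell$ on tile lengths is essential, combined with the monotonicity automatically available in one dimension. Once the matching is in place, the rest is routine bookkeeping between the tiling metric on $\Omega$ and the discrete dynamics of $R$ on $\Xi$.
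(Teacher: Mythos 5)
Your proof is correct and follows essentially the same route as the paper: the converse direction builds the same piecewise-linear time change $h$ interpolating between corresponding control points of two tilings whose first-return orbits stay close, and the forward direction uses the same observation that closeness of $\T-x$ and $\T'-h(x)$ forces the $n$th tile of $\T$ to line up with the $n$th tile of $\T'$, so that first-return orbits remain close. You supply somewhat more bookkeeping (the uniform lower bound $\ell$ on tile lengths and the explicit induction on control points) than the paper's sketch, but the underlying argument is the same.
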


\begin{proof} If the first return map is not expansive, then one can find
arbitrarily close tilings $\T, \T'$ whose orbits on the first return map
remain close. The type of the $n$th tile of $\T$ encountered under 
translation must therefore be close
to the tile type of $\T'$, and so the length of each tile in $\T$ must
be close to the length of the corresponding tile in $\T'$. By taking $h(x)$
to increase by the length of a tile in $\T'$ when $x$ increases by the length
of a corresponding tile in $\T$, and by keeping the derivative 
$h'(x)$ constant on each such interval, we ensure that $\T-x$ remains close
to $\T'-h(x)$ for all $x$. 

Conversely, if the first return map is expansive, then any two tilings 
eventually have substantially different sequences of tiles. If $\T-x$
and $\T'-h(x)$ remain close, then, as $x$ increases, the number of vertices
that cross the origin will be the same for $\T$ and $\T'$, since if there
is a vertex at the origin in $\T-x$, then there must be a vertex within 
$\epsilon$ of the origin in $\T'-h(x)$ (and vice-versa). Thus the $n$th tile
of $\T$ must line up with the $n$th tile of $\T'$. But these are eventually
different by more than $\epsilon$.
\end{proof}

\begin{cor}A 1-dimensional tiling space is homeomorphic to a 1-dimensional
FLC tiling space if and only if it is strongly expansive and has 
totally disconnected transversal.
\end{cor}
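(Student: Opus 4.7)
The plan is to derive both directions of the corollary directly from the preceding lemmas and theorems, which do essentially all of the work.

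For the ``if'' direction, suppose $\Omega$ is 1-dimensional, strongly expansive, and has totally disconnected transversal. First I would apply Lemma \ref{strong-expansive-first-return} to conclude that the first return map on the transversal of $\Omega$ is expansive. Then Lemma \ref{1D_cannot_expand} immediately produces the desired homeomorphism with a 1-dimensional FLC tiling space. This direction is essentially free.

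For the ``only if'' direction, assume $\Omega$ is homeomorphic to a 1-dimensional FLC tiling space $\Omega'$, via some homeomorphism $f: \Omega \to \Omega'$. FLC tiling spaces are well known to have Cantor (hence totally disconnected) transversals and expansive translational dynamics; these facts are stated explicitly earlier in the paper. Theorem \ref{same_for_FLC} upgrades the expansivity of $\Omega'$ to strong expansivity. Then Lemma \ref{transversal_topology_invariant} transfers total disconnectivity of the transversal back to $\Omega$, and Theorem \ref{strong_invariant} transfers strong expansivity to $\Omega$.

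The only delicate point, and what I expect to be the main (small) obstacle, is that Theorem \ref{strong_invariant} requires the homeomorphism $f$ to carry translational orbits to translational orbits. I would dispose of this by the following topological remark: in a 1-dimensional tiling space, a small neighborhood of any tiling is homeomorphic to the product of an open interval in $\R$ with the (totally disconnected) transversal, so the path components of $\Omega$ and $\Omega'$ are exactly their translational orbits. Since homeomorphisms always preserve path components, $f$ automatically sends orbits to orbits, and Theorem \ref{strong_invariant} applies, completing the proof.
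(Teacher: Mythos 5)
Your proposal is correct and follows essentially the same route as the paper: the ``if'' direction via Lemmas \ref{strong-expansive-first-return} and \ref{1D_cannot_expand}, and the ``only if'' direction via Lemma \ref{transversal_topology_invariant} and Theorem \ref{strong_invariant}, using the fact that path components are translational orbits to see that the homeomorphism preserves orbits. You are in fact slightly more explicit than the paper in invoking Theorem \ref{same_for_FLC} to upgrade the FLC space's expansivity to strong expansivity before transferring it, which the paper leaves implicit.
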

 
 \begin{proof}
   Suppose that $\Omega$ is a one-dimensional tiling space
   homeomorphic to an FLC tiling space $\Omega'$.  Then since
   $\Omega'$ has a totally disconnected transversal, so must $\Omega$
   by Lemma \ref{transversal_topology_invariant}.  Moreover, since the
   homeomorphism must preserve path components and those are the
   translational orbits, by Theorem \ref{strong_invariant}, $\Omega$
   must be strongly expansive.
 
   On the other hand suppose $\Omega$ is strongly expansive and has a
   totally disconnected transversal.  The result follows from Lemmas
   \ref{1D_cannot_expand} and \ref{strong-expansive-first-return}. 
 \end{proof}

A natural conjecture is that higher-dimensional tiling spaces are homeomorphic
to FLC tiling spaces if and only if they are strongly expansive and have 
totally disconnected transversals. We believe this conjecture to be false,
and we present the tiling of subsection
\ref{counter-ex} as a likely counterexample. However, if the geometry of 
the tiles can be controlled, then the result is true.

\begin{thm} \label{ILChomeoFLCcond}
Let $\Omega$ be a tiling space with totally disconnected 
transversal and with strongly expansive translational dynamics. If $\Omega$
is homeomorphic to a tiling space $\Omega'$ that has a finite number of
shapes and sizes of 2-tile patches (albeit possibly 
an infinite number of tile labels), then $\Omega$ is homeomorphic to
an FLC tiling space. 
\end{thm}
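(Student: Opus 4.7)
First, I would transfer the two key properties to $\Omega'$. Assuming the homeomorphism $\Omega \cong \Omega'$ sends translational orbits to translational orbits (as is used elsewhere in this section), Lemma~\ref{transversal_topology_invariant} gives that the canonical transversal $\Xi'$ of $\Omega'$ is totally disconnected, and Theorem~\ref{strong_invariant} gives that $\Omega'$ is strongly expansive with some constant $\epsilon > 0$. Because $\Omega'$ has only finitely many shapes and sizes of 2-tile patches, it has only finitely many tile shapes, so all tile diameters are bounded above and below by uniform positive constants.

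Next, I would partition $\Xi'$ and use the partition to build a candidate FLC tiling space. The transversal $\Xi'$ decomposes into finitely many clopen pieces indexed by the (finitely many) possible shapes of the tile at the origin; within each piece, compactness and total disconnectedness let me refine into finitely many clopen subsets of diameter less than $\epsilon/3$. Combining yields a finite clopen partition $\Xi' = C_1 \sqcup \cdots \sqcup C_m$ in which each $C_i$ corresponds to a single tile shape. Introduce a finite prototile set $\ppp'' = \{\tau_1,\ldots,\tau_m\}$, with $\mathrm{supp}(\tau_i)$ equal to the shape common to origin-tiles of tilings in $C_i$. For each $\T' \in \Omega'$ and each tile $t \in \T'$ with control point $c(t)$, the translate $\T' - c(t)$ lies in exactly one $C_i$; replacing $t$ by a copy of $\tau_i$ at $c(t)$, done at every tile of $\T'$, produces a tiling $f(\T')$ using only the prototiles in $\ppp''$. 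Since $f$ commutes with translation, $\Omega'' := f(\Omega')$ is closed and translation-invariant, hence an FLC tiling space, and $f$ is continuous because clopen membership is locally constant.

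Finally, I would argue injectivity of $f$, which is the main obstacle. Suppose $f(\T'_1) = f(\T'_2)$. Then $\T'_1$ and $\T'_2$ have identical tile supports, and at each common tile center $x$ the translates $\T'_1 - x$ and $\T'_2 - x$ lie in the same $C_i$, so $d(\T'_1 - x, \T'_2 - x) < \epsilon/3$. The uniform lower bound on tile diameters ensures every $y \in \R^d$ is within a bounded distance of some tile center; combined with the translation-continuity of the tiling metric, this upgrades the estimate at tile centers to $d(\T'_1 - y, \T'_2 - y) < \epsilon$ for all $y \in \R^d$, provided the $C_i$ were chosen small enough. Strong expansivity applied with $h = \mathrm{id}$ (which is just ordinary expansivity) then forces $\T'_2 = \T'_1 - v$ for some small $v$; but $\T'_1$ and $\T'_2$ share the same tile positions, so $v = 0$ and $\T'_1 = \T'_2$. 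A continuous bijection between compact Hausdorff spaces is a homeomorphism, so $f\colon\Omega' \to \Omega''$ is a homeomorphism, and composing with $\Omega \cong \Omega'$ yields the desired $\Omega \cong \Omega''$. The delicate step is converting the clopen-containment bound at tile centers into the uniform $\R^d$-wide estimate for expansivity; this requires careful use of the uniform tile-size bounds to control how much the tiling metric can change under translations between nearby tile centers.
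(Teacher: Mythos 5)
Your proof is essentially correct, but it takes a genuinely different route from the paper's. The paper never relabels tiles directly: it first applies the Voronoi trick to get polyhedral tiles meeting full-face to full-face (a topological conjugacy), then uses the shape-deformation machinery of \cite{SW} --- which needs only the finiteness of the geometric data, not of the labels --- to push all vertices onto an integer lattice, realizing the deformed space $\Omega''$ as the suspension of a $\Z^d$ action on a totally disconnected fiber over the torus; strong expansivity survives this (non-conjugate) homeomorphism, and standard results on expansive $\Z^d$ actions on Cantor sets then identify $\Omega''$ with the suspension of a subshift. You instead generalize the paper's own one-dimensional argument (Lemma \ref{1D_cannot_expand}): partition the totally disconnected transversal into finitely many small clopen sets, relabel each tile by the clopen class of the translate centering its control point, and use expansivity to show the relabeling map is injective, hence a topological conjugacy onto an FLC space. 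Your version is more self-contained (no appeal to \cite{SW} or to the Cantor-action literature), produces a conjugacy $\Omega' \cong \Omega''$ rather than merely a homeomorphism, and isolates exactly where each hypothesis enters: strong expansivity is needed only to cross the homeomorphism from $\Omega$ to $\Omega'$, after which ordinary expansivity suffices, while the geometric finiteness is what makes the relabeled space FLC. The paper's version, by contrast, plugs the result into the broader suspension/Cantor-bundle framework. A few details in your argument deserve explicit care: you should normalize control points so that same-shape prototiles share one (otherwise ``the shape common to origin-tiles of tilings in $C_i$'' is not a single positioned set); local constancy of clopen membership should be justified by the positive separation between the finitely many clopen pieces; and in the endgame, expansivity gives $\T'_2 = \T'_1 - v$ with $|v| < \epsilon$, and $v=0$ follows because a nonzero translation smaller than the tile inradius cannot preserve the common set of tile supports. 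These are all routine, and your flagged concern about upgrading the estimate from tile centers to all of $\R^d$ is handled exactly as you indicate, by shrinking the clopen diameters relative to $\epsilon$ and the uniform tile-diameter bound.
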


\begin{proof}
We will construct a homeomorphism from  $\Omega'$ (and hence from $\Omega$)
into a tiling space $\Omega''$ that is a suspension of a $\Z^d$ action on a 
totally disconnected space. 
Since strong expansivity is preserved by homeomorphisms, $\Omega''$ 
has strongly expansive dynamics, and hence has expansive dynamics. 
Standard results about Cantor maps then imply that $\Omega''$ is 
topologically conjugate to the suspension of a subshift, and hence to
an FLC tiling space. 

We first convert $\Omega'$ to a tiling space with a finite set of 
polyhedral tile shapes whose tiles meet full-face to full-face using Vorono\"{\i} cells as follows.
 Assign a point to the interior of each
prototile, such that prototiles of the same shape have the same marked point.
Instead of having a collection of labeled tiles, we then have a collection
of labeled points. Then associate each marked point to its Vorono\"{\i} cell ---
the set of points that are at least as close to that marked point as to
any other marked point.  This operation is a topological conjugacy. 

Next we apply the constructions of \cite{SW} to our tiling, noting
that the arguments of \cite{SW} only used the finiteness of the geometric
data associated with a tiling, and not the finiteness of the tile labels. 
We deform the sizes of each geometric class of tile such that the 
displacement between any two vertices is a vector with rational entries.
By rescaling, we can then assume that the relative position of any two
vertices is an integer vector. These shape deformations and rescalings are 
not topological conjugacies, but they do induce 
a homeomorphism of $\Omega'$ to the space $\Omega''$ of tilings by the 
deformed and rescaled tiles. 
There is then a natural map from $\Omega''$ to the $d$-torus,
associating to any tiling the coordinates of all of its vertices mod 1. 
Our $\R^d$ action on $\Omega''$ is then the suspension of a $\Z^d$ action 
on the fiber over any point of the torus. 
\end{proof}

\section{Invariant measures}\label{sec-measures}

The frequencies of patches in FLC tilings can be computed from translation-invariant Borel
probability measures.  There
are only countably many possible patches, and it is possible to assign
a nonnegative frequency to each one.  When the system has infinite
local complexity the space of patches can be uncountable. Frequencies
should then be viewed not as numbers but as measures on appropriate
spaces of patches.  We consider the general case first before
proceeding to fusion tilings.

\subsection{Measure and frequency for arbitrary ILC and FLC tilings.}
Consider any compact translation-invariant tiling space $\Omega$ with
a translation-invariant Borel probability measure $\mu$.  Whether or not
$\Omega$ has FLC, the measure
$\mu$ provides a reasonable notion of patch frequency that we now describe.
Let $\tilde \fff_n$ be the space of all connected $n$-tile
patches that appear in $\Omega$, modulo translation. $\tilde \fff_n$ 
inherits a $\sigma$-algebra of measurable sets from $\ppp_0$.
\footnote{A patch with
  $n$ tiles is described by specifying $n$ labels and $n$ locations,
  and each patch can be so described in $n!$ ways, one for each
  ordering of the tiles. Thus, $\tilde \fff_n$ is a subset of
  $[(\ppp_0)^n\times \R^{(n-1)d}]/S_n$, where $S_n$ is the group of
  permutations of the $n$ tiles.}

\begin{dfn}
Let $\tilde \fff_\infty = \cup_n \tilde \fff_n$ be the space of all
finite patches. A set $I \subset \fff_\infty$ of patches is called {\em
  trim} if, for some open set $U \subset \R^d$ and for every $\T \in
\Omega$, there is at most one pair $(P,\vecv) \in I \times U$ such
that $\T -\vecv$ contains the patch $P$. For each $U \subset \R^d$,
define the {\em cylinder set}
$$ 
\Omega_{I,U} = \{ \T \in \Omega \text{ such that there exists } 
(P,\vecv) \in I \times U  \text{ with } P \in \T-\vecv.\}
$$
The property of being trim, together with translation invariance,
implies that $\mu(\Omega_{I,U})$ is proportional to $Vol(U)$ for all
sufficiently small open sets $U$. The {\em abstract
  frequency} of a trim set $I$ of patches is
\begin{equation}
freq_\mu(I) = \mu(\Omega_{I,U}) / Vol(U),
\end{equation}
for any open set $U$ sufficiently small that each tiling is contained in
$\Omega_{I,U}$ in at most one way.  
\end{dfn}

This use of the word ``frequency" is justified by the ergodic theorem
as follows.  Let $\chi_{I,U}$ be the indicator function for
$\Omega_{I,U}$ and let $B_R(0)$ be the ball centered at
the origin of radius $R$. For $\mu$-almost every $\T
\in \Omega$ can define 
$$freq_{\mu,\T}(I) Vol(U) = \lim_{R \to \infty} \frac{1}{Vol(B_R(0))} \int_{\vecv \in B_R(0)} \chi_{I,U} 
(\T - \vecv) d\lambda,$$
where $\lambda$ is Lebesgue measure on $\R^d$.
Notice that the integral counts $Vol(U)$ each time an element of $I$ is in
$B_R(0) \cap \T$, up to small boundary effects.  Thus the
ergodic average is the average number of occurrences of $I$ per unit
area and represents a na\"{\i}ve  notion of frequency.  If $\mu$ is ergodic, then $freq_{\mu,\T}(I) = freq_\mu(I)$; if not, we
must integrate $freq_{\mu,\T}(I)$ over all $\T \in \Omega$ to get the true frequency $freq_\mu(I)$.

Since a subset of a trim set is trim, $freq_\mu$ can be applied to 
all (measurable) subsets of $I$, and is countably additive on such 
subsets. This follows from the countable additivity of $\mu$, and the 
fact that, for small $U$, 
the cylinder sets based on disjoint subsets of $I$ are disjoint. 
As a result, we can view $freq_\mu$ as a measure on any trim subset
of $\tilde \fff_\infty$. 

However, $freq_\mu$ should not be viewed as a measure on {\em all} of
$\tilde \fff_\infty$, since $\tilde \fff_\infty$ itself is not trim.  If
$I_1$ and $I_2$ are trim and disjoint but $I_1 \cup I_2$ is not
trim, then $\Omega_{I_1,U}$ and $\Omega_{I_2,U}$ are not disjoint
and in general $\mu(\Omega_{I_1 \cup I_2,U}) \ne \mu(\Omega_{I_1,U}) +
\mu(\Omega_{I_2,U})$. If we defined $freq_\mu(I_1 \cup I_2)$ to be
$\mu(\Omega_{I_1 \cup I_2, U})/Vol(U)$ anyway, then $freq_\mu$ would
not be additive.

\subsection{Measures for FLC fusion tilings}
We begin our investigation of invariant measures for fusion tilings by
reviewing how they work when the local complexity is finite.
In this case an individual patch $P$ will
usually have nonzero frequency and it suffices to look at
cylinder sets $\Omega_{P,U}$.  When the fusion is van Hove and
recognizable, it is possible to compute the frequency of an arbitrary patch 
from  the frequencies of high-order
supertiles, which depend primarily on the transition matrices of the fusion rule.
The reader can refer to \cite{Fusion} to flesh out the sketch we
provide here.

Let $j_n$ denote the number of distinct $n$-supertiles. For each
invariant measure $\mu$, let $\rho_n(i) \ge 0$ be the frequency of the
$i$-th $n$-supertile $P_n(i)$. Defining these frequencies requires
recognizability, so that we can uniquely determine whether a certain
supertile lies in a certain place in a tiling $\T$.  (Strictly
speaking, $\rho_n(i)$ is the sum of the frequencies of a family of
larger patches that consists of all possible extensions of the supertile out
to a specific ball that is larger than the recognizability radius.)

The vectors $\rho_n \in \R^{j_n}$ satisfy the 
{\em volume normalization} condition
\begin{equation}\label{old_volume_normalization}
\sum_{i=1}^{j_n}  \rho_n(i) Vol(P_n(i))
\end{equation}
and the {\em transition consistency} condition
\begin{equation}\label{old_transition_consistency} \rho_n = M_{n,N} \rho_N
\end{equation}
for each $N>n$.  For each patch $P$, 
$$ freq_\mu(P) = \lim_{n \to \infty} \sum_{i=1}^{j_n} \#(P \text{ in } P_n(i))
\rho_n(i),$$
where $\#(P \text{ in } P_n(i))$ denotes the number of patches equivalent to $P$
contained in the supertile $P_n(i)$. 

Since the supertile frequencies determine all the patch frequencies, and
since the patch frequencies are tantamount to the measures on cylinder
sets, an invariant measure on $\Omega$ is equivalent to a sequence of 
vectors $\rho_n$ that are non-negative, volume normalized, and 
transition consistent.    

It is possible to parameterize the space of invariant measures on
$\Omega_\rrr$ because of this result.  Transition consistency means
that $\rho_n$ is a non-negative linear combination of the columns of
$M_{n,N}$. Let $\Delta_{n,N} \subset \R^{j_n}$ be the convex polytope
of all such non-negative linear combinations of the columns of
$M_{n,N}$ that are volume normalized, and let $\Delta_n = \cap_N
\Delta_{n,N}$. The matrix $M_{n,m}$ maps $\Delta_m$ to $\Delta_n$.
The inverse limit $\Delta_\infty$ of these polytopes parametrizes the
invariant measures.  So we can see, for instance, whether or not a
certain fusion system is uniquely ergodic by looking at its transition
matrices.

\subsection{The frequency measure on $\ppp_n$ induced by $\mu$}
\label{frequencies_are_measures}
Now we begin to develop the parallel structures needed to handle the
situation where $\mu$ is a translation-invariant probability measure
on an ILC fusion tiling space.  Since the
space $\ppp_n$ may not be finite, instead of a vector $\rho_n \in
\R^{j_n}$, we have a measure on $\ppp_n$ that represents frequencies
of sets of $n$-supertiles.  Recall that $\lcal_n$, and hence $\ppp_n$,
comes equipped with a $\sigma$-algebra of measurable sets. 
As long as we have exercised reasonable
care in constructing $\ppp_n$ (e.g., if we have chosen our control 
points so that every element of
$\ppp$ contains an $\epsilon$-ball around the origin),
every subset $I\subset \ppp_n$ 
is automatically trim.
Thus for small enough $U$, $\mu(\Omega_{I,U})$ is proportional to
$Vol(U)$, and we define
$$ \rho_n(I) = \frac{\mu(\Omega_{I,U})}{Vol(U)}.$$
The non-negativity and (finite and countable) additivity properties of
$\rho_n$ follow from the corresponding properties of
$\mu$. Furthermore, we will show in Theorem
\ref{measures_are_sequences} that $\rho_n$ satisfies the volume
normalization condition
\begin{equation}
\int_{P \in \ppp_n} Vol(P)  d \rho_n = 1.
\end{equation}

\subsection{Three ways to view $M_{n,N}$}\label{transition.views}

An $n \times N$ matrix can be viewed as a collection of $nN$ numbers,
as an ordered list of $N$ vectors in $\R^n$, or as a linear transformation
from $\R^N$ to $\R^n$. Likewise, $M_{n,N}(P,Q)$ is a number, $M_{n,N}(*,Q)$
is a measure on $\ppp_n$, and $M_{n,N}$ is a linear map from measures on 
$\ppp_N$ to measures on $\ppp_n$. 

For a fixed $N$-supertile $Q$, we can view the $Q$th ``column''
$M_{n,N}(*,Q)$ of $M_{n,N}$ as a measure on $\ppp_n$, which we denote $\zeta_{n,Q}$. 
For any measurable $I \subset \ppp_n$, let
$$\zeta_{n,Q}(I) = \sum_{P \in I} M_{n,N}(P, Q) := \#(I \text{ in } Q),$$ 
the number of $n$-supertiles in $Q$ equivalent to those in $I$.
Although $I$ may be uncountable, there are only finitely many
$n$-supertiles in $Q$, so the sum is guaranteed to be finite. 
Likewise, for any measurable function $f$ on $\ppp_n$, and for fixed $Q$,  
\begin{equation}
\int_{P\in \ppp_n} f(P) d \zeta_{n,Q} = \sum_{P\in \ppp_n}  f(P) M_{n,N}(P,Q).
\label{integratetransition}
\end{equation}

As a linear transformation, $M_{n,N}$ maps measures on
$\ppp_N$ to measures on $\ppp_n$.  Consider a measure $\nu_N$ on $\ppp_N$. 
Define $M_{n,N}\nu_N$ on any measurable subset $I$ of $\ppp_n$ to be
\begin{equation}
 (M_{n,N} \nu_N)(I) = \int_{Q\in \ppp_N} M_{n,N}(I,Q) d \nu_N= \int_{Q \in \ppp_N} \sum_{P \in I}
M_{n,N}(P,Q) d\nu_N= \int_{Q\in \ppp_N} \zeta_{n,Q}(I) d \nu_N.
\label{measuretransitiondef}
\end{equation}
Every measure in the range of $M_{n,N}$ is a sum or integral over 
the frequency measures $\zeta_{n,Q}=M(*,Q)$ for different $Q$'s, just as every vector in the
range of a matrix is a linear combination of the columns of the matrix.

For brevity, we write $\nu_n$ for $M_{n,N} \nu_N$. 
Computing $\nu_n(I)$ involves  a sum over $P$ and an integral over $Q$.  Likewise, using $\nu_n$
to integrate functions over $\ppp_n$ also involves summing over $P$ and integrating over $Q$, with the 
sum inside the integral.
Specifically,
$$ \int_{P \in \ppp_n}  f(P) d\nu_n=  \int_{Q\in \ppp_N}  \sum_{P \in \ppp_n} f(P) M_{n,N}(P,Q)
 d\nu_N $$

It is straightforward to check that the composition of these
linear transformations is natural:  for $n<m<N$, $M_{n,N} \nu_N
= M_{n,m} (M_{m,N} \nu_N)$. 
Measures $\rho_n$ on $\ppp_n$ and $\rho_N$ on $\ppp_N$ are said to be
transition consistent if 
\begin{equation}\label{new_transition_consistency}
\rho_n = M_{n,N} \rho_N. 
\end{equation}
This has exactly the same form as the transition consistency condition
(\ref{old_transition_consistency}) for FLC fusions, only with the
right hand side now denoting the induced measure rather than simple
matrix multiplication.   We say that a sequence of 
measures $\{\rho_n\}_{n=0}^\infty$ is transition consistent if $\rho_n$ and
$\rho_N$ are transition consistent for all $n < N$.

Note that the measures $\zeta_{n,Q}$ are {\em not} volume normalized, since if $f(P) = Vol(P)$, then
$$\int_{P\in \ppp_n} Vol(P) d\zeta_{n,Q} = \sum_{P \in \ppp_N} Vol(P)
M_{n,N}(P,Q) = Vol(Q).$$  
However, if $\nu_N$ is volume normalized then
so is $\nu_n$, since
$$\int_{P \in \ppp_n} Vol(P) d\nu_n = 
\int_{Q\in \ppp_N}\sum_{P \in \ppp_n} Vol (P) M_{n,N}(P,Q) d\nu_N = \int_{Q \in \ppp_N} 
Vol(Q) d\nu_N = 1.$$

\subsection{Invariant measures for fusion tilings}

Specifying a measure for a tiling space is equivalent to specifying the 
measure for all cylinder sets, which in turn is equivalent to specifying
the (abstract) frequency for all trim families of patches.
For a large class of fusion rules, this can be
reduced to specifying a sequence of volume normalized and transition
consistent measures $\rho_n$ on $\ppp_n$:

\begin{thm}\label{measures_are_sequences}
  Let $\rrr$ be a 
  fusion rule that is van Hove and
  recognizable.  Each translation-invariant Borel probability measure
  $\mu$ on $\Omega_\rrr$ gives rise to a sequence of volume normalized
and transition consistent measures $\{\rho_n\}$ on $\ppp_n$.
Moreover, for any  trim set of patches $I$ 
\begin{equation} \label{measure_from_frequency}
freq_{\mu}(I) = \lim_{n \to \infty} \int_{P \in \ppp_n} \#( I \text{ in } P) d\rho_n,
\end{equation}
where $\# ( I \text{ in } P)$ denotes the number of translates 
of patches in the family $I$ that are subsets of $P$. 
Conversely, each sequence $\{\rho_n\}$ of volume normalized and transition consistent
measures 
corresponds to exactly one invariant measure $\mu$ via equation (\ref{measure_from_frequency}).  
\end{thm}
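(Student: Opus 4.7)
The plan is to prove the two directions separately, with the forward direction leveraging the cylinder set construction of Subsection \ref{frequencies_are_measures} and the converse requiring a Carath\'eodory-type extension argument. For the forward direction, given an invariant probability measure $\mu$, I would define $\rho_n$ on measurable $I \subset \ppp_n$ by $\rho_n(I) = \mu(\Omega_{I,U})/\text{Vol}(U)$ for sufficiently small open $U$. Recognizability is what makes this well-defined: each tiling decomposes uniquely into $n$-supertiles, so the cylinder sets on $n$-supertiles are genuine and not merely asymptotic. Non-negativity and countable additivity pass directly from $\mu$. For volume normalization, use the fact that the $n$-supertiles tile $\R^d$: the density of $n$-supertile control points is $\int d\rho_n$, each supertile contributes volume $\text{Vol}(P)$, so $\int \text{Vol}(P)\,d\rho_n = 1$ since $\mu$ is a probability measure. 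Transition consistency is then a bookkeeping identity: by recognizability at both levels, each $n$-supertile sits inside a unique $N$-supertile, so binning occurrences of $I$-supertiles by their enclosing $N$-supertile reproduces equation (\ref{measuretransitiondef}).

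For the limit formula (\ref{measure_from_frequency}), the van Hove property is the key tool. Given a trim set $I$ of patches of bounded diameter $r$, an occurrence of a patch from $I$ in a tiling $\T$ either sits entirely inside a single $n$-supertile or straddles the boundary between adjacent ones. The interior occurrences are counted exactly by $\int \#(I \text{ in } P)\,d\rho_n$. The boundary-straddling occurrences are supported in $(\partial P)^{+r}$, whose volume is a vanishing fraction of $\text{Vol}(P)$ as $n \to \infty$ by the van Hove condition. Passing to the limit yields $freq_\mu(I)$ as defined in Subsection \ref{frequencies_are_measures}.

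For the converse, given a volume-normalized transition-consistent sequence $\{\rho_n\}$, define a set function on the algebra generated by the cylinder sets $\Omega_{I,U}$ by
\[
\mu(\Omega_{I,U}) = \text{Vol}(U) \cdot \lim_{n \to \infty} \int_{P \in \ppp_n} \#(I \text{ in } P)\,d\rho_n.
\]
Transition consistency guarantees that the integrands for different $n$ agree after pushing forward along $M_{n,N}$, and van Hove again controls the boundary contribution, so the limit exists. Volume normalization ensures total mass $1$. Disjointness of cylinder sets built from disjoint trim families gives finite additivity on the generating algebra; translation invariance is automatic from the basepoint-independent nature of the formula. Apply Carath\'eodory extension to produce a Borel probability measure. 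Uniqueness follows because the trim cylinder sets generate the Borel $\sigma$-algebra on $\Omega_\rrr$, so two measures agreeing on them agree everywhere.

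The main obstacles lie in two places. First, establishing convergence of the limit in (\ref{measure_from_frequency}) in both directions requires balancing transition consistency (which stabilizes the integrand once $n$ is large enough to contain the patches of $I$) against van Hove (which kills the boundary-crossing error). Second, promoting finite additivity to countable additivity for the Carath\'eodory extension requires compactness of $\Omega_\rrr$, established earlier in the paper: any countable open cover of a compact cylinder set admits a finite subcover, which reduces countable additivity on the algebra to the already-verified finite additivity. Recognizability must be used at every stage to ensure that the hierarchy of supertile decompositions is actually a well-defined measurable partition structure on $\Omega_\rrr$.
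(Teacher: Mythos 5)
Your overall strategy is sound and the heart of the argument---counting occurrences of patches from $I$ as interior to a single high-order supertile versus boundary-straddling, and killing the latter with the van Hove property---is exactly what the paper does. But you reach the intermediate steps by a genuinely different route. The paper first reduces to ergodic measures (all three conditions are linear in $\mu$) and then proves transition consistency by applying the ergodic theorem to a generic tiling: it counts supertiles in balls $B_R(0)$, writes $\#(P \hbox{ in } B_R(0)) \approx \sum_Q M_{n,N}(P,Q)\,\#(Q\hbox{ in }B_R(0))$, and lets $R\to\infty$. Your ``bookkeeping'' argument works directly on cylinder sets, using recognizability to fiber each occurrence of an $n$-supertile over its unique enclosing $N$-supertile; this avoids the ergodic decomposition entirely and has no boundary error to control, which is arguably cleaner, though it requires some care to set up the measurable finite-to-one decomposition of $\Omega_{I,U}$. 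For the converse, the paper simply observes that equation (\ref{measure_from_frequency}) determines the measures of cylinder sets and defers the verification of the measure axioms to \cite{Fusion}; your explicit Carath\'eodory extension with compactness upgrading finite to countable additivity is more self-contained and is a legitimate way to fill in what the paper cites.

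Two places in your sketch need more care than you give them. First, volume normalization: the slogan ``density of control points times volume equals one'' is not immediate in the ILC setting because the supertiles in a small measurable class $I$ have \emph{different} volumes, so $\mu(\Omega_I)$ is not literally $Vol(P)\rho_0(I)$ for any single $P\in I$. The paper handles this by squeezing between the intersection $U_I$ and union $V_I$ of the supports over a partition of $\ppp_0$ into classes of Hausdorff diameter less than $\delta$, and letting $\delta\to 0$; you would need the same $\epsilon$-management. Second, you prove (\ref{measure_from_frequency}) only for trim sets of patches of bounded diameter $r$, but a general trim set need not have bounded diameter. The paper truncates to $I^r$, proves the formula for each $r$, and then interchanges the limits $r\to\infty$ and $N\to\infty$, justified by monotonicity of the integrals in both $r$ and $N$ (so both iterated limits equal $\sup_{r,N}$). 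Neither issue breaks your argument, but both are real steps that your proposal elides.
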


\begin{proof} Since volume normalization, transition consistency and 
equation (\ref{measure_from_frequency}) are linear conditions, 
and since all measures are limits of (finite) linear
combinations of ergodic measures, it is sufficient to prove these three
conditions for ergodic measures. 

Recall from Section \ref{frequencies_are_measures}
that $ \rho_n(I) = \frac{\mu(\Omega_{I,U})}{Vol(U)}$ is well-defined and
independent of our choice of (sufficiently small) $U$.  We will first prove 
that each $\rho_n$ is volume normalized, and then that the sequence is 
transition consistent.

We begin with $\rho_0$.
For any (measurable) set $I \subset \ppp_0$ of prototiles, let $U_I$ be
the intersection of the supports of the prototiles in $I$, let $V_I$ be
the union of the supports, and 
let $\Omega_I$ be the set of tilings where the origin is in a tile from $I$. 
Then $\Omega_{I,U_I} \subset \Omega_I \subset \Omega_{I,V_I}$, so 
$Vol(U_I) freq_\mu(I) = \mu(\Omega_{I,U_I}) \le \mu(\Omega_I) 
\le Vol(V_I) freq_\mu(I).$  
If all of the prototiles in $I$ had the same support, then 
$\Omega_{I,U_I}$ would equal $\Omega_I$, and, for any $P \in I$, we would have
$Vol(P)freq_\mu(I)=\mu(\Omega_I)$. 

Since the supports in $I$ are not
all the same, this equality does not hold exactly. However, for 
each $\epsilon>0$ we can find a $\delta>0$ such that, for all $I$ of
diameter less than $\delta$, $Vol(U_I)/Vol(V_I) > 1-\epsilon$. Furthermore,
for any prototile $P \in I$, we have $U_I \subset P \subset V_I$. This implies
that 
$$ \int_{P \in I} Vol(P) d\rho_0 \ge Vol(U_I) \rho_0(I) \ge (1-\epsilon) 
\mu(\Omega_{I,V_I}) \ge (1-\epsilon) \mu (\Omega_I), \hbox{ and }$$
$$\int_{P \in I} Vol(P) d\rho_0 \le Vol(V_I) \rho_0(I) \le (1-\epsilon)^{-1}
Vol(U_I) \rho_0(I) \le (1-\epsilon)^{-1} \mu(\Omega_I).$$

Now partition $\ppp_0$ into finitely many classes
$I_{1}, I_{2}, \ldots$ of Hausdorff diameter less than $\delta$. 
Then $1=\mu(\Omega) = \sum \mu(\Omega_{I_i})$, since the sets $\Omega_{I_i}$
overlap only on the set of measure zero where the origin is on the boundary
of two tiles. However, 
$$\int_{P \in \ppp_0} Vol(P) d\rho_0 = \sum_i \int_{P \in I_i} Vol(P) d\rho_0,$$
which is bounded between $(1-\epsilon)$ and $(1-\epsilon)^{-1}$. Since
$\epsilon$ is arbitrary, $\int_{\ppp_0} Vol(P) d\rho_0$ must equal 1, and 
$\rho_0$ is volume normalized. 
Exactly the same argument works
for $\rho_n$, using recognizability to replace $\Omega$ 
with $\omegan$. 

Next we prove transition consistency under the assumption that $\mu$
is ergodic.  From the ergodic theorem, there is a tiling $\T$ such that
spatial averages over the orbit of $\T$ can be used to compute the integral
of {\em every} measurable function on $\Omega$. In particular, 
for any measurable $I \subset \ppp_n$, we can compute the number of 
occurrences of $n$-supertiles of type $P \in I$ in a ball $B_R(0)$ 
of radius $R$ around the origin, sum over $P$, divide by $Vol(B_R(0))$,
and take a limit as $R \to \infty$. This limit must equal $\rho_n(I)$.   
In fact it is possible to show that for any measurable function $f$ on $\ppp_N$,
\begin{equation}
\lim_{R \to \infty} \frac{1}{Vol(B_R(0))} \sum_{Q \in \ppp_N} f(Q) \, \#(Q \text{ in } B_R(0)) \, = \,
\int_{Q\in \ppp_N} f(Q) d\rho_N,
\label{integratebyrhoN}
\end{equation}
where we let $\#(P \text{ in } B_R(0))$
be the number of occurrences of the $n$-supertile $P$ in the ball of radius 
$R$ around the origin in $\T$, counting only copies of $P$ that are 
completely in the ball. We then have 
$$
\#(P \text{ in } B_R(0)) \approx \sum_{Q \in \ppp_N} M_{n,N}(P,Q)\#(Q \text{ in }
B_R(0)),
$$ 
with the error coming from occurrences of $P$ in $N$-supertiles that are only
partially in $B_R(0)$, an error that is negligible in the $R \to \infty$
limit. Summing over $P \in I_n$, dividing by the volume of the ball, and 
taking the $R \to \infty$ limit, the left hand side becomes 
$\rho_n(I_n)=freq_\mu(I_n)$.  Letting $f(Q)$ be the function $M_{n,N}(I_n,Q)$
in equation (\ref{integratebyrhoN}) and using equation (\ref{measuretransitiondef}),
 the right-hand side becomes
$ (M_{n,N}\rho_N)(I_n)$.  In other words, $\rho_n$ and $\rho_N$
are transition consistent. 

We now turn to equation (\ref{measure_from_frequency}). 
Let $I$ be a trim set
of patches. Let $I^r \subset I$ be those patches whose diameter is at
most $r$. Restricting to integer values of $r$, we have that $I = \cup_r
I^r$, and hence that $freq_\mu(I) = \lim_{r \to \infty} freq_\mu(I^r)$. 
As with supertiles, 
$ freq_\mu(I^r) = \lim_{R \to \infty} \#( I^r \text{ in } B_R(0))/
Vol(B_R(0)).$
However, 
\begin{eqnarray*} \#( I^r \text{ in } B_R(0)) & = & 
\sum_{Q \in \ppp_N} \#(I^r \text{ in }Q) \#(Q \text{ in }B_R(0)) \cr 
&& + \#(I^r \text{ that spread over 2 or more $N$-supertiles
in }B_R(0)) \cr && + \#(I^r \text{ that intersect an $N$-supertile that is only 
partially in }B_R(0)).
\end{eqnarray*}
The contribution of the last term goes to zero as $R\to \infty$, since the
fractional area of the region within one $N$-supertile's diameter 
of the boundary 
of the ball goes to zero as $R\to \infty$. We then take the limit as 
$N \to \infty$, which eliminates the second term, since the fusion is 
van Hove and the patches in $I^r$ have bounded diameter. Using equation
(\ref{integratebyrhoN}) gives the
formula
\begin{equation} \label{rfinite}
freq_\mu(I^r) = \lim_{N \to \infty} \int_{Q \in \ppp_N} \#(I^r \text{ in }
Q) d\rho_N.
\end{equation}
Taking a limit of equation (\ref{rfinite}) as $r \to \infty$, 
and interchanging the order of 
the $r \to \infty$ and $N \to \infty$ limits, 
gives (\ref{measure_from_frequency}). This interchange is justified by the
fact that the integral on the 
right-hand-side of (\ref{rfinite}) is an increasing function
of both $r$ and $N$, so $\lim_{r\to\infty} \lim_{N\to \infty} = \sup_r \sup_N = 
\sup_{r,N} = \sup_N \sup_r = \lim_{N\to \infty} \lim_{r \to \infty}$. 
This completes the proof that a measure $\mu$ induces a
volume normalized and transition consistent sequence $\{\rho_n\}$ of 
measures on $\ppp_n$ satisfying equation (\ref{measure_from_frequency}).

Conversely, equation (\ref{measure_from_frequency}) gives frequencies
of trim families, and hence measures on cylinder sets, in terms of
the measures $\rho_n$.  The axiomatic properties of these measures
(e.g., additivity) follow directly from analogous properties of
frequencies, exactly as for FLC fusions (see \cite{Fusion} for
details).
\end{proof}  

\begin{cor} If $\rrr$ is a recognizable and van Hove
fusion rule, then the patches that are admitted in the limit have frequency
zero. 
\end{cor}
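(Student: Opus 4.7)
The plan is to apply formula (\ref{measure_from_frequency}) of Theorem \ref{measures_are_sequences} directly, whose hypotheses of recognizability and van Hove are exactly those assumed in the corollary. Fix any translation-invariant Borel probability measure $\mu$ on $\Omega_\rrr$, with induced sequence $\{\rho_n\}$ of measures on $\{\ppp_n\}$, and let $I$ denote any trim measurable family of patches, each of which is admitted by $\rrr$ only in the limit (so no patch in $I$ is literally admitted).

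The key observation is that, by definition, a literally admitted patch is one that appears up to translation as a subpatch of some $n$-supertile $P_n(c)$, whereas patches admitted only in the limit do not appear inside any supertile at all. Consequently, for every $n$ and every $P \in \ppp_n$, the count $\#(I \text{ in } P)$ — i.e., the number of translates of elements of $I$ that occur as subpatches of $P$ — equals $0$. Substituting this into (\ref{measure_from_frequency}) gives
$$freq_\mu(I) \;=\; \lim_{n \to \infty} \int_{P \in \ppp_n} \#(I \text{ in } P)\, d\rho_n \;=\; \lim_{n \to \infty} \int_{P \in \ppp_n} 0 \, d\rho_n \;=\; 0.$$

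Since every trim measurable sub-family of the collection of patches admitted only in the limit has frequency zero, the full collection carries frequency zero in the sense of the outer measure that $freq_\mu$ induces on $\tilde \fff_\infty$. There is no substantive obstacle here: the corollary is essentially a tautological consequence of the form of (\ref{measure_from_frequency}), which constructs patch frequencies entirely out of counts of occurrences inside supertiles — a process into which limit-only patches, by definition, cannot enter. The only bookkeeping point worth flagging is the passage from trim sub-families back to the whole family, which is handled by the countable additivity of $freq_\mu$ on trim sets established in the preceding subsection.
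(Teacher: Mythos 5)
Your argument is exactly the paper's: apply equation (\ref{measure_from_frequency}) and observe that $\#(I \text{ in } P)=0$ for every supertile $P$ when $I$ consists of patches admitted only in the limit, so the right-hand side vanishes identically. The proposal is correct and matches the paper's proof.
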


\begin{proof} In equation (\ref{measure_from_frequency}), the right
hand side is identically zero for any trim collection of patches 
that are admitted in the limit. 
\end{proof}
 
\begin{thm} If $\rrr$ is a recognizable and van Hove
fusion rule, and if each set $\ppp_n$ is finite, then frequency is
atomic. That is, for any trim set $I$ of patches, $freq_\mu(I)
= \sum_{P \in I} freq_\mu(P)$. 
\end{thm}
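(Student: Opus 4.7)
The plan is to partition any trim set $I$ into the patches that are literally admitted, $I_\ell$, and those admitted only in the limit, $I_L$, and then handle each piece separately using the preceding corollary and the countable additivity of $freq_\mu$ on trim subsets.

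The first and main step is to observe that $I_\ell$ is (at most) countable. Because each $\ppp_n$ is finite, the disjoint union $\bigsqcup_{n \ge 0} \ppp_n$ is a countable union of finite sets, and each supertile $P_n(c)$ is a finite patch, so it contains only finitely many sub-patches up to translation. Every literally admitted patch is by definition a sub-patch of some $P_n(c)$, so the collection of all literally admitted patches modulo translation is countable, and hence so is $I_\ell$.

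For the second step, I would apply the preceding corollary not only to $I_L$ but also to each singleton $\{P\}$ with $P \in I_L$: each such singleton is trim (as a subset of the trim set $I$), and its single element is admitted only in the limit, so $freq_\mu(\{P\}) = 0$ and $freq_\mu(I_L) = 0$. For the literal piece I would invoke the countable additivity of $freq_\mu$ on measurable subsets of a trim set, which was established earlier in the paper. Since $I_\ell$ is countable, we may write $I_\ell = \bigsqcup_{P \in I_\ell} \{P\}$, yielding
$$freq_\mu(I_\ell) = \sum_{P \in I_\ell} freq_\mu(P).$$
Adding the two pieces gives $freq_\mu(I) = \sum_{P \in I_\ell} freq_\mu(P) + 0 = \sum_{P \in I} freq_\mu(P)$, where on the right only countably many terms are nonzero, so the uncountable sum reduces to a countable one in the standard supremum-over-finite-subsums sense.

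The only substantive step is the countability of $I_\ell$, which uses the finiteness hypothesis on each $\ppp_n$ in an essential way; without it, supertiles could contain a continuum of distinct sub-patches and the argument would collapse. Everything else is measure-theoretic bookkeeping: singletons are measurable, subsets of trim sets are trim, and countable additivity of $freq_\mu$ is inherited from that of $\mu$ together with the disjointness of the cylinder sets $\Omega_{\{P\},U}$ for small enough $U$.
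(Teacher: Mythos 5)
Your proposal is correct and follows essentially the same route as the paper's proof: discard the limit-admitted patches using the preceding corollary, observe that the finiteness of each $\ppp_n$ makes the set of literally admitted patches countable, and conclude by countable additivity of $freq_\mu$ on trim sets. Your version just spells out the bookkeeping (the partition into $I_\ell$ and $I_L$, trimness of subsets, and the reduction of the uncountable sum to a countable one) a bit more explicitly than the paper does.
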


\begin{proof} Without loss of generality, we can assume that all patches
in $I$ are literally admitted, since all other patches have frequency zero.
However, since there are only finitely many supertiles at any level and 
finitely many patches in each supertile, 
there are only countably many literally admitted patches. Since 
$I$ is countable, the frequency of $I$ is the sum of the frequencies of its
elements. 
\end{proof}

\subsection{Parameterization of invariant measures for fusion tilings}

We next parametrize the space of invariant measures in terms
of the transition matrices $M_{n,N}$.  The construction is entirely analogous
to the parametrization of invariant measures for FLC tilings, only with
measures on $\ppp_n$ instead of vectors in $\R^{j_n}$. 

Let $\mmmn$ denote the space of 
volume normalized measures on $\ppp_n$. As noted above, 
$M_{n,N}$ maps $\mmmN$ to $\mmmn$. Let $\Delta_{n,N}= M_{n,N}\mmmN$.
(In the FLC case, this is the set of normalized
non-negative linear combinations of the columns of $M_{n,N}$.) 
Note that for $n<m<N$, $\Delta_{n,N} = M_{n,m} \Delta_{m,N} \subset \Delta_{n,m}$.
If we have
a sequence of volume-normalized measures $\{\rho_n\}$, then $\rho_n \in
\Delta_{n,N}$ for every $N$, and we define $\Delta_{n} = \cap_N \Delta_{n,N}$. 

Note that $M_{n,m}$ maps $\Delta_m$ onto $\Delta_n$. Define $\Delta_\infty$
to be the inverse limit of the spaces $\Delta_n$ under these maps. By 
definition, a point in $\Delta_\infty$ is a transition consistent 
sequence $\{\rho_n\}$ of volume normalized measures on $\{\ppp_n\}$. 

By Theorem \ref{measures_are_sequences}, $\Delta_\infty$ is the space of 
all invariant probability measures on $\Omega$.  We emphasize the following important corollary.

\begin{thm} \label{unique.ergodicity}
$\Omega$ is uniquely ergodic
if and only if each $\Delta_n$ is a single point. 
\end{thm}

\subsection{Measures arising from sequences of supertiles}

It is often possible to describe invariant measures in terms
of increasing sequences of supertiles. For each $N$-supertile $Q$, 
let $\delta_Q$ be the volume-normalized 
measure on $\ppp_N$ that assigns weight $Vol(Q)^{-1}$
to $Q$ and zero to all other $N$-supertiles. This induces measures 
$M_{n,N} \delta_Q$ on all $\ppp_n$ with $n<N$. If $I_n \subset \ppp_n$, then
$(M_{n,N}\delta_Q)(I_n)=\#(I_n \text{ in } Q)/Vol(Q)$.
In other words, $M_{n,N}\delta_Q$ describes
the frequency (number per unit area) of $n$-supertiles in $Q$. 
 
Let $\mu$ be an invariant measure on $\Omega$ and let $\{\rho_n\}$ be
the sequence of measures on $\ppp_n$ induced from $\mu$.  We say that
$\mu$ is {\em supertile generated} if there is a sequence of 
supertiles $\{Q_N\}$, with each $Q_N \in \ppp_N$ and $N$ ranging from 1 to
$\infty$, such that, for every $n$,
$\rho_n = \lim_{N \to \infty} M_{n,N} \delta_{Q_N}$. 

Supertile generated measures need not be ergodic (see \cite{Fusion} for a 
counterexample), but all ergodic tiling measures known to the authors
are supertile generated.
In most tiling spaces of interest, sequences of supertiles provide a 
useful means of visualizing the ergodic measures. 

\section{Complexity}
\label{sec-complexity}

An extremely powerful concept in the study of 1-dimensional subshifts
is that of complexity. For each natural number $n$, let the
{\em combinatorial complexity} $c(n)$ be
the number of possible words of length $n$. Usually this is done by
examining words within a fixed infinite (or bi-infinite) sequence, but
one can equally well consider words within any sequence in a subshift.
Many results about combinatorial complexity are known, such as:
\begin{itemize}
\begin{item}
If $c(n)$ is bounded, then all sequences are eventually periodic.
\end{item}\begin{item}
If $c(n)=n+1$ for all $n$ and the sequences are not eventually 
periodic, then each sequence is Sturmian and the space
of sequences is minimal.
\end{item}\begin{item}
If the sequences are non-periodic and 
come from a substitution, then $c(n)$ is bounded above and below by a 
constant times $n$. 
\end{item}\begin{item}
The topological entropy is $\limsup \log(c(n))/n$. 
\end{item}\end{itemize}
Likewise, for $\Z^2$ (or higher dimensional) subshifts we can count
the number of $n\times n$ square patterns, or $n \times m$ rectangular
patterns, but far less is known about higher dimensional combinatorial
complexity.

At first glance, computing the complexity of a tiling with infinite
local complexity would seem absurd. However, by adapting a
construction from studies of the topological pressure and topological
entropy of flows, we can formulate a notion of complexity that applies
to tilings, both FLC and ILC.

Let $\Omega$ be a space of $p$-dimensional tilings (not necessarily a
fusion tiling space), equipped with a metric on the space of
prototiles and hence a metric $d$ on the tiling space.  For each
$L>0$, let
$$d_L(\T_1, \T_2) = \sup_{x \in [0,L]^p} d(\T_1-x, \T_2-x).$$
That is, two tilings $\T_1$ and $\T_2$ are within $d_L$ distance $\epsilon$
if they agree on the region $[\epsilon^{-1}, L+\epsilon^{-1}]^p$, up
to $\epsilon$ changes in the labels, shapes, or locations of the tiles. A collection
$X$ of points in $\Omega$ is said to be {\em $(d_L,\epsilon)$-separated}
if the $d_L$ distance between distinct points in $X$ is bounded below 
by $\epsilon$.

\begin{dfn}The {\em tiling complexity function} $C(\epsilon,L)$ 
of a tiling space $\Omega$ is the maximum cardinality of a
$(d_L,\epsilon)$-separated set. 
\end{dfn}

$C(\epsilon,L)$ is closely related to the number of balls of $d_L$ radius
$\epsilon$ or $\epsilon/2$ needed to cover $X$. Specifically, the minimum
number of $\epsilon$-balls in an open cover is at most $C(\epsilon,L)$ and the minimum
number of $\epsilon/2$-balls is at least $C(\epsilon,L)$. 

In a 1-dimensional FLC tiling space where all tiles have length 1,
$C(\epsilon,L)$ is approximately $\epsilon^{-1}c([L + 2
\epsilon^{-1}])$, where $c$ is the combinatorial complexity of the
underlying subshift, since there are $c([L+2\epsilon^{-1}])$ choices for the
sequence of tiles appearing in $[-\epsilon^{-1}, L+\epsilon^{-1}]$
and $\epsilon^{-1}$ choices for where the origin sits within a
tile. In such examples, tiling complexity carries essentially the same
information as combinatorial complexity.

\begin{dfn} A tiling system has \begin{itemize}
\begin{item}
{\em bounded complexity} if
  $C(\epsilon,L)$ is bounded by a function of $\epsilon$, independent
  of $L$. 
\end{item}\begin{item}{\em polynomial complexity} if
  $C(\epsilon,L) < f(\epsilon) (1+L)^\gamma$ for some exponent
  $\gamma$ and some function $f(\epsilon)$. 
\end{item}\begin{item}
{\em$\epsilon$-entropy}  equal to $\limsup_{L \to \infty}
  (\log(C(\epsilon,L))/L^p)$. A priori this is non-decreasing as
  $\epsilon \to 0$. 
\end{item}\begin{item}
{\em finite entropy} if the
  limit of the $\epsilon$-entropy as $\epsilon \to 0$ is finite. The
entropy of the tiling space is defined to be that limit. 
\end{item}\end{itemize}
\end{dfn}
\noindent Note that entropy is not a pure number, but comes in units of (Volume)${}^{-1}$
since it describes the log of the number of patches of a given size per unit volume.

This next theorem implies that the way that $C(\epsilon,L)$ scales with $L$ is
preserved by topological conjugacy.

\begin{thm} Let $\Omega$ and $\Omega'$ be topologically
conjugate tiling spaces with metrics $d$ and $d'$ and 
tiling complexity functions $C$ and $C'$. Then, for
every $\epsilon>0$ there exists an $\epsilon'>0$ such that,
for every $L$, $C(\epsilon,L) \le C'(\epsilon',L)$.
\end{thm}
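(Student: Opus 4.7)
The plan is to exploit the fact that a topological conjugacy between the two tiling spaces is a homeomorphism $\phi : \Omega' \to \Omega$ that intertwines the $\R^p$ translation actions, together with the uniform continuity that follows from compactness of $\Omega$ and $\Omega'$ (noted in the excerpt as a consequence of compactness of the prototile sets). The strategy is to pull back a maximal $(d_L,\epsilon)$-separated set in $\Omega$ to a $(d'_L,\epsilon')$-separated set of the same cardinality in $\Omega'$.

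First I would fix $\epsilon>0$ and, using that $\Omega'$ is compact and $\phi$ is continuous, choose $\epsilon'>0$ such that $d'(\S_1,\S_2)<\epsilon'$ implies $d(\phi(\S_1),\phi(\S_2))<\epsilon$ for all $\S_1,\S_2 \in \Omega'$. The key point is that this $\epsilon'$ depends only on $\epsilon$ and not on any choice of tilings or on $L$. Equivalently, by contraposition, whenever $d(\phi(\S_1),\phi(\S_2))\ge \epsilon$ we have $d'(\S_1,\S_2)\ge \epsilon'$.

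Next I would take a $(d_L,\epsilon)$-separated set $\{\T_1,\dots,\T_k\} \subset \Omega$ of maximal cardinality $k = C(\epsilon,L)$ and consider its $\phi$-preimage $\S_i = \phi^{-1}(\T_i)$. For any $i\neq j$, by definition of $d_L$ there is some $x\in [0,L]^p$ with $d(\T_i - x,\T_j - x)\ge \epsilon$. Here is where equivariance enters: because $\phi$ commutes with translations, $\T_i - x = \phi(\S_i - x)$ and $\T_j - x = \phi(\S_j - x)$, so the previous inequality reads $d(\phi(\S_i-x),\phi(\S_j-x))\ge \epsilon$. Applying the contrapositive of uniform continuity gives $d'(\S_i - x, \S_j - x)\ge \epsilon'$, hence $d'_L(\S_i,\S_j)\ge \epsilon'$.

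This shows $\{\S_1,\dots,\S_k\}$ is a $(d'_L,\epsilon')$-separated set in $\Omega'$ of cardinality $k$, and therefore $C(\epsilon,L) = k \le C'(\epsilon',L)$. There is no real obstacle here: the whole argument is just uniform continuity plus the translation-equivariance of a conjugacy. The only point worth being careful about is checking that the $\epsilon'$ produced by uniform continuity is genuinely independent of $L$, which it is because it is obtained from uniform continuity of $\phi$ on the compact space $\Omega'$ before $L$ is ever mentioned.
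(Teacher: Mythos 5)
Your argument is correct and is essentially identical to the paper's proof: both use uniform continuity of the conjugacy in the direction $\Omega' \to \Omega$ (your $\phi$ is the paper's $f^{-1}$) to produce an $\epsilon'$ independent of $L$, and then use translation-equivariance to transport a maximal $(d_L,\epsilon)$-separated set to a $(d'_L,\epsilon')$-separated set of the same cardinality.
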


\begin{proof} Let $f: \Omega \to \Omega'$ be the topological
conjugacy. Since $f^{-1}$ is uniformly continuous, there
exists an $\epsilon'$ such that $d'(f(\T_1), f(\T_2))<\epsilon'$
implies $d(\T_1,\T_2)<\epsilon$. Thus, every $(d,\epsilon)$-separated
set in $\Omega$ maps to a $(d',\epsilon')$-separated set in
$\Omega'$. Since $f$ commutes with translation, $f$ also maps
$(d_L,\epsilon)$-separated sets to $(d'_L,\epsilon')$-separated sets
of the same cardinality.
Thus $C'(\epsilon',L)$ is bounded below by $C(\epsilon,L)$.
\end{proof}

\begin{cor} Suppose that the function $C(\epsilon,L)$ exhibits a property
that applies for all sufficiently small $\epsilon$. (For instance, that $C$ is
bounded in $L$, or has polynomial growth with a particular $\gamma$, or
has a particular entropy.) Then, for all sufficiently small $\epsilon$,
$C'$ has the same property.
\end{cor}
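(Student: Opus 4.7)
The plan is to apply the preceding theorem symmetrically. Since a topological conjugacy $f:\Omega \to \Omega'$ has an inverse $f^{-1}:\Omega' \to \Omega$ that is also a topological conjugacy, the theorem yields two dual inequalities: for every $\epsilon > 0$ there exists $\epsilon' > 0$ with $C(\epsilon,L)\le C'(\epsilon',L)$ for all $L$, and for every $\epsilon'>0$ there exists $\epsilon>0$ with $C'(\epsilon',L)\le C(\epsilon,L)$ for all $L$. This is the first step and the only one that invokes the conjugacy.

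Second, I would record the elementary monotonicity fact that $C(\epsilon,L)$ is non-increasing in $\epsilon$: if $\epsilon_1 \le \epsilon_2$, then any $(d_L,\epsilon_2)$-separated set is automatically $(d_L,\epsilon_1)$-separated, so $C(\epsilon_1,L)\ge C(\epsilon_2,L)$, and similarly for $C'$. This monotonicity lets me shrink the $\epsilon$ produced by the theorem so as to land in whatever small-$\epsilon$ range the hypothesis on $C$ requires.

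Third, I would dispatch each listed property in turn. For boundedness in $L$, say $C(\epsilon,L)\le K(\epsilon)$ for all $\epsilon<\epsilon_0$: given $\epsilon'>0$, pick $\epsilon$ from the dual inequality, replace it by $\min(\epsilon,\epsilon_0/2)$ using monotonicity, and conclude $C'(\epsilon',L)\le K(\min(\epsilon,\epsilon_0/2))$. For polynomial growth $C(\epsilon,L)\le f(\epsilon)(1+L)^\gamma$ for $\epsilon<\epsilon_0$, the same substitution gives $C'(\epsilon',L)\le f(\min(\epsilon,\epsilon_0/2))(1+L)^\gamma$, preserving the exponent $\gamma$. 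For the $\epsilon$-entropy and the topological entropy $h(\Omega)=\lim_{\epsilon\to 0}\limsup_L \log C(\epsilon,L)/L^p$, the dual inequalities imply that for every $\epsilon'$ there is $\epsilon$ with $h_{C'}(\epsilon')\le h_C(\epsilon)$, and vice versa; since both $h_C$ and $h_{C'}$ are non-decreasing as $\epsilon \to 0$, taking $\epsilon\to 0$ and $\epsilon'\to 0$ yields $h(\Omega)=h(\Omega')$, so finite-entropy and particular-entropy properties transfer.

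No step is a real obstacle, as the corollary is essentially a packaging result: the preceding theorem already does the analytic work, and the corollary just exploits its symmetry and the monotonicity of $C$ in $\epsilon$. The only point that requires a second's thought is the entropy case, where one must use monotonicity of the $\epsilon$-entropy in $\epsilon$ to interchange the two limits; the other properties follow by direct substitution.
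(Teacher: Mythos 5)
Your proposal is correct and matches the paper's intent: the corollary is stated without proof precisely because it follows by applying the preceding theorem to both the conjugacy and its inverse, exactly as you do. Your explicit use of the monotonicity of $C(\epsilon,L)$ in $\epsilon$ and the monotonicity of the $\epsilon$-entropy to handle the limit interchange simply fills in details the paper leaves implicit.
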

 
By contrast, the way that $C(\epsilon, L)$ scales with $\epsilon$ is {\em not}
topological.  It is even possible to have an FLC tiling space that is topologically
conjugate to an ILC space (see \cite{Radin-Sadun}, or subsection \ref{T2}, below). 

\begin{dfn} We say that the complexity function $C(\epsilon,L)$
{\em goes as} a given function $f(\epsilon,L)$ if $C(\epsilon,L)$ is
bounded both above and below by a constant times $f(\epsilon,L)$.
This is distinct from big-O notation, which only indicates an
upper bound.
\end{dfn}

\begin{ex}\label{solenoid}{\em Solenoids and random tilings.} 
  The dyadic solenoid, as described in Example \ref{sol-ex}, has
  bounded complexity. Recall that the tile set $\ppp_0$ is a 1-point
  compactification of a discrete set $A_0, A_1, \ldots$ labeled by 
the non-negative integers. For any $\epsilon$, let
  $N(\epsilon)$ be an integer such that the diameter of
  $\{A_{N(\epsilon)}, A_{N(\epsilon)+1}, \ldots, A_\infty\} \subset
  \ppp_0$ is less than $\epsilon$.  Then, for purposes of tiling
  complexity, the solenoid is essentially periodic with period
  $2^{N(\epsilon)}$, so the tiling complexity is bounded by
  $2^{N(\epsilon)}/\epsilon$, regardless of how big $L$ is.  The
  precise way that the tiling complexity scales with $\epsilon$ is
  somewhat arbitrary, depending on our choice of metric on $\ppp$, and
  hence on how $N(\epsilon)$ depends on $\epsilon$.

  Now consider the space of {\em all} tilings by the tiles $A_0, A_1,
  A_2, \ldots, A_\infty$ of the solenoid.  For each $\epsilon$, let
  $N'(\epsilon)$ be the cardinality of a maximal $\epsilon$-separated
  subset of $\ppp_0$. The tiling complexity $C(\epsilon,L)$ then goes as
  $N'(\epsilon)^{L+\frac{2}{\epsilon}}/\epsilon$, and the
  $\epsilon$-entropy is $\log(N'(\epsilon))$. For each $\epsilon$, the
  complexity is exponential in $L$, but the rate of exponential growth
  is different for different $\epsilon$. Since $N'(\epsilon) \to \infty$ 
  as $\epsilon \to 0$, this tiling space does not have finite
  entropy.
\end{ex}

The dyadic solenoid has bounded complexity, and the translational dynamics
are equicontinuous. This is not a coincidence. 

\begin{thm}If a tiling space has bounded complexity, then the
translational dynamics are equicontinuous. 
\end{thm}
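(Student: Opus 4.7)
The plan is to introduce the translation-invariant orbit metric
$d_\infty(\T_1, \T_2) := \sup_{x \in \R^p} d(\T_1 - x, \T_2 - x) = \sup_L d_L(\T_1,\T_2)$
and prove that on $\Omega$ it is uniformly equivalent to $d$. Once this is done, equicontinuity is immediate: given $\epsilon > 0$, uniform equivalence supplies a $\delta > 0$ with $d(\T_1,\T_2) < \delta$ implying $d_\infty(\T_1,\T_2) < \epsilon$, and by definition the latter forces $d(\T_1 - x, \T_2 - x) < \epsilon$ for every $x \in \R^p$. Since $d \le d_\infty$, the identity map $(\Omega, d_\infty) \to (\Omega, d)$ is automatically continuous, so the real work is in the opposite direction. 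My strategy is to show that $(\Omega, d_\infty)$ is compact; then the identity becomes a continuous bijection between compact Hausdorff spaces, hence a homeomorphism, so the two metrics agree topologically and, on the compact space, are uniformly equivalent.

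The total boundedness of $(\Omega, d_\infty)$ is where the bounded-complexity hypothesis enters. Fix $\epsilon > 0$ and let $N = \sup_L C(\epsilon/2, L)$, which is finite by hypothesis. For each integer $L \ge 1$ I pick a maximal $(d_L, \epsilon/2)$-separated set $S_L \subset \Omega$; it has cardinality at most $N$, and by maximality is also a $(d_L, \epsilon/2)$-net. Passing to a subsequence so that $|S_L| = k$ is constant and labelling $S_L = \{\T_1^L, \ldots, \T_k^L\}$, I use the $d$-compactness of $\Omega$ together with Cantor diagonalization to extract a further subsequence $L_m \to \infty$ along which each $\T_i^{L_m}$ converges in $d$ to a limit $\T_i$. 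The claim is that $\{\T_1, \ldots, \T_k\}$ is a $(d_\infty, \epsilon/2)$-net. Given any $\T \in \Omega$, pigeonhole on the net indices produces some $i$ and a further subsequence along which $d_{L_m}(\T, \T_i^{L_m}) \le \epsilon/2$. For any fixed $L'$, once $L_m \ge L'$ we have $d_{L'}(\T, \T_i^{L_m}) \le d_{L_m}(\T, \T_i^{L_m}) \le \epsilon/2$; and because $d_{L'}$ is the supremum of the jointly continuous function $(\T_1,\T_2,x) \mapsto d(\T_1 - x, \T_2 - x)$ over the compact cube $[0,L']^p$, it is jointly continuous in its tiling arguments. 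Letting $m \to \infty$ gives $d_{L'}(\T, \T_i) \le \epsilon/2$, and taking the supremum over $L'$ yields $d_\infty(\T, \T_i) \le \epsilon/2$.

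Completeness of $(\Omega, d_\infty)$ is straightforward: a $d_\infty$-Cauchy sequence is $d$-Cauchy and hence $d$-converges to some $\T \in \Omega$, and the triangle inequality $d(\T_n - x, \T - x) \le d(\T_n - x, \T_m - x) + d(\T_m - x, \T - x)$, together with the Cauchy hypothesis (which bounds the first term uniformly in $x$) and the continuity of translation (which sends the second term to $0$ as $m \to \infty$ for each fixed $x$), shows that $d_\infty(\T_n, \T) \to 0$. Combined with total boundedness this makes $(\Omega, d_\infty)$ compact, and the argument concludes as in the first paragraph. The main obstacle is the total-boundedness step: we must turn a family of $\epsilon/2$-nets that depend on the scale $L$ into a single finite $d_\infty$-net, and the crucial technical point is that the joint continuity of each $d_{L'}$ in its tiling arguments lets the bound $d_{L_m}(\T, \T_i^{L_m}) \le \epsilon/2$ pass to the $L_m \to \infty$ limit at every lower scale $L'$.
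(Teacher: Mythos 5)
Your proof is correct, and it rests on the same central idea as the paper's: bounded complexity forces the existence of a finite $(d_\infty,\epsilon/2)$-net for the orbit metric $d_\infty(\T,\T')=\sup_x d(\T-x,\T'-x)$, and from such a net one converts $d$-closeness into $d_\infty$-closeness, which is exactly equicontinuity. The execution differs at both ends, and in each case your version is the more careful one. The paper passes directly from the boundedness of $C(\epsilon/2,L)$ to a maximal finite $(d_\infty,\epsilon/2)$-separated set; you instead build the $d_\infty$-net as a $d$-limit of maximal $(d_L,\epsilon/2)$-nets via diagonalization, using the (lower semi)continuity of $d_{L'}$ in its tiling arguments to push the bound $d_{L_m}(\T,\T_i^{L_m})\le\epsilon/2$ down to every finite window and then to the supremum --- this supplies exactly the step the paper asserts without argument. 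At the other end, the paper finishes with a Lebesgue-number argument applied to the cover by $d_\infty$-balls of radius $\epsilon/2$; that step tacitly requires those balls to be open in the $d$-topology, which is not evident since $d_\infty\ge d$. You avoid this by proving $(\Omega,d_\infty)$ is compact (totally bounded plus complete) and invoking the fact that a continuous bijection between compact metric spaces is a uniform homeomorphism; this is clean and gap-free. One cosmetic point, inherited from the paper's definitions: $\sup_L d_L$ with $d_L$ taken over $[0,L]^p$ only controls translates into the positive orthant, so it is not literally $\sup_{x\in\R^p}$; this is harmless because replacing $[0,L]^p$ by $[-L,L]^p$ changes $C(\epsilon,L)$ only to $C(\epsilon,2L)$ (after a common translation of any separated set), so the bounded-complexity hypothesis is unaffected, but it is worth a sentence if you write this up.
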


\begin{proof} Let $\Omega$ be a tiling space with bounded complexity. 
We need to show that, for each $\epsilon>0$, there is a $\delta>0$ 
such that $d(\T, \T')< \delta$ implies that, for all $x \in \R^d$, 
$d(\T-x, \T'-x) < \epsilon$.

Let $d_\infty(\T, \T') = sup_x d(\T-x, \T'-x)$.  The fact that the
complexity $C(\epsilon/2, L)$ reaches a maximum of $N(\epsilon/2)$ for some 
$L$ and
then never grows implies that there is a $d_\infty$-separated set of
cardinality $N(\epsilon/2)$, but not one of cardinality $N(\epsilon/2)+1$. 
Let $\T_1, ... \T_{N(\epsilon/2)}$ be
such a maximal separated set. Since it is maximal, every other point
in the tiling space is within $d_\infty$-distance $\epsilon/2$ of one of
these points, so we can find an open cover of our tiling space with
balls of $d_\infty$-radius $\epsilon/2$ centered at each $\T_i$.

Now let $\delta$ be the Lebesgue number of this open cover with regard
to the original metric $d$.  That is, if any two tilings are within
$\delta$ of each other (in the $d$ metric), then there is an open set in
our cover that contains both of them.  But that means that both of
them are within $d_\infty$-distance $\epsilon/2$ of some $\T_i$, and hence
within $d_\infty$-distance $\epsilon$ of each other.  Which is to say,
their orbits always remain within $\epsilon$ of one another. \end{proof}

While the behavior of the complexity function is invariant under 
topological conjugacies, it is not preserved by homeomorphisms. The
stretched solenoid of Theorem \ref{can-expand} is homeomorphic to 
the dyadic solenoid, but its dynamics are not equicontinuous, and so its 
complexity function is not bounded.

\section{Geography of the ILC landscape}\label{sec-geography}

There are several ways to classify ILC tiling spaces. One is by how close the
dynamical and topological properties of the space come to those of an
FLC tiling space.

\begin{enumerate}
\begin{item} Is $\Omega$ an FLC tiling space?
\end{item}\begin{item}
If not, is it topologically conjugate to an FLC tiling space?
\end{item}\begin{item}
If not, is it homeomorphic to an FLC tiling space?
\end{item}\begin{item}
If not, does it have a totally disconnected transversal?
\end{item}\begin{item}
If not, does the transversal have finite topological dimension? 
\end{item}\end{enumerate}

The answer to any of these questions can be `yes' when the answers to the previous questions were `no'.  
In Section \ref{T2} we will exhibit an ILC tiling space that 
is topologically conjugate to an FLC tiling space. By applying the 
tile-stretching trick of Theorem \ref{can-expand} to that example, we
can construct a space that is homeomorphic to an FLC tiling space but that
is not topologically conjugate. The dyadic solenoid has a totally disconnected
transversal but is not homeomorphic to an FLC tiling space. The pinwheel
tiling has a 1-dimensional transversal. Finally, a space of tilings of the 
line by arbitrary intervals of length at least 1 and at most 2 has an 
infinite-dimensional transversal.   These assertions are discussed in detail in the examples of the appendix.

Another approach is to ask questions about the geometry and combinatorics 
of the tiles themselves:

\begin{enumerate}
\item Are there finitely many tile labels (and hence shapes and sizes)? 
\begin{enumerate}
\item[] If so, then infinite local complexity can only come from the
  ways that tiles slide past one another.  In two dimensions Kenyon
  showed \cite{Kenyon.rigid} that infinite local complexity arises
  only along arbitrarily long line segments of tile edges (fault
  lines), or along a complete circle of tile edges (fault circles).
  The common situation is to see shears along fault lines as in Example 2.
  We say that tilings of this type have {\em shear} infinite local 
complexity. They are discussed in \cite{Danzer, Kenyon.rigid,
    Me.Robbie, Me.Lorenzo, Sadun.pin}.
\end{enumerate}

\item If not, are there finitely many tile shapes and sizes? 
\begin{enumerate}
\item[] In this situation, ILC arises automatically, due to the infinitude of labels.   However, these spaces can have additional ILC arising geometrically, for instance from fault lines or circles.
\end{enumerate}

\item
If so, do the tiles meet full-edge to full-edge (or full-face to full-face
in dimensions greater than 2)? 
\begin{enumerate}
\item[] If there are only finitely many tile shapes,
meeting full-edge to full-edge, then the infinite local complexity comes 
entirely from the infinitude of labels. Using the techniques of \cite{SW},
we can recast the tiling flow as the suspension of a $\Z^d$ action on a 
transversal. In this case we call the complexity {\em combinatorially infinite}.
\end{enumerate}
\item Are there finitely many two-tile patches if some rotations are allowed?
\begin{enumerate}
\item[]
The pinwheel tilings are an example of this phenomenon.  Although it and other tilings of this class can be handled with the methods of \cite{BG, ORS, inverse}, many 
questions remain. We call this type of local complexity {\em rotationally infinite}.   
\end{enumerate}

\item

If any of these answers are ``no'', is the tiling space topologically
conjugate (or homeomorphic) to another tiling space where the answer 
is ``yes''? 
\end{enumerate}

The answers to these questions can depend on how we present the tilings. 
If an ILC tiling has only finitely many tile labels, then by working with
collared tiles we get a space with infinitely many tile labels. It is 
always possible to get tiles to meet full-edge to full-edge by using the 
Vorono\"{\i} trick we used in the proof of Theorem \ref{ILChomeoFLCcond}:  convert the tiling the a point pattern, and then consider 
the Vorono\"{\i} cells of the resulting points. However, this will typically 
result in an infinite number of tile shapes. 
In 1 dimension, it is always 
possible to resize the tiles to have unit length, so every 1-dimensional
ILC tiling space is homeomorphic (but not necessarily topologically conjugate)
to a space with combinatorially infinite local complexity. This is essentially the same as
studying the first return map on the transversal. 


Shear, combinatorial or rotational infinite local complexity 
are very special conditions
but these are the forms of ILC about which most is known.  We expect there
to be a wide variety of ways for the complexity of a tiling space to
be infinite, each with potentially different properties that have yet
to be discovered.  An important question will be to understand how
these properties are influenced by the topology, geometry, and combinatorics of
the tiles, tilings, and tiling spaces.

\begin{appendix}
\section{Examples}
\label{sec-examples}

In this section we go into more detail about the measures and complexity
of several ILC fusion tiling spaces.   These include the pinwheel, shear,
and solenoid examples already introduced elsewhere in the paper and also
some new tiling spaces.

\subsection{Toeplitz flows}\label{T2}

Toeplitz flows are variations on the dyadic solenoid construction
 of Example \ref{sol-ex}. In all cases that we will study, 
$\ppp_0$ is a compactification
of the set $\{A_0, A_1, A_2, \ldots\}$, and the supertiles take the form
$P_{n+1}(k) = P_{n}(k)P_n(n)$, where $k$ is either an integer greater than $n$
or a limit point.  The tilings all have an $A_0$ in every other position,
an $A_1$ in every 4th position, and generally an $A_n$ in every $(2^{n+1})$-st
position, and may include a single instance of a limiting tile (such as
the dyadic solenoid's $A_\infty$). The structure of the tiling space depends
on how many limit points there are, and on which sequences of tiles converge
to each limit point. The resulting tiling spaces are all measurably conjugate,
and hence have the same spectral properties, but are typically not
homeomorphic. They can often be distinguished by cohomology and by
gap-labeling group, and sometimes by complexity. 

A particularly interesting example involves two limit points
$A_\infty$ and $A'_\infty$, where the prototiles $A_{2n}$ with even
labels converge to $A_\infty$ and the prototiles $A_{2n+1}$ with odd
labels converge to $A'_\infty$.  Call this ILC fusion tiling space
$\Omega_{2}$. 

\begin{thm} The space $\Omega_{2}$ is topologically conjugate to the
(FLC) tiling space $\Omega_{PD}$ obtained from the period-doubling
substitution $X \to YX$, $Y \to XX$. 
\end{thm}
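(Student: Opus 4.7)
The plan is to build an explicit topological conjugacy $\phi: \Omega_2 \to \Omega_{PD}$ by the label rule $A_k \mapsto X$ for $k$ even, $A_k \mapsto Y$ for $k$ odd, with $A_\infty \mapsto X$ and $A'_\infty \mapsto Y$. The underlying reason this works is that both spaces are almost-one-to-one extensions of the $2$-adic odometer: each tiling is parametrized by a $2$-adic integer $z$, namely the limit of the head positions of its $n$-supertiles, together with an additional binary choice at the single position $p = z$ when $z$ happens to be an ordinary integer.

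First I would unpack the fusion rule $P_{n+1}(k) = P_n(k)P_n(n)$ and verify, for any $T \in \Omega_2$ with hierarchy reference $z$, that the tile at integer position $p$ is $A_k$ with $k = \nu_2(p-z)$, where $\nu_2$ is the $2$-adic valuation and the convention $\nu_2(0)=\infty$ yields the accumulation tile ($A_\infty$ or $A'_\infty$) at $p=z$. Then I would establish the analogous formula for $\Omega_{PD}$: for any $S \in \Omega_{PD}$ with substitution hierarchy reference $z$, the letter at position $p \neq z$ equals $X$ when $\nu_2(p-z)$ is even and $Y$ when $\nu_2(p-z)$ is odd. This is the crux of the argument, and it rests on a combinatorial symmetry of period-doubling: for every letter $x \in \{X,Y\}$ and every $n \geq 0$, the second half of $\sigma^{n+1}(x)$ equals the word $\sigma^n(X)$ independently of $x$, and the first letter of $\sigma^n(X)$ is $X$ for $n$ even and $Y$ for $n$ odd. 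Thus whenever $\nu_2(p-z) = n$, position $p$ is the head of a ``second'' $n$-sub-supertile of some $(n+1)$-supertile, which forces the asserted parity.

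With these two descriptions established, $\phi(T)$ lands in $\Omega_{PD}$ for every $T$, and $\phi$ commutes with translations by construction. Continuity of $\phi$ is immediate because $\phi^{-1}(X)$ and $\phi^{-1}(Y)$ are clopen in the compact label space $\lcal_0$. Bijectivity is obtained by running the construction in reverse: the substitution hierarchy of $S \in \Omega_{PD}$, unique by recognizability of the primitive aperiodic period-doubling substitution, recovers $z$, and then $T_p = A_{\nu_2(p-z)}$, with the label at $p = z$ choosing $A_\infty$ or $A'_\infty$ according to whether $S_z$ is $X$ or $Y$. Because $\Omega_2$ and $\Omega_{PD}$ are compact Hausdorff, the continuous bijection $\phi$ is automatically a homeomorphism, and hence a topological conjugacy.

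The main obstacle is the $\nu_2$-parity formula for $\Omega_{PD}$, which is what reveals the Toeplitz nature of the period-doubling tiling and makes the identification with $\Omega_2$ transparent. A secondary bookkeeping issue is that both spaces have two-element fibers over the integer subset $\Z \subset \mathbb{Z}_2$ --- the choice of accumulation tile $A_\infty$ versus $A'_\infty$ on the $\Omega_2$ side, and the choice of letter at the ``seam'' on the $\Omega_{PD}$ side --- and singleton fibers over $\mathbb{Z}_2 \setminus \Z$; one must verify that $\phi$ pairs these two choices consistently, which it does through the assignments $A_\infty \mapsto X$ and $A'_\infty \mapsto Y$.
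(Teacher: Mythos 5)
Your proposal is correct, and your forward map is exactly the one the paper uses ($A_{2n},A_\infty\mapsto X$; $A_{2n+1},A'_\infty\mapsto Y$), but you reach the conclusion by a genuinely different route. The paper's proof is procedural: it describes the inverse by picking an arbitrary $Y$ tile $t_0$ and labelling all tiles at odd distance from it $A_0$, then picking a remaining $X$ tile $t_1$ and labelling tiles at distance $2\pmod 4$ from it $A_1$, and so on; it does not verify that these tiles actually form arithmetic progressions of the stated moduli, that the forward map lands in $\Omega_{PD}$, or that the two maps are mutually inverse and continuous. You instead parametrize both spaces as almost one-to-one extensions of the dyadic odometer and prove the explicit formulas $T_p=A_{\nu_2(p-z)}$ and $S_p=X\iff\nu_2(p-z)$ even, the latter resting on your combinatorial lemma that the second half of $\sigma^{n+1}(x)$ equals $\sigma^n(X)$ for either letter $x$ and that the first letter of $\sigma^n(X)$ alternates with the parity of $n$ (both easily checked: $\sigma^{n+1}(X)=\sigma^n(Y)\sigma^n(X)$, $\sigma^{n+1}(Y)=\sigma^n(X)\sigma^n(X)$). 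This lemma is precisely what justifies the paper's ``pick an arbitrary $Y$ tile'' step --- it is why any $Y$ tile serves as a base point for the mod-$2$ progression of $A_0$'s --- so your argument supplies the verification the paper leaves implicit, at the cost of setting up the $2$-adic bookkeeping. Your handling of the two-point fibers over $\Z\subset\mathbb{Z}_2$ (the $A_\infty$ versus $A'_\infty$ choice matching the seam letter $X$ versus $Y$) and the appeal to compactness to upgrade the continuous bijection to a homeomorphism are both sound; the only point worth making explicit is that the tiling $T$ defined by $T_p=A_{\nu_2(p-z)}$ is actually admitted by the fusion rule, which follows directly from the same formula applied to the supertiles $P_n(k)$.
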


\begin{proof} The map from $\Omega_{2}$ to $\Omega_{PD}$ just
  replaces each $A_{2n}$ or $A_\infty$ tile with an $X$ tile and
  replaces each $A_{2n+1}$ or $A'_\infty$ tile with a $Y$ tile. The
  inverse map $\Omega_{PD} \to \Omega_{2}$ is slightly more
  complicated. Start with a period-doubling tiling.  Pick an arbitrary
  $Y$ tile $t_0$, and give the label $A_0$ to all the tiles that are
  an odd distance from $t_0$.  Then pick a remaining $X$ tile $t_1$,
  and give the $A_1$ label to all tiles a distance $2 \pmod{4}$ from
  $t_1$.  Pick a remaining $Y$ tile $t_2$ and give the $A_2$ label to
  all tiles a distance $4 \pmod{8}$ from $t_2$, etc.  If a tile does not
  eventually get a finite label $A_k$ from this process, replace it
  with an $A_\infty$ if it is an $X$ and with an $A'_\infty$ if it is
  a $Y$.
\end{proof}

We compute the complexity $C_2(\epsilon, L)$ of $\Omega_{2}$ in two ways,
once from the conjugacy to $\Omega_{PD}$ and once from the definition.

Since $\Omega_{PD}$ is an FLC substitution tiling, its complexity
(which we denote $C_{PD}(\epsilon,L)$ ) must be linear in that it goes
as $L/\epsilon$ for $L> 1/\epsilon$.  As noted earlier, the dependence
on $\epsilon$ is not a topological invariant, but the dependence on
$L$ is. Thus, for all sufficiently small $\epsilon$, the complexity of
$\Omega_{2}$ must grow linearly with $L$ also.  However, this argument
does not indicate how $C_{2}(\epsilon,L)$ scales with $\epsilon$.

Next, consider $\Omega_{2}$ directly. Let $\epsilon_0=d(A_\infty,A'_\infty)$ in
the metric on $\ppp_0$. If $\epsilon>\epsilon_0$, there exists an $N$
such that all tiles $A_n$ with $n \ge N$ are within $\epsilon$ of each other,
and also within $\epsilon$ of $A_\infty$ and $A'_\infty$. To within $\epsilon$, 
every tiling is then periodic with period $2^N$, and so the complexity is
bounded.

However, when $\epsilon < \epsilon_0$, then there exists an $N$ such that
all $A_n$ with $n \ge N$ are either within $\epsilon$ of $A_\infty$ or within
$\epsilon$ of $A'_\infty$, but not both. Furthermore, for $N$ sufficiently 
large, it is precisely the even labels that are close to $A_\infty$ and the 
odd labels that are close to $A'_\infty$. For counting purposes, we can 
replace all even labels $2n \ge N$ with $X$ and odd labels $2n+1 \ge N$ 
with $Y$,
as well as replacing $A_\infty$ with $X$ and $A'_\infty$ with $Y$. The remaining
tiles $A_n$ with $n < N$ are arranged periodically (with period $2^{N}$)
and do not affect the complexity for $L> 2^N$. Thus for $\epsilon<\epsilon_0$
and $L> 2^{N(\epsilon)}$, $C_2(\epsilon,L)$ is {\em exactly} the same
as $C_{PD}(\epsilon,L)$, and goes as $L/\epsilon$. 

We can also consider Toeplitz flows with two limit points, only with the
limiting structure more complicated than ``$A_{2n} \to A_\infty$, $A_{2n+1} 
\to A'_\infty$.''  Partition the non-negative integers into two infinite
sets $S$ and $S'$, and suppose that $A_n$ with $n\in S$ converges to 
$A_{\infty}$ while $A_n$ with $n\in S'$ converges to $A'_{\infty}$. Let 
$\alpha = \sum_{n \in S}2^{-n}$. Since $S$ and $S'$ are both infinite, 
$\alpha$ is not a dyadic rational; all numbers between 0 and 1 that are
not dyadic rationals are possible values of $\alpha$. Call the resulting
tiling space $\Omega_\alpha$.  

All spaces $\Omega_\alpha$ are measurably conjugate to the dyadic solenoid. 
However, they are not homeomorphic to the dyadic solenoid, and are typically
not homeomorphic to each other. The first \v Cech cohomology of $\Omega_\alpha$
works out to be $\Z[1/2] \oplus \Z$ (as opposed to $\Z[1/2]$ for the 
solenoid). The gap-labeling group, which is the Abelian group generated by
the measures of the clopen subsets of the transversal, is $\Z[1/2] + \alpha \Z$.
If $\alpha$ and $\alpha'$ differ by a dyadic rational, then the gap labeling
groups of $\Omega_\alpha$ and $\Omega_{\alpha'}$ are the same, and in fact 
$\Omega_\alpha$ and $\Omega_{\alpha'}$ are topologically conjugate. If $\alpha
-\alpha'$ is not a dyadic rational, then the gap-labeling groups are different
and the spaces are not topologically conjugate. If furthermore $\alpha$ and
$\alpha'$ are not both rational, then it turns out that the spaces are
not even homeomorphic. 

We can even consider Toeplitz flows with an infinite limit set. For
instance, imagine that the limit set is a $k$-sphere, with the points 
$A_0, A_1, \ldots$ corresponding to a dense subset of that sphere. In that 
case, the limit structure is detected by higher cohomology groups, with
$\check H^{k+1} = \Z$.

\subsection{The pinwheel tiling}

The pinwheel tiling is the most famous example of a tiling with rotational
ILC. The fusion rules shown in Figure \ref{pinfusion} are rotationally
invariant.  The decomposition of a right-handed supertile involves
rotation by $\alpha = \arctan(1/2)$ plus multiples of $\pi/2$, while
decomposition of a left-handed supertile involves rotation by
$-\alpha$. Since each $N$-supertile consists of both right-handed and
left-handed $(N-1)$-supertiles, a right-handed $N$-supertile will
contain tiles that are rotated by $N\alpha$, $(N-2)\alpha$, \ldots,
$(2-N)\alpha$ relative to the supertile (plus multiples of
$\pi/2)$. In the $N \to \infty$ limit, the orientation and directions
of the tiles are uniformly distributed on $\Z_2 \times S^1$.
Likewise, for any fixed $n$ the distribution of $n$-supertiles is
uniform in the $N \to \infty$ limit.  This establishes the uniqueness
and rotational invariance of the translation-invariant measure on
$\Omega_{pin}$.  (For details of this argument, see
\cite{Radin.pinwheel}.) Specifically, $d\rho_n$ is $d\theta/[4\pi
\times 5^n]$ on each of the two circles in $\ppp_n$.

For small $\epsilon$, the complexity $C(\epsilon,L)$ of the pinwheel
tiling goes as $L^3/\epsilon^3$. To specify a patch of size $L$ to within
$\epsilon$, one must specify the type of supertile 
containing the patch, and then specify where in the supertile the
patch lies.  (If the patch straddles two supertiles, then one must
specify two supertiles, but for each supertile there are only a
bounded number of choices for the nearest neighbors, so this does not
affect the scaling.) 
The direction of the supertile must be
specified to within $\epsilon/L$, since a rotation by $\epsilon/L$ of
a patch of size $L$ will move some tiles a distance $\epsilon$. Thus
the number of possible supertiles goes as $L/\epsilon$, while the number
of possible positions in the supertile goes as $L^2/\epsilon^2$, for
a total complexity that goes as $L^3/\epsilon^3$.

This is in contrast to the situation for self-similar FLC fusion
tilings. In a self-similar FLC fusion, there are a bounded number of
$n$-supertiles and a bounded number of ways that two $n$-supertiles
can meet. To specify a patch to within $\epsilon$, one must choose
among the finitely many supertiles of a given size and pick a location
within that supertile, resulting in a complexity $C(\epsilon, L)$ that goes
as $L^2/\epsilon^2$.  As noted earlier, the difference between scaling
as $\epsilon^{-2}$ vs.~$\epsilon^{-3}$ is not
significant, but the difference between scaling as $L^2$
vs.~$L^3$ is topological.  This remark applies to our next three examples as well.

\subsection{The anti-pinwheel and two hybrids}\label{sec-hybrid}

The {\em anti-pinwheel} tilings \cite{Radin.pinwheel} 
have tiles and supertiles with the same
shape as pinwheel tiles and supertiles. The difference is that $n$-supertiles
are built from $(n-1)$-supertiles as shown in Figure \ref{anti.pin.fig}.
\begin{figure}[ht]
\includegraphics[width=1.5in]{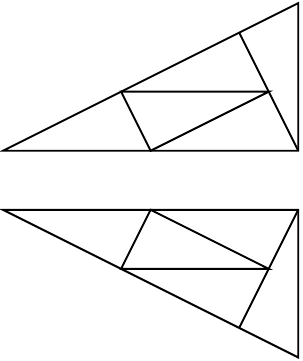}
\caption{The anti-pinwheel fusion rule}
\label{anti.pin.fig}
\end{figure}
An $n$-supertile of type $(R,\theta)$ is comprised of three $(n-1)$-supertiles
of type $(L,\theta+\alpha)$, one of type $(L,\theta+\alpha+\pi)$ and 
one of type $(L,\theta + \alpha + \pi/2)$. Likewise, an 
$n$-supertile of type $(L,\theta)$ is comprised of three $(n-1)$-supertiles
of type $(R,\theta-\alpha)$, one of type $(R,\theta-\alpha+\pi)$ and 
one of type $(R,\theta - \alpha - \pi/2)$.

Note that every daughter $(n-1)$ supertile has the same handedness and the
same direction (up to multiples of $\pi/2$), hence that every grand-daughter
$(n-2)$ supertile has the same handedness and direction (up to $\pi/2$), and
so on. If $N$ is even, then an $(R,\theta)$ $N$-supertile consists only of
tiles of type $(R,\theta)$, $(R,\theta+\pi/2)$, $(R,\theta+\pi)$ and
$(R,\theta-\pi/2)$. If $N$ is odd, the only tiles are of type 
$(L,\theta+\alpha)$, $(L,\theta+\alpha+\pi/2)$, $(L,\theta+\alpha+\pi)$, and
$(L,\theta+\alpha-\pi/2)$. Each anti-pinwheel tiling exhibits only four
of the uncountably many tile types!

The anti-pinwheel tiling space is neither minimal nor uniquely
ergodic. There is a minimal component for each handedness and each
direction (mod $\pi/2$). Each minimal component is uniquely ergodic,
with a measure for which $\rho_n$ is supported on a single handedness
and four angles, spaced $\pi/2$ apart. As with any self-similar FLC
tiling, each ergodic component has complexity that goes as $L^2/\epsilon^2$,
while the entire tiling space has complexity that goes as $L^3/\epsilon^3$, for
the same reasons as the pinwheel.

We next develop a hybrid between the pinwheel and 
anti-pinwheel.  This hybrid admits an action of the Euclidean group
and is minimal
with respect to translation. However, the system is not uniquely ergodic and the
ergodic measures are not rotationally invariant. 

The hybrid $n$-supertiles have the same supports as the pinwheel
(or anti-pinwheel) $n^2$-supertiles.  That is, they are 
1-2-$\sqrt{5}$ right triangles scaled up by $5^{n^2/2}$. 
Note that each $n$-supertile consists of $5^{2n-1}$ $(n-1)$-supertiles. 

The decomposition of $n$-supertiles into $(n-1)$-supertiles is as
follows.  First decompose the $n$-supertile into 5 smaller triangles
as with the anti-pinwheel. Repeat this process $2n-2$ times. Then pick
one of the $5^{2n-2}$ triangles and subdivide it using the pinwheel
pattern, while subdividing the other $5^{2n-2}-1$ triangles using the
anti-pinwheel pattern. The choice of which triangle to subdivide using
the pinwheel rule is arbitrary but has to be made consistently --- it
is part of our fusion rule.  For definiteness, we can choose to apply
the pinwheel rule to the triangle that contains the center-of-mass of
the $n$-supertile, as shown in Figure \ref{hybrid.pin.fig}.

An $n$-supertile of type $(R,\theta)$ will then consist of two 
right-handed $(n-1)$-supertiles and $5^{2n-1}-2$ left-handed $(n-1)$-supertiles,
all with angle $\theta+\alpha$ plus multiples of $\pi/2$. 
An $n$-supertile of type $(L,\theta)$ will consist of two 
left-handed $(n-1)$-supertiles and $5^{2n-1}-2$ right-handed $(n-1)$-supertiles,
all with angle $\theta-\alpha$ plus multiples of $\pi/2$. 
As a consequence, in a $2n$-supertile, a fraction greater than
$\prod_{k=2}^{n} \left ( 1 - \frac{2}{5^{2n-1}} \right ) > 0.98$ of the tiles
will have the same angle $\theta$ as the supertile (up to multiples of $\pi/2$), and 3/5 of those will
have the same handedness as the supertile. The distribution of angles
and handedness within a supertile is far from uniform, even in the 
$2n \to \infty$ limit. 

\begin{figure}\label{hybrid.pin.fig}
\includegraphics[width=6in]{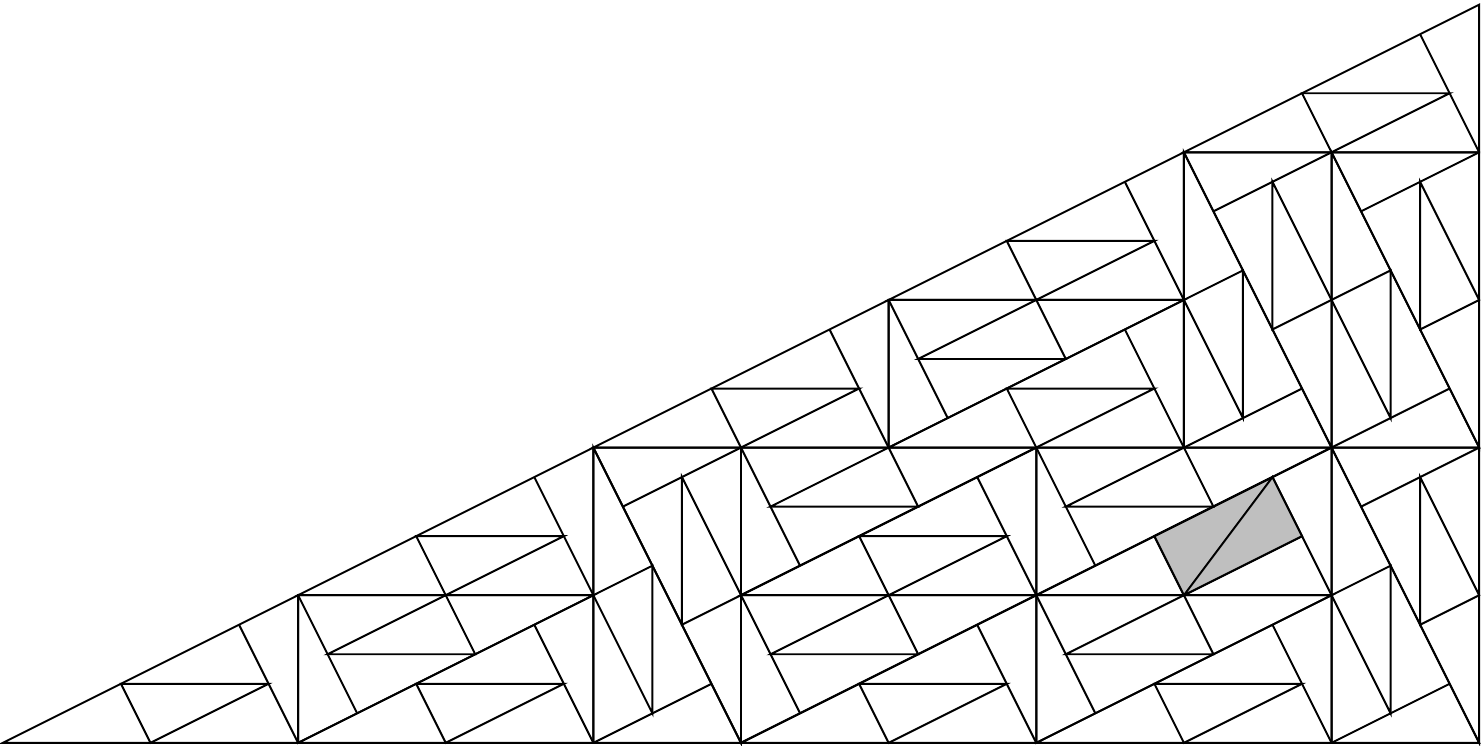}
\caption{A right-handed 2-supertile expressed as a union of 125 1-supertiles. 
Only two of the 1-supertiles (shaded) are right-handed.}
\end{figure}

For each $(H,\theta)$, there is a supertile generated measure coming
from the sequence of $N=2n$-supertiles of type $(H,\theta)$. Two such
measures, one with $(H,\theta)$ and one with $(H',\theta')$, agree
only if $H'=H$ and if $\theta-\theta'$ is a multiple of $\pi/2$. It is
not hard to see that these measures are extreme points of
$\poly_\infty$, insofar as they maximize the frequency of $(H,\theta)$
tiles, and hence are ergodic.

For these ergodic measures, $\rho_n$ is atomic. If $n$ is even, then
there is a large frequency of $(H,\theta+ m\pi/2)$ $n$-supertiles, a smaller 
frequency of $(H,\theta \pm 2\alpha + m \pi/2)$, or of supertiles of
the opposite handedness, a still smaller frequency with angle 
$\theta \pm 4\alpha \pm m\pi/2$, etc. There are countably many 
possibilities, but they occur with rapidly decreasing frequency. 

However, the fusion rule is primitive, and hence the tiling space
is minimal. 
Each $N$-supertile contains both
right and left-handed $n$-supertiles, each in $4(N-n-1)$ different 
directions. As $N \to \infty$, these directions become dense in $S^1$.
For any $n$-supertile $P$ and any $\epsilon>0$, one can therefore find 
an $N$ such that every $N$-supertile contains an $n$-supertile with
handedness and direction within $\epsilon$ of $P$. 

This hybrid, like the pinwheel and (complete) anti-pinwheel spaces, has
complexity that goes as $L^3/\epsilon^3$. As before, the number of 
possible directions of a supertile 
goes as $L/\epsilon$, while the number of locations
for the origin goes as $L^2/\epsilon^2$. 

\subsubsection{A hybrid with totally disconnected transversal and 
strongly expansive dynamics} \label{counter-ex}

Since the fusion rule for the hybrid pinwheel-anti-pinwheel 
is rotationally invariant, there is an $S^1$ action
on the transversal. In particular, the transversal is not disconnected. 
By modifying the construction to break this rotational invariance, 
we can get a tiling space that 
is minimal, has totally disconnected transversal, and is strongly expansive. 

Let $C$ be a Cantor set obtained by disconnecting the circle at the
countable set of points $n\alpha + m \pi/2 \pmod{2\pi}$, where $n \in
\Z$ and $\alpha = \arctan(1/2)$. That is, we remove each point $x$ of
the form $n \alpha + m \pi/2$ from the circle and replace it with 2
points, $x^+$ considered as the limit from above and $x^-$ considered
as a limit from below.  For definiteness, pick a metric on $C$.  There
is an obvious continuous map from $C$ to $S^1$, and we call elements
of $C$ ``angles'' and denote them $\theta$, with the understanding
that some angles require a superscript. Note that addition of $\alpha$
is well-defined on $C$, sending $\theta^\pm$ to $(\theta+\alpha)^\pm$.  
Likewise, subtraction of $\alpha$ and addition of multiples of
$\pi/2$ are well-defined.

Let $\ppp_n = \Z_2 \times C$, with the $n$-supertiles
having the same supports as with the hybrid pinwheel. To the two
points in $C$ with angle $\theta= k\alpha + m\pi/2$ we associate two
different (super)tiles. These (super)tiles
will have the same support but have different labels and may have different 
decompositions into lower-level supertiles. 

For each $n>1$, consider a partition of $C$ into $5^{2n-3}$ clopen sets,
such that the diameter of these sets (in the metric on $C$) goes to
zero uniformly as $n \to \infty$. To each of these clopen sets, associate
a triangle in the anti-pinwheel decomposition of the $n$-supertile into
$5^{2n-2}$ smaller triangles.

To decompose an $n$-supertile of type $(H,\theta)$ into
$(n-1)$-supertiles, first divide it into $5^{2n-2}$ smaller triangles
by applying the anti-pinwheel decomposition $2n-2$ times.  Then pick
the triangle associated to $\theta$, and decompose it into 5 triangles
using the pinwheel rule. Decompose the other $5^{2n-2}-1$ triangles
using the anti-pinwheel rule.

As a result of the $\theta$-dependence, this fusion rule breaks 
rotational symmetry and the rotation group
does not act on $\Omega$. For any distinct $\theta_{1,2} \in C$, 
there exists an $n$ such that $\theta_1$ and $\theta_2$ are not in the
same division of $C$ into $5^{2n-3}$ clopen sets. This implies that an
$n$-supertile of type $(H, \theta_2)$ is combinatorially different from
an $n$-supertile of type $(H,\theta_1)$.

This makes the dynamics strongly expansive and makes the transversal totally
disconnected.  Two points $\T_1$ and $\T_2$ in the same transversal
must have supertiles at some level that are different.  This means
that the combinatorics of the two tilings are different, so it is possible 
to find a clopen subset of the
transversal, defined using combinatorial data, that contains $\T_1$
but not $\T_2$. Furthermore, there is
a translation $x$ such that $\T_1$ and $\T_2$ differ by more than
$\epsilon$, which establishes expansivity. To see that the tiling space
is {\em strongly} expansive, note that the combinatorics of $\T_1$ and 
$\T_2$ are the same out to some distance and then are suddenly different; 
a time change could not account for the difference while preserving the 
match out to that point. 

This example is minimal but not uniquely ergodic, for the exact
same reasons that the hybrid pinwheel is minimal but not uniquely
ergodic. The ergodic measures are obtained from sequences of
$2n$-supertiles with fixed $(H,\theta)$, and the complexity goes as
$L^3/\epsilon^3$. The existence of uncountably many ergodic measures 
is evidence that this tiling space is not homeomorphic to an FLC tiling 
space.

\subsection{A direct product variation with shears} 

We return to the tiling of Example \ref{Non-Pisot_DPV} as an example of 
shear ILC.  Each label set
$\lcal_n$ consists of just four points, and the transition matrices
$M_{n,n+1}$ all equal $\left[\begin{matrix} 1 & 1 & 1 & 1\\ 3 & 0 & 3
    & 0\\3 & 3 & 0 & 0\\9 & 0 & 0 & 0 \end{matrix} \right]$. This
means that the transition-consistency requirements on the measures
$\rho_n$ are exactly the same as for an FLC substitution with the same
matrix. Each $\rho_n$ must be proportional to the Perron-Frobenius
eigenvector, $\left (\begin{smallmatrix} \lambda \cr 3 \cr 3 \cr -3
    \bar \lambda \end{smallmatrix} \right )$, where $\lambda =
(1+\sqrt{13})/2$ and $\bar \lambda = (1-\sqrt{13})/2$.  The measures
of all literally admitted patches can then be recovered from equation
(\ref{measure_from_frequency}), while the patches that appear in the
limit have total frequency zero.

The measures $\rho_n$ are independent of the side length $\alpha$
(except for overall normalization constants).   Perron-Frobenius theory
tells us that each $\Delta_n$ consists of a single element,
so by Theorem \ref{unique.ergodicity} this DPV must
be uniquely ergodic.

However, the set of
possible patches, and the count $\#(I \hbox{ in } P)$ on the right
hand side of equation (\ref{measure_from_frequency}) very much do
depend on $\alpha$.  
When $\alpha$ is rational, the tiling has FLC since if $\alpha =
p/q$ then the offsets between neighboring tiles must be multiples of
$1/q$.  In fact, all such multiples occur. Moreover, the offsets
between neighboring $n$-supertiles is also all multiples of $1/q$, up
to the size of the supertiles in question. Despite being FLC, the
complexity goes as $qL^3/\epsilon^2$, 
since the number of ways that two supertiles
of size $L$ can meet is $qL$. As long as $L > \epsilon^{-1}>q$,
$C(\epsilon,L)$ goes as $qL^3/\epsilon^2$.  

When $\alpha$ is irrational, then the tiling has ILC \cite{Me.Robbie},
and the offsets between neighboring supertiles is arbitrary.  A
countable and dense set of offsets is literally admitted, a
continuum of offsets appears in the limit, and the transversal
has topological dimension 1. To distinguish patches to
within $\epsilon$, we must specify the offset to within $\epsilon$,
and there are $L/\epsilon$ ways to do so.  $C(\epsilon,L)$ then goes
as $L^3/\epsilon^3$.  (This argument also applies to rational $\alpha$ when
$\epsilon > 1/q$. The distinction between rational and irrational $\alpha$
disappears when looking at structures of size greater than $1/q$.)

\subsection{A tiling of $\R$ with variable size tiles}

We next consider a 1-dimensional tiling whose tiles appear in a
continuum of sizes.  The possible lengths of $n$-supertiles are
$[{(3/2)^n}, {3(3/2)^n}]$, and the fusion rule making an $n$-supertile
depends on its length $x$.  If $x$ is above a certain threshold
(namely $2(3/2)^n$), then it is composed of two $(n-1)$ supertiles,
one of length $x/3$ and the other of length $2x/3$.  If $x$ is below
the threshold the $n$-supertile is composed of one $(n-1)$ supertile,
whose length is $x$, which we have ensured is an allowable $(n-1)$-supertile length.\footnote{Readers familiar with
  inflate-and-subdivide rules can think about this as an inflation by
  a factor of $3/2$ followed by a subdivision only if the tile length
  is above the threshold.} 
  
  We would like to allow the label sets of
$n$-supertiles to be their possible lengths, but doing so would make the fusion rule 
discontinuous.
To force continuity, we 
disconnect the line at each length $x$ of discontinuity,  making two
new points we call $x^+$ and $x^-$.  The discontinuities happen for
$x = 3^k(3/2)^m$, where $k$ and $m$ are nonnegative integers; we call
such values of $x$  {\em special}.  If $x$ is special, then for 
small $\epsilon > 0$
we say that $x-\epsilon$ is close to $x^-$ but not $x^+$, while $x + \epsilon$
is close to $x^+$ but not $x^-$.
Our label sets are intervals $\lcal_n = [{(3/2)^n}^+, {3(3/2)^n}^-]$ in
the disconnected line, each of which breaks up into $O(n)$ ordinary
closed intervals. (E.g., $[1^+,3^-] = [1^+,(3/2)^-] \cup [(3/2)^+,
(9/4)^-] \cup [(9/4)^+, 3^-]$.)  These labels describe the lengths of
the $n$-supertiles. 1-supertiles of label $3^+$ and $3^-$ each have
length 3, but have different decompositions into ordinary tiles.

To extend the fusion rule described above to our newly labeled
supertile sets, note that if $x$ is special, then either $x/3$ or
$2x/3$ is special (or both). In this case we use the
$(n-1)$-supertiles with the same superscript as $x$ itself.  Since
$2(3/2)^n = 3(3/2)^{n-1}$ is special, we have eliminated the
discontinuity in the rule.

The fusion rule is van Hove, recognizable, and primitive.    Thus the 
tiling dynamical
system is minimal and we may use Theorem \ref{measures_are_sequences} to 
determine the invariant measures.  In the next few sections we will show 
that the system is uniquely ergodic and compute the invariant measure.

\subsubsection{Transition-consistency}

\begin{lem} Every transition-consistent and volume-normalized 
family of measures $\{\rho_n\}$ is non-atomic.
\end{lem}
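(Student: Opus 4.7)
My plan is to argue by contradiction. Assume that $\rho_n(\{P\}) = w > 0$ for some $P \in \ppp_n$ of length $x$. I would propagate this atomic mass upward through the hierarchy using transition consistency, bound the resulting ancestor atoms using volume normalization, and then exploit the specific structure of this fusion rule to reach a contradiction.

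First, I would apply the transition-consistency identity $\rho_n = M_{n,m}\rho_m$ for each $m > n$. Since $\#(P \text{ in } Q)$ is supported on the countable set $A_m$ of $m$-supertiles whose decomposition contains a copy of $P$, this gives
\[
w = \int_{\ppp_m}\#(P \text{ in } Q)\,d\rho_m(Q) = \sum_{R \in A_m}\rho_m(\{R\})\cdot\#(P \text{ in } R).
\]
Every term on the right is nonnegative, and $\#(P \text{ in } R) \ge 1$ for each $R \in A_m$, so the total atomic mass $\sigma_m := \sum_{R\in A_m}\rho_m(\{R\})$ satisfies $\sigma_m \le w$. Next I would invoke volume normalization: every $m$-supertile has $Vol(R) \ge (3/2)^m$ and $\int Vol\,d\rho_m = 1$, so
\[
(3/2)^m\, \sigma_m \;\le\; \sum_{R\in A_m} \rho_m(\{R\})\,Vol(R) \;\le\; 1,
\]
forcing $\sigma_m \le (2/3)^m \to 0$ as $m\to\infty$.

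The crux will be to control the multiplicities $\#(P \text{ in } R)$ sharply enough that the total $w = \sum_R \rho_m(\{R\})\,\#(P \text{ in } R)$ can be forced to vanish. I would exploit the rigid descent structure imposed by the fusion rule: whenever an above-threshold supertile of length $y$ at level $m$ splits into its short child $y/3$ and its long child $2y/3$, the short child is automatically below threshold at level $m-1$ (and hence passes through), while the long child again splits. Valid descent paths from $R$ down to a length-$x$ $n$-sub-supertile therefore populate a severely constrained Fibonacci-like tree whose leaf multiplicities I would count. The main obstacle is that a naive count of these paths grows at rate $\phi = (1+\sqrt{5})/2$, and since $\phi > 3/2$ the direct product $\phi^{m-n}(2/3)^m$ does not vanish; one must actually use the level-by-level threshold constraints, which forbid most combinatorial arrangements, to upgrade the count to a sub-$(3/2)^{m-n}$ bound. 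As a fallback, one can postpone this technicality and use the unique ergodicity and supertile-generation established in the following subsections: every ergodic invariant measure here is supertile-generated, so $\rho_n(\{P\}) = \lim_{N\to\infty} \#(P \text{ in } Q_N)/Vol(Q_N)$ along a sequence with $Vol(Q_N)\to\infty$, and a direct asymptotic analysis of the length-$x$ $n$-sub-supertiles inside $Q_N$ then shows this limit is $0$, yielding $w = 0$ and contradicting the supposed atom.
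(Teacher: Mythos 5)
Your reduction to controlling the multiplicities $\#(P\hbox{ in }R)$ is set up correctly (the passage from the integral to a countable sum over the set $A_m$ of ancestors is legitimate, and volume normalization does force the total atomic mass on $A_m$ down to at most $(2/3)^m$), but the step you flag as "the crux" is in fact the entire content of the lemma, and you leave it unproven. The paper closes exactly this gap, and it does so by a different bookkeeping than the one you sketch: instead of counting descent paths against the crude bound $(3/2)^{m-n}$ on the total number of descendants, it tracks the \emph{fraction of length} of $R$ occupied by descendants of length exactly $x$. If $R$ has length $y$ and contains a copy of $P$ of length $x$, then $y = x\,3^k(3/2)^m$ for nonnegative integers $k,m$ with $k+m$ comparable to the level difference; a descendant of length $x$ arises from a descent path taking the small daughter (length factor $1/3$) exactly $k$ times and the large daughter (factor $2/3$) exactly $m$ times, pass-through steps being irrelevant. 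There are at most ${k+m \choose m}$ such paths and each carries fraction $(1/3)^k(2/3)^m$ of the length of $R$, so the total fraction is at most the binomial probability ${k+m \choose m}(1/3)^k(2/3)^m \le C(k+m)^{-1/2}$ by the local central limit theorem. This gives $\#(P\hbox{ in }R)\le C N^{-1/2}\,Vol(R)/x$, and integrating against $\rho_N$ and using volume normalization yields $\rho_n(\{P\})\le C N^{-1/2}/x \to 0$. No "threshold constraints forbidding arrangements" are needed; your Fibonacci-tree count with growth rate $\phi$ is the wrong quantity to bound, which is why your product $\phi^{m-n}(2/3)^m$ fails to vanish.

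Your fallback is not available either: it is circular within the paper's logic, since non-atomicity is the lemma that permits writing $d\rho_n = f_n(x)\,dx$ and setting up the transfer operator $\ren$, whose spectral analysis is what \emph{establishes} unique ergodicity in the following subsection. Moreover, the claim that every ergodic measure is supertile generated is not a theorem anywhere in the paper (the authors only remark that all ergodic measures known to them have this property), and even granting it, the "direct asymptotic analysis of the length-$x$ $n$-sub-supertiles inside $Q_N$" that you defer to is precisely the binomial estimate above --- the same missing step in different clothing.
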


\begin{proof}
Consider an $n$-supertile of size $x$. We will show that, for $N$
large, dividing an $N$-supertile of size $y$ into $n$-supertiles
results in relatively few $n$-supertiles of label exactly $x$. Specifically,
we will argue that 
the fraction of the $n$-supertile descendants that are of size $x$ is
bounded above by a constant times $N^{-1/2}$, implying that no
transition-consistent measure can give weight greater than $cN^{-1/2}$
to $n$-supertiles of label $x$. Since $N$ can be chosen arbitrarily
large, the measure of the $n$-supertiles of label $x$ is zero.

If some of the descendants of $y$ have length $x$, 
then $y = x 3^k
(3/2)^m$ for some non-negative integers $k$ and $m$. Since $y/x$ is also 
approximately $(3/2)^{N-n}$, $N$ is bounded above and below by a constant times
$k+m$.  When
dividing $y$, 2/3 of the length winds up in a chunk of size $2y/3$ and
1/3 winds up in a chunk of size $y/3$. Dividing $k+m$ times, we get
size $x$ only if we go to the larger daughter $m$ times and the
smaller daughter $k$ times. There are at most ${k+m \choose m}$ ways
to do this, 
so the fraction of the length of $y$
consisting of $n$-supertiles of type $x$ is at most $\left(\frac{1}{3}
\right )^k \left ( \frac{2}{3}\right)^m {k+m \choose m}$. This is the
probability of getting $k$ heads and $m$ tails when flipping a biased coin
$k+m$ times. Since the variance in the number of heads is proportional to
the number of flips, and since the distribution is approximately normal
(by the central limit theorem), the probability of getting exactly $k$
heads is bounded by a constant times 
$(k+m)^{-1/2}$, and hence by a constant times $N^{-1/2}$.
\end{proof}

Since $\rho_n$ is non-atomic, we can treat $\rho_n$ as a continuous 
measure on the (ordinary) interval $[(3/2)^n, 3(3/2)^n]$. 
We write 
$$d \rho_n = f_n(x) dx,$$
to represent the frequency of $n$-supertiles whose lengths are between
$x$ and $x+dx$.  In principle, $\rho_n$ could have a singular
component, in which case $f_n$ should be understood as a distribution
rather than as an ordinary function.

Transition-consistency for the $\rho_n$s requires the $f_n$s to be
related, which is key to computing the ergodic measure.  By
definition, for any measurable $I \subset \ppp_n$ we need
$$\int_I f_n(x) dx = \int_{y \in \ppp_{n+1}} M_{n, n+1}(I, y) f_{n+1}(y) dy.$$   
To compute $M_{n,n+1}$ we identify three subsets of $\ppp_n$.  If $x
\in [{(3/2)^n}^+, {(3/2)^{n+1}}^-]$, then the $n$-supertile of label
$x$ must be the small daughter of an $(n+1)$-supertile of label $3x$.
An $n$-supertile with $x \in [{(3/2)^{n+1}}^+, {2(3/2)^n}^-]$ must be
the only child of an $(n+1)$ supertile of label $x$, and an
$n$-supertile of length $x \in [{2(3/2)^n}^+, {3(3/2)^n}^-]$ can
either be the only child of an $(n+1)$-supertile of label $x$ or the
large daughter of an $(n+1)$-supertile of label $3x/2$.  Thus, if $I
\subset [{(3/2)^n}^+, {(3/2)^{n+1}}^-]$ we see that $M_{n,n+1}(I,y) =
\chi_{3I}(y)$ and we have $$\int_I f_n(x)
dx 
= \int_{3I} f_{n+1}(y) dy = \int_I 3f_{n+1}(3x) dx,$$ implying that on
this interval we have $f_n(x) = 3f_{n+1}(3x)$.  The computation for
the other two intervals is similar and we obtain
$$f_n(x) = \begin{cases} 3 f_{n+1}(3x) & (3/2)^n \le x \le  {(3/2)^{n+1}}  \cr
f_{n+1}(x) & {(3/2)^{n+1}}  < x \le {2(3/2)^n} \cr
f_{n+1}(x) + (3/2)f_{n+1}(3x/2) & {2(3/2)^n < x \le 3(3/2)^n.}
\end{cases}
$$ 

Because the transition-consistency relationship does not depend on
$n$, we can renormalize all of the $f_n$'s and induce a map from
$L^2([1,3])$ to itself.  Define
$$ \tilde f_n(x) = (3/2)^{2n} f_n((3/2)^n x).$$ 
The quantity $\tilde f_n(x) dx$ represents $(3/2)^n$ times the
frequency of $n$-supertiles of length between $(3/2)^nx$ and $(3/2)^n
(x+dx)$, and $x$ ranges from 1 to 3. Equivalently, if we were to
rescale all lengths by $(3/2)^n$, it would represent the frequency
(number per {\em rescaled} length) of $n$-supertiles of length between
$x$ and $x+dx$.

The transition consistency equations become
$$\tilde f_n(x) = \begin{cases} (4/3) \tilde f_{n+1}(2x) & 1 \le x \le 3/2 \cr
(4/9) \tilde f_{n+1}(2x/3) & 3/2 < x \le 2 \cr
(4/9) \tilde f_{n+1}(2x/3) + (2/3) \tilde f_{n+1}(x) & 2< x \le 3, \end{cases}$$
or equivalently $\tilde f_n(x) = \ren \tilde f_{n+1}(x)$, where
$$\ren \tilde f (x) = \begin{cases} (4/3) \tilde f (2x) & 1 \le x \le 3/2 \cr
  (4/9) \tilde f(2x/3) & 3/2 < x \le 2 \cr (4/9) \tilde f(2x/3) +
  (2/3) \tilde f (x) & 2 < x \le 3. \end{cases}$$ In the next section
we see that $\ren$ is a diagonalizable operator on $L^2([1,3])$ and
compute its spectrum in order to show that there is a unique invariant
measure.
 
\subsubsection{The invariant measure}

\begin{thm} The tiling space is uniquely ergodic, and the unique measure has
\begin{equation}\label{answer}
\tilde f_n(x) = \begin{cases} \frac{c}{x^2} & x  \le 2 \cr \cr \frac{3c}{x^2} & x>2, 
\end{cases} \end{equation}
for every $n$, where $c = (3 \ln(3)- 2 \ln(2))^{-1}$.  
\end{thm}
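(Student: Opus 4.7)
By Theorem~\ref{measures_are_sequences} combined with the preceding lemma, every invariant probability measure on $\Omega_\rrr$ corresponds to a volume-normalized, transition-consistent sequence $\{\rho_n\}$ of non-atomic (possibly singular-continuous) measures on $\ppp_n \cong [(3/2)^n,3(3/2)^n]$. After the rescaling $\tilde f_n(x)=(3/2)^{2n}f_n((3/2)^n x)$, these become non-negative distributions on $[1,3]$ satisfying $\tilde f_n=\ren\tilde f_{n+1}$ together with the normalization $\int_1^3 x\,\tilde f_n(x)\,dx=1$. The plan is: first verify the proposed formula defines a fixed point of $\ren$; next pin down $c$ by the normalization; finally establish uniqueness via a Perron-Frobenius/Birkhoff argument for $\ren$ on the cone of non-negative measures.

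For step one I would check $\ren\tilde f^{\,*}=\tilde f^{\,*}$ case by case. For $x\in[1,3/2]$, $2x\in[2,3]$ gives $\tilde f^{\,*}(2x)=3c/(4x^2)$ and $\ren\tilde f^{\,*}(x)=(4/3)\cdot 3c/(4x^2)=c/x^2$. For $x\in(3/2,2]$, $2x/3\in(1,4/3]$ gives $\tilde f^{\,*}(2x/3)=9c/(4x^2)$ and $\ren\tilde f^{\,*}(x)=(4/9)\cdot 9c/(4x^2)=c/x^2$. For $x\in(2,3]$, $2x/3\in(4/3,2]$ and $\tilde f^{\,*}(x)=3c/x^2$, giving $\ren\tilde f^{\,*}(x)=(4/9)\cdot 9c/(4x^2)+(2/3)\cdot 3c/x^2=3c/x^2$.

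For step two, the volume-normalization becomes
$$ 1=\int_1^3 x\,\tilde f^{\,*}(x)\,dx=c\int_1^2\frac{dx}{x}+3c\int_2^3\frac{dx}{x}=c\ln 2+3c(\ln 3-\ln 2)=c(3\ln 3-2\ln 2), $$
so $c=(3\ln 3-2\ln 2)^{-1}$, in agreement with the statement. This identity also confirms that $\tilde f^{\,*}$ is integrable against $x$, so it does correspond to a well-defined volume-normalized measure family.

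For step three, let $\{\tilde g_n\}$ be any admissible sequence, so $\tilde g_n=\ren^{N-n}\tilde g_N$ for every $N>n$. I would show that $\ren$ is a bounded positive operator on the convex set of non-negative Borel measures of volume one on $[1,3]$, with $\tilde f^{\,*}$ as its unique fixed point and all other spectrum strictly inside the unit disk. The key technical point is primitivity: tracing through the three branches in the definition of $\ren$, values of $\ren\tilde f$ on $[1,3/2]$, $(3/2,2]$ and $(2,3]$ pull back from $[2,3]$, $[1,4/3]$ and $[4/3,3]$ respectively, and two applications already transfer mass from any subinterval of $[1,3]$ to every subinterval, yielding a Doeblin-type lower bound $\ren^2\mu\ge \kappa\,\tilde f^{\,*}dx$ for some $\kappa>0$ uniformly in $\mu$ of volume one. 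Birkhoff's theorem in Hilbert's projective metric on the positive cone then gives that some iterate of $\ren$ is a strict contraction, so $\tilde g_N$ is driven projectively to $\tilde f^{\,*}$ as $N\to\infty$, forcing $\tilde g_n=\tilde f^{\,*}$ for all $n$ and hence unique ergodicity.

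The main obstacle is the primitivity/contraction argument in step three; steps one and two are routine substitution. In particular one must verify the Doeblin-type spreading condition carefully enough that Birkhoff's theorem rules out spurious singular-continuous fixed points of $\ren$ distinct from the absolutely continuous solution $\tilde f^{\,*}$, since the preceding lemma only rules out atomic ones.
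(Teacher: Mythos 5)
Your steps one and two (verifying that the proposed density is fixed by $\ren$ and computing $c$ from the volume normalization) are exactly what the paper does, and your computations are correct. The gap is in step three, which you correctly identify as the crux. The claimed minorization $\ren^2\mu \ge \kappa\,\tilde f^{\,*}\,dx$ is false: viewed as a map on measures, $\ren$ sends a point mass at $u\in[1,3]$ to a point mass at $3u/2$ (if $u\le 2$) or to a sum of point masses at $u/2$ and $u$ (if $u>2$). This is a deterministic branching of points with no smoothing, so every iterate $\ren^k\mu$ of a singular measure remains singular (supported on finitely many affine images of the support of $\mu$) and can never dominate a positive multiple of Lebesgue measure. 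The ``primitivity'' you trace through is only primitivity of the interval-dependency graph, not a Doeblin condition; without the minorization, the image of the positive cone under $\ren^k$ has infinite diameter in Hilbert's projective metric for every $k$, Birkhoff's theorem does not apply, and your own worry about singular-continuous fixed points is left unresolved.

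The mechanism that actually closes the argument, and the one the paper uses, is the irrational rotation hiding inside $\ren$: iterating the eigenvalue equation $\ren\tilde f=\lambda\tilde f$ advances $\ln x$ by $\ln 2 \pmod{\ln 3}$, so the value of $\tilde f$ at a single point determines its values on a dense, equidistributed orbit. This gives at most one eigenfunction per $\lambda$, and equidistribution of the rotation converts the requirement that the resulting multiplicative cocycle neither grow nor decay into the constraint $\ln(2)\ln|\lambda|+\ln(3/2)\ln|3\lambda-2|=0$, whose only solution with $|\lambda|\ge 1$ is $\lambda=1$. If you wish to keep a cone-contraction flavor, you would need to work in a topology adapted to weak-$*$ convergence and prove equidistribution of the pushed-forward atoms, which amounts to the same rotation argument.
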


\begin{proof} The proof follows the same general lines as the proof
of Theorem 8 of \cite{Sadun.pin}. 
To prove unique ergodicity, we show that the spectrum 
of $\ren$ is entirely in the unit disk, that 1 is the only eigenvalue on the
unit circle, and that there is a unique solution to $\ren \tilde f=\tilde f$, up to normalization,
namely the function listed above.

Suppose that $\tilde f$ is an eigenfunction of $\ren$ with eigenvalue $\lambda$. 
The equations $\ren \tilde f = \lambda \tilde f$ reduce to:
\begin{eqnarray*} &\tilde f(2x)/\tilde f(x) =3 \lambda / 4 & \qquad 1 \le x \le 3/2 \cr
&\tilde f(2x/3)/\tilde f(x) = 9 \lambda/4  & \qquad 3/2 < x \le 2 \cr
&\tilde f(2x/3)/\tilde f(x) =3(3\lambda-2)/4 & \qquad  2 < x \le 3
\end{eqnarray*}

Note that $\tilde f(x)$ determines $\tilde f(2x)$ or $\tilde f(2x/3)$, which determines
$\tilde f(4x/3)$ or $\tilde f(4x/9)$, which determines the next power of 2 times $x$,
divided by an appropriate power of 3.  At every stage, we add
$\ln(2)$ to $\ln(x)
\pmod{\ln(3)}$.  Since $\tilde f(x)$
determines a dense set of function values, there is at most one
function (up to scale) with eigenvalue
$\lambda$.  Irrational rotations yield a uniform measure on the 
circle, so we find
ourselves in the range [1,3/2] a fraction $\ln(3/2)/\ln(3)$ of the time,
in the range [3/2, 2] a fraction $\ln(4/3)/\ln(3)$ of the time and in the range [2,3] a fraction 
$\ln(3/2)/\ln(3)$ of the
time, regardless of the initial value of $x$. 
This means that we multiply by $3 \lambda/4$ an average of 
$\ln(3/2)$ times for 
every $\ln(4/3)$ times that we multiply by $9\lambda/4$ and for every
$\ln(3/2)$ times that we multiply by $3(3\lambda-2)/4$. Since the values of $\tilde f$ do
not grow or shrink in the long run, we must have
\begin{equation} \label{eig}
\left | \frac{3 \lambda}{4}\right|^{\ln(3/2)} \left | \frac{9 \lambda}{4}\right|^{\ln(4/3)} \left | 
\frac{3(3 \lambda-2)}{4}\right |^{\ln(3/2)} = 1,
\end{equation}
or equivalently 
\begin{equation}\label{alsoeig} \ln(2)\ln|\lambda| + \ln(3/2) \ln|3\lambda-2| = 0.
\end{equation}
If $|\lambda|>1$, then both terms on the left hand side of (\ref{alsoeig}) are positive. 
If $|\lambda|=1$ and $\lambda \ne 1$, the first term is zero and the second term is 
positive. Thus, the only possible eigenvalues are +1 and points
that are strictly inside the unit circle. 

It is straightforward to check that the function of equation (\ref{answer})
is in fact an eigenfunction of $\ren$ with eigenvalue 1.  The constant $c$ comes from
the volume normalization condition $\int_1^3 x\tilde f_n(x) dx = 1$. \end{proof}

Knowing the $\lambda=1$ eigenfunction is enough to determine the invariant measure. However, it is also
useful to identify the other eigenvalues and eigenfunctions, as these describe how averages over finite 
regions can differ from the the ergodic limit.   
If $\gamma$ is a complex number such that 
\begin{equation} \label{eig2} 
3^\gamma = 2^\gamma + 1,
\end{equation} 
then the function:
$$\tilde f(x) = \begin{cases} x^{-(\gamma+1)} &1 \le  x<2 \cr 3^\gamma x^{-(\gamma+1)} 
  & 2 < x \le 3 \end{cases}$$ 
is an eigenfunction of $\ren$ with eigenvalue
$\lambda = (3/2)^{\gamma-1}$.
There are countably many solutions to (\ref{eig2}), each corresponding 
to a solution to (\ref{eig}). (Condition (\ref{eig}) is necessary but 
not sufficient for $\lambda$ to be an eigenvalue.) 1 is an accumulation
point for the spectrum of $\ren$, so convergence under $\ren$ 
to the unique invariant measure is {\em not} exponential. 

\subsubsection{Complexity and transversal}

The complexity $C(\epsilon,L)$ goes as $L^2/\epsilon^2$.  To specify a
patch of size $L$ up to $\epsilon$ error, one must identify a
supertile of size bigger than $L$, specify the length of the supertile
to within $\epsilon$, and pick a location within that supertile to
within $\epsilon$, leaving $L^2/\epsilon^2$ possibilities.

Despite having a continuum of tile lengths, this tiling space has totally
disconnected transversal. The clopen sets describe the combinatorics of how
tiles fit into supertiles of various orders out to a certain distance from
the origin. To see this, imagine that tilings $T$ and $T'$ are in the
transversal and are close. This occurs when $T$ is, to a large distance,
a small dilation of $T'$ (or vice-versa). However, eventually $T'$ will
have a tile whose length is slightly less than $3$, while the corresponding
region of $T$, having length slightly greater than 3, consists of two tiles. 
Using this difference in combinatorics, we can construct a clopen set
that contains $T$ but not $T'$. 

\end{appendix}

\end{document}